\documentclass[11pt]{article}

\usepackage[a4paper,margin=1in]{geometry}
\usepackage[T1]{fontenc}
\usepackage[utf8]{inputenc}
\usepackage{lmodern}
\usepackage{microtype}

\usepackage{amsmath,amssymb,amsthm,mathtools}
\usepackage{mathrsfs}
\usepackage{bm}
\usepackage{enumitem}
\usepackage{xcolor}
\usepackage{hyperref}
\usepackage[nameinlink,capitalize]{cleveref}
\usepackage{tikz-cd}
\usepackage{graphicx}
\usepackage{subcaption}
\usepackage{float}

\hypersetup{
    colorlinks=true,
    linkcolor=blue!60!black,
    citecolor=blue!60!black,
    urlcolor=blue!60!black,
    pdftitle={Working Title},
    pdfauthor={Your Name},
    pdfsubject={Probability Theory},
    pdfkeywords={probability, stochastic processes, random variables}
}

\theoremstyle{plain}
\newtheorem{theorem}{Theorem}[section]
\newtheorem{proposition}[theorem]{Proposition}
\newtheorem{lemma}[theorem]{Lemma}

\newtheorem{conjecture}[theorem]{Conjecture}

\theoremstyle{definition}
\newtheorem{definition}[theorem]{Definition}

\newtheorem{remark}[theorem]{Remark}

\newcommand{\E}{\mathbb{E}}

\allowdisplaybreaks

\title{Interference Queueing Networks: A Replica Mean-Field Approach in the Symmetric Setting}
\author{
    Philippe Sarotte\thanks{INRIA, Paris}, 
    Nahuel Soprano-Loto\footnotemark[1], 
    François Baccelli\footnotemark[1]
}
\date{\today}

\begin{document}

\maketitle

\begin{abstract}
We propose a model for evaluating the performance of wireless communication networks beyond the ubiquitous full-buffer assumption, under which every transmitter is always active. The network is represented by $N$  interacting queues arranged on a torus, with homogeneous arrival rate $\lambda$  and service rates depending on the activity of neighboring interferers. More precisely, each queue is associated with a transmitter–receiver pair, and its service rate is given by the Shannon capacity, which depends on the corresponding Signal-to-Interference-plus-Noise Ratio (SINR). Since interfering transmitters only emit when their queue is non-empty, the SINR -- and hence the service rate -- improves when neighboring queues are empty. We first derive the stability region of the system. To investigate the stationary regime, we introduce two mean-field approximations. The first one is obtained from a finite \(K\)-replica system, for which we prove propagation of chaos as \(K \to \infty\). The second one describes a queueing system evolving in a suitably chosen autonomous environment. We derive quantitative estimates comparing these two approximations. Finally, we prove that the original interacting queueing system converges exponentially fast to stationarity. 
\end{abstract}

\noindent\textbf{Index Terms—}Spectrum sharing, queueing networks, wireless interference, mean-field approximation, McKean--Vlasov dynamics, performance analysis, coupling

\tableofcontents

\bigskip

\section{Introduction}

\subsection{Motivation}

The goal of this paper is to introduce a tractable model for analysing the performance of wireless networks operating over a shared bandwidth beyond the classical full-buffer assumption. Rather than assuming that all transmitters are perpetually backlogged, we consider a system with bursty traffic, where packets arrive dynamically and are stored in queues before transmission. The resulting interaction between traffic dynamics and interference leads to a coupled queueing system whose analysis is challenging. Beyond its theoretical interest, this finer-grained analysis can provide useful guidance for network dimensioning and engineering. By explicitly accounting for the interaction between packet arrivals, queue occupancy, and interference, it becomes possible to assess more accurately the admissible load, the density of simultaneously active links, and the conditions under which full spectrum sharing remains viable. This, in turn, can inform the design of current wireless networks by indicating when mechanisms such as admission control, power control, or partial spectrum partitioning are needed to avoid persistent congestion.

We model the network as a collection of transmitter--receiver pairs sharing the same wireless medium. Each transmitter stores packets  arriving at a rate $\lambda$ in a queue and serves them over its associated wireless link. When several neighbouring transmitters are simultaneously active, they interfere with one another, so that the transmission rate of each active link depends on the current activity of the surrounding queues. Under adaptive coding and ideal Gaussian signalling, this dependence is captured through the Shannon capacity formula, which links the service rate of each queue to the instantaneous signal-to-interference-plus-noise ratio. The service process is therefore state-dependent, and all queues become coupled through interference.
Our aim in this paper is to study the fundamental queue--interference coupling within a mathematically tractable framework, at the expense of several simplifying assumptions.
These assumptions are admittedly restrictive, but they allow us to introduce the main phenomena and develop mathematical tools that can later be extended to more realistic settings. We now summarise them.

\begin{itemize}
    \item \textbf{One queue per wireless link.} Each queue corresponds to a transmitter--receiver pair, with each transmitter serving at most one receiver at a time.
    
    \item \textbf{Short links.} Receiver--transmitter distances are assumed small compared with inter-transmitter distances, i.e., $T \ll d(i,j)$. Hence, the interference at a receiver is approximated by that at its transmitter.
    
    \item \textbf{Homogeneous queues.} All queues are statistically identical, with common arrival rate, file-size distribution, transmit power, link distance, and fading statistics.
    
    \item \textbf{Interference-dependent service.} A queue service rate depends on the set of simultaneously active transmitters through interference, according to the Shannon capacity formula.
    
    \item \textbf{Full frequency reuse.} All links share the same frequency band, so interference is not mitigated by orthogonal resource allocation.
    
    \item \textbf{Symmetric geometry.} Transmitters are equally spaced on a torus, ensuring identical interference conditions for all nodes and eliminating edge effects. Periodic boundary conditions should be understood as a proxy for a finite window within a larger homogeneous network.
    
    \item \textbf{Static channel model.} We consider a quasi-static channel with constant noise over the time interval of interest. Under translation invariance, links with the same relative geometry have identical fading statistics, and instantaneous fading is replaced by its mean power in the SINR.
    
    \item \textbf{Ideal rate adaptation.} Rate adaptation is assumed instantaneous and overhead-free, so transmission always operates at the Shannon capacity corresponding to the current interference state.
\end{itemize}

Within this setting, several natural performance questions arise. Can the stability region of the system be characterised explicitly? When the system is stable, can one describe its stationary regime and derive performance metrics such as mean delay or congestion? Can the analysis be extended to random spatial configurations? Finally, what can be said about the long-time behaviour of the system and its convergence to equilibrium?

\subsection{Related Work}

Stochastic analysis has long been central to the performance evaluation of telecommunication networks, providing mathematically grounded approximations and scaling laws that help identify key design parameters without costly deployment campaigns or extensive measurements; see, e.g., \cite{10.1561/1300000026}.

Several works have relaxed the full-buffer assumption in wireless settings. In \cite{sankararaman2017spatial}, a device-to-device model is introduced in which files are transmitted at the Shannon rate in a bounded continuous space with randomly located transmitters. A related model on an infinite countable space with deterministic transmitter locations is studied in \cite{sankararaman2019interference}. For cellular downlink systems, \cite{alaammouri2020stability} considers the single-cell case, while \cite{kaj2023retransmission}, \cite{yang2018sir}, \cite{bonaldWirelessData}, and \cite{Blaszczyszyn2014} study multi-cell models with randomly located users and explicit multiple-access mechanisms.

A major challenge in the analysis of such systems is the presence of spatial and temporal correlations between particles, which typically make an exact treatment intractable. A natural approach is therefore to approximate the original dynamics by a mean-field limit; see, e.g., \cite{kaj2023retransmission,mitzenmacher2001power}. While mean-field limits provide a powerful analytical framework, proving the existence of the limiting model may itself be delicate, as the limits in time and in the number of particles do not necessarily commute. This phenomenon appears, for instance, in the Kuramoto model \cite{BertiniGiacominPakdaman2010Kuramoto} and in the Curie--Weiss model \cite{JournelBris2025CurieWeiss}. The long-time behaviour of either the finite system or its mean-field approximation has been investigated in various settings, such as \cite{Graham2000}, where the limiting dynamics are governed by an ODE, and \cite{Cormier2020long}, where the McKean--Vlasov nonlinearity enters through the drift.

\subsection{Contributions}

We now summarize the main contributions of the paper, their interpretation in plain terms, the main technical obstacles encountered in the analysis, and their relevance for performance evaluation.

As described in the motivation section, the system consists of interacting queues receiving jobs at rate $\lambda$ and transmitting them at rates that depend on the activity of the whole network. More congested systems have smaller transmission rates. This creates a natural competition between the arrival mechanism, which tends to increase the workload, and the service mechanism, which allows the system to empty itself.

Our first contribution is to make this intuition rigorous. We prove the existence of a critical threshold for the arrival rate. Below this threshold, the system is stable, in the sense that it returns to the empty state infinitely often. Above it, with probability $1$ the number of jobs diverges to infinity. This threshold has a natural interpretation: the arrival rate must be smaller than the worst-case transmission rate $\mu^*$, corresponding to the rate at which a queue transmits when all surrounding queues are active. We also realize that it means that taking into account the empty slots does not enlarge the stability region of the queuing system. 

To establish this result, we first provide a stochastic differential equation representation of the system, in a form reminiscent of interacting neuron models. This formulation makes it possible to construct explicit couplings by using the same Poisson random measures. In the regime $\lambda < \mu^*$, stability follows from stochastic domination by the full-buffer system, whose explosion region is known. The converse direction is more delicate, since a direct stochastic domination argument is no longer available. Instead of relying on Lyapunov methods or fluid-limit techniques at this stage, we use coupling arguments that exploit the fact that the true system behaves like the full-buffer system whenever no queue is empty.

We then turn to the stable regime and to the problem of estimating stationary performance metrics, such as delay, congestion, and busy probability. Obtaining closed-form expressions, or even sampling from the exact stationary distribution, is a notoriously difficult problem. In the present setting, the difficulty is closely related to the analysis of random walks in the positive orthant with boundary interactions. Existing results, in particular those of~\cite{fayolle2017random}, indicate that an exact description is in general out of reach. We therefore adopt an approximation approach.

The purpose of our approximations is to remove the main source of analytical difficulty, namely the randomness of the transmission rates induced by the current state of the other queues. Mean-field approximations achieve this by replacing this random quantity with a simpler averaged one. This reduction leads to tractable objects, essentially of $M/M/1$ type, for which stationary performance metrics can be computed. A further difficulty, however, is that the approximation should still preserve the geometry of the original system, in particular the notion of distance between queues. To this end, we introduce a replica-type mean-field construction.

More precisely, we define a replica-type prelimit system and prove propagation of chaos towards a limiting system in which the random interaction term is replaced by its averaged counterpart. This gives a rigorous interpretation of the approximation as the limit of a finite interacting system. The proof relies on classical Sznitman coupling techniques, adapted to Poisson random measures, in the spirit of mean-field limits for interacting neuron systems. In this way, the difficulty associated with random service rates in the original system is transferred to the McKean--Vlasov nonlinearity of the limiting system. Nevertheless, in stationarity, the limiting queues recover an $M/M/1$-type structure, which makes the approximation analytically tractable. We also observe that the well-posedness region of the approximation coincides with the stability region of the true system. This agreement further supports the relevance of the approximation.

We also introduce a second mean-field approximation. In this case, we do not address the question of identifying a prelimit system from which it arises. Instead, we focus on replacing the random transmission rate by an independent copy with the same distributional structure. This leads to a different class of tractable models, namely queues in a random autonomous environment. We use known stability and stochastic-ordering results for such systems, in particular convex-order comparisons with respect to the averaged environment. Using functional inequalities and the convexity properties of the Shannon rate, we obtain qualitative comparisons between the two mean-field approximations through Jensen-type arguments.

The analytical results are then compared with numerical simulations. The simulations confirm the theoretical predictions and highlight a significant difference between the full-buffer system and the true system. This confirms the need for a refined analysis that accounts for empty queues. The numerical results also show that the two mean-field approximations capture different aspects of the system: the first approximation performs well for the mean delay and congestion, whereas the second approximation is more accurate for the busy probability. Moreover, as the impact of interference decreases, the discrepancies between the different systems become smaller. One can check the simulation at the \href{https://github.com/Philippe-Sarotte/Interference-Queueing-Networks-A-Replica-Mean-Field-Approach-in-the-Symmetric-Setting}{associated GitHub repository} (https://github.com/Philippe-Sarotte/Interference-Queueing-Networks-A-Replica-Mean-Field-Approach-in-the-Symmetric-Setting).

We further study the long-time behaviour of the true system. To do this, we prove a Foster--Lyapunov drift condition and then use exponential ergodicity criterion. This result is relevant for numerical implementation, since it justifies that stationary samples can be obtained without simulating the system over excessively long time horizons.

Finally, we extend the analysis to systems with randomly spaced queues. This extension emphasizes that our approximations preserve the underlying geometry of the network and that the mean-field results can still be applied in this more general spatial setting.

\subsection{Structure of the paper}

The remainder of the paper is organized as follows. \Cref{sec:setting} introduces the main physical concepts underlying the analysis, fixes the notation, and presents the model in two equivalent formulations: through its Markovian description and through its SDE representation. The latter point of view will play an important role throughout the paper.  
\Cref{sec:main_r} states the main results, first in a mathematically rigorous form and then in a more intuitive formulation. More precisely, \Cref{sec:st_reg} describes the stability region, while \Cref{sec:stat_dist} studies the stationary distribution and its approximation by two McKean--Vlasov systems. One of these systems arises as the limit of a system of replicas, for which propagation of chaos is established. Next, \Cref{sec:long_time} investigates the long-time behavior of the original system. \Cref{sec:rd_pos} then considers random transmitter positions, which, in the symmetric case, amounts to random common spacings.  
The proofs of the results are given in \Cref{sec:proof}, in the same order as their presentation. \Cref{sec:extensions} discusses several extensions that will be addressed in subsequent papers. Finally, \Cref{sec:appendix} collects auxiliary results and proofs, included either for completeness or to avoid interrupting the flow of the main text.

\section{Setting, notation, and assumptions}\label{sec:setting}
\subsection{Assumption on the Physical Model}
As mentioned in the introduction, our model applies to almost any network sharing a common bandwidth, in which each transmitter serves only one receiver at a time and the distances between transmitters are relatively large compared with those between transmitters and receivers. As already acknowledged, one may argue that this framework is not fully realistic, especially given the additional assumptions of homogeneous arrival rates, equally spaced transmitters, assuming the availability of capacity-achieving codebooks across different channel conditions and density of the interaction graph neglecting frequency reuse. However, we emphasize that this paper is the first in a sequence of works whose purpose is to progressively relax these assumptions in order to better reflect reality. The goal here is to introduce the main ideas and mathematical tools in the simplest possible setting, so that the exposition remains clear.
\subsection{Key Physical quantities }
 Here we describe precisely the main quantities of the model, namely the Signal to interference Noise ratio (SINR) and the Shannon rate. 
\subsubsection{SINR}
The Signal to Interference Noise ratio is a key quantity to understand whether signal can be transmitted reliably at a given rate. For a scenario where a transmitter located at position \(0\) sends a message to a receiver at position \(x\) in the presence of other transmitters, having a message to send, located in the atomic set \(S\), we will use the following definition: 
\begin{align*}
    \mathrm{SINR}(x)=\frac{P_e\, h_0\, l(0,x)}{W+\sum_{y \in S} P_e\, h_y\, l(y,x)}.
\end{align*}
\begin{enumerate}
    \item  The transmissions power is \(P_e\). In this framework, all transmitters are assumed to operate at the same constant power.
    \item The fading coefficient is denoted by \(h_y\) for the \(y\)-th transmitter. This term models channel fluctuations due to multi path in a simplified manner without solving the full electromagnetic wave equations. Typically, \(h_y\) is a random variable following a specific distribution, for example, Rayleigh, see \cite{10.1561/1300000026}. Here we assume it is constant.
    
    \item  The path loss function is \(l(x,y)\). It accounts for the reduction in signal power with distance. The further away  the receiver is from the transmitter, the weaker the received signal. Common models include power-law functions, which may or may not exhibit a singularity at \(0\). In this paper, we consider a family of path-loss functions belonging to the following set:
\[
\mathcal{L}(A,C,r_0) \;=\; \left\{
\begin{aligned}
&\ell_1(r) = \frac{1}{\bigl(A \cdot \max(r,r_0)\bigr)^C}, \\[0.6em]
&\ell_2(r) = \frac{1}{\bigl(1 + A r\bigr)^C}, \\[0.6em]
&\ell_3(r) = A \, e^{-Cr}
\end{aligned}
\;\middle|\; A>0,\; C>0,\; r_0>0
\right\}.
\]

    \item \(W\) represents the power of the noise aside from interference, such as thermal noise. It can be modeled by a non negative random variable independent of the rest of the system. Here we assume it is constant.
\end{enumerate}

For the sake of readability, we henceforth set all constants equal to one. This does entail a loss of generality at the level of the model, but the analysis extends to arbitrary constants with only notational modifications.
\subsubsection{Shannon Rate}

The Shannon rate is another fundamental concept in telecommunications and information theory. When transmitting a message, one encodes it into symbols which are transmitted over a channel subject to perturbations before being decoded at the receiver. The goal is to design an encoding and decoding scheme such that the original message is recovered with probability arbitrarily close to one. In a basic, non-adaptive transmission scheme, successful transmission occurs if the SINR exceeds a certain threshold. However, when one has multiple codes that can be dynamically adapted to changing channel conditions, \cite{shannon1948mathematical} showed that when treating interference as noise, the maximum achievable rate of reliable information transmission is given by the Shannon capacity:
\begin{align*}
    C = B \log_2\bigl(1+\mathrm{SINR}\bigr),
\end{align*}
where \( B \) denotes the bandwidth used by the network. In our framework, we assume that we have coding schemes capable of matching any channel condition. We further assume that adaptation to channel conditions is instantaneous, and therefore ignore any delay induced by the adaptation mechanism.

\subsection{Description of the system and notation}\label{sec:descr}
\subsubsection{Markovian Description}
We consider a system of $N$ queues equally spaced on the torus $[0, N-1]$, 
where each job represents a file to be transmitted (see \Cref{fig:Figure 1}). The state of the system is 
described by the congestion process
\begin{align}
    Q(t) = \bigl(q_1(t), \dots, q_N(t)\bigr),
\end{align}
where $q_i(t)$ denotes the number of jobs present in queue $i$ at time $t$.

\begin{figure}[H]
    \centering
    \includegraphics[width=0.5\linewidth]{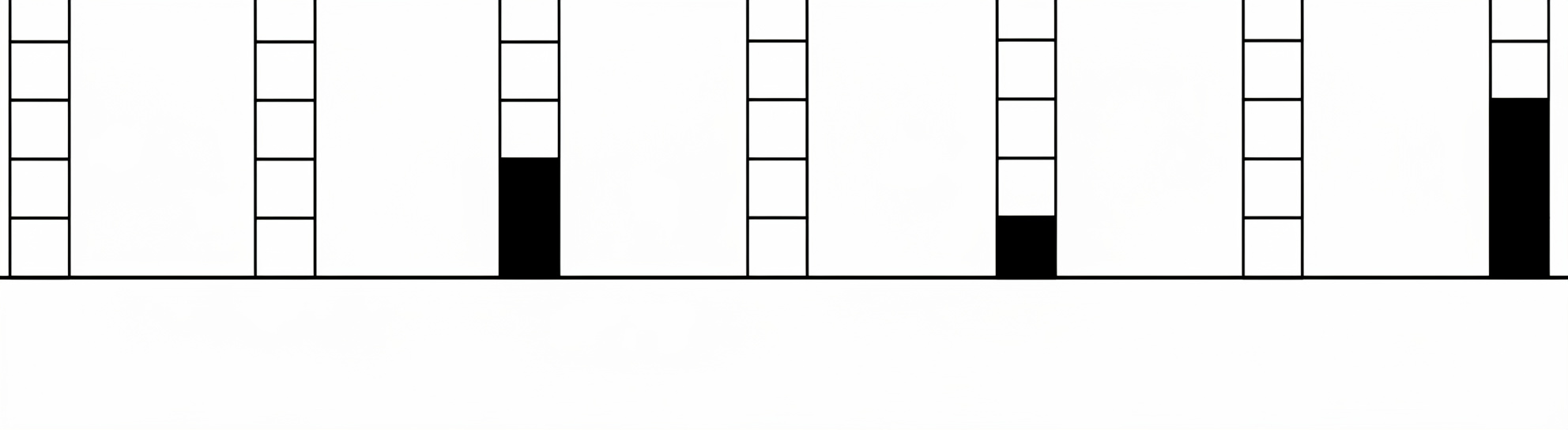}
    \caption{Mathematical representation of the symetric wireless network}
    \label{fig:Figure 1}
\end{figure}

We place ourselves in a Markov framework, hence $Q$ is a (non explosive) continuous time Markov chain with respect to its natural filtration and can be described thanks to its generator:  For any bounded $f$ from $\mathbb{R}^N$ to $\mathbb{R}$:
\begin{align}
    \mathcal{L}f(q)
= \lambda \sum_{i} \big( f(q + e_i) - f(q) \big)
  \;-\;
  \sum_{i} \mu_i(q)\,\big( f(q) - f(q - e_i) \big)\,\mathbf{1}_{\{q_i > 0\}} ,
\end{align}

where \begin{itemize}
    \item $
\mu_i(q) = \frac{B}{L} \log_2\!\left(1 + \frac{1}{1 + \sum_{j \neq i} \mathbf{1}_{\{q_j > 0\}} \, l(d(i, j))}\right).
$
    \item  The vector $e_i$ is only 0 except at position $i$ there is a $1$.
    \item $l \in \mathcal{L}(A,C,r_0)$, 
    \item The circular distance between the i-th buffer and the j-th one is denoted by $d(i,j)$. 
    \item We define $\xi_N:=\sum_{j \neq i} l(d(i, j))$, which refers to the worst interference conditions, that is, all neighboring queues are non-empty. Note that it is a common quantity for each queues in our equally spaced, periodic boundary conditions. 
    \item In the same fashion we define $\mu^*:= \frac{B}{L} \log_2\!\left(1 + \frac{1}{1 + \xi_N}\right)$ and $\mu^{**}:= \frac{B}{L}$ as being the worst and best service rates.
\end{itemize}

In essence, the system operates as follows:
\begin{itemize}
    \item A job is a file made of bits
    \item Job arrivals at each queue form independent Poisson processes with rate \( \lambda \).
    \item Job sizes are i.i.d.\ exponential random variables with parameter (mean) \( L \), independent of the arrival processes. We will take $L=1$ to simplify notations.
    \item The instantaneous transmission rate of queue \( i \) at time \( t \) is denoted by \( \mu_i(q(t)) \).
\end{itemize}

\begin{remark}
   The full-buffer system is obtained by replacing each \( \mathbf{1}_{\{q_j > 0\}} \) with \( 1 \). In this case, the system reduces to \( N \) independent \( M/M/1 \) queues. 
\end{remark}
\begin{remark}
    We used the Shannon rate for the departure rate but actually we can broaden the analysis to any $g :[0;\xi_N] \to \mathbb{R}^+$, being bounded, non increasing, convex, smooth enough (mainly Lipschitz) and $\mu_i(t)=\frac{1}{L}g(\sum_{j \neq i} \mathbf{1}_{\{q_j(t) > 0\}} \, l(d(i, j)))$.
    Doing so, all the following results still hold. The Shannon rate description is retrieved by taking $g(x)=B\log_2(1+\frac{1}{1+x})$.
\end{remark}

\begin{remark}
    Thanks to the periodic boundary condition and the equal spacing of the queues, they have the same marginals. 
\end{remark}

\subsubsection{Dynamical description}
Here we describe the system in terms of SDE with respect to Poisson random measures which is, as we mentioned, 
crucial in our analysis . We will see that those two representations are equivalent in law thanks to Martingale problem considerations. 
Consider  $(A_i)_{i\in [0;N-1]}$, $N$ i.i.d. Poisson point processes of intensity $\lambda\rm{d}x$. In addition consider $(D_i)_{i \in [0;N-1]}$ i.i.d. Poisson point processes of intensity $\rm{d}xdy$. Now define $\Tilde{Q}=(\Tilde{q}_1, \dots,\Tilde{q}_N)$ as 
\begin{align}\label{eq:sde}
\widetilde q_i(t)
&=
\widetilde q_i(0)
+ A_i[0,t]
- \int_0^t \int_{\mathbb R_+}
\mathbf 1_{\{\widetilde q_i(z^-)>0\}}
\mathbf 1_{\{u \leq \mu_i(\widetilde Q(z^-))\}}
\, D_i(\mathrm du,\mathrm dz),
\qquad i=0,\dots,N-1 .
\end{align}

\begin{remark}

    (3) is well defined and represents the same object in law as (1). See \Cref{sec:appendix} for the proofs.
\end{remark}
\subsubsection{Notation}

\begin{itemize}
    \item For $x, y \in \mathbb{N}^N$, we denote by $x \leq y$ the partial order on $\mathbb{N}^N$ defined component-wise, that is,
    \[
    x \leq y
    \quad \text{if and only if} \quad
    x_i \leq y_i \text{ for all } i \in \{1, \dots, N\}.
    \]

    \item For two $\mathbb{N}^N$-valued random variables $X$ and $Y$, we write $X \leq_{\mathrm{st}} Y$ whenever $X$ is stochastically dominated by $Y$.

    \item For two $\mathbb{N}^N$-valued random variables $X$ and $Y$, we write $X \leq_{\mathrm{icx}} Y$ whenever $X$ is greater than $Y$ for the increasing convex ordering, see \cite{baccelli1994elements} for more details. 
    
\end{itemize}

\section{Main Results}\label{sec:main_r}

\subsection{Stability Region}\label{sec:st_reg}

In this section, we state the stability region of the system.
Since $Q$ is a countable state space Markov chain, it is natural to characterize its region of positive recurrence. Our goal is to study this region as a function of the parameter $\lambda$. Intuitively, as the arrival rate increases, the number of jobs in the system also increases, leading to more severe interference conditions and, consequently, to congestion buildup. One may then ask whether there exists a threshold beyond which the system becomes unstable. We show that this is indeed the case. Relying on monotonicity properties and using the full-buffer system as a benchmark, we prove that this threshold is exactly $\mu^*$. In other words, stability requires the arrival rate to be smaller than the minimal service rate.

\begin{theorem}[Stability Region]

For $\lambda > \mu^*$, the process $Q$ is transient, whereas for $\lambda < \mu^*$, it is positive recurrent.
\end{theorem}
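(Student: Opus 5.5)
We use the SDE representation \eqref{eq:sde} and drive the original system $Q$ and the full-buffer system $Q^{\mathrm{fb}}$ with the same Poisson random measures $(A_i)$ and $(D_i)$. In $Q^{\mathrm{fb}}$ every indicator $\mathbf 1_{\{q_j>0\}}$ is replaced by $1$, so each queue is an independent $M/M/1$ queue with arrival rate $\lambda$ and service rate $\mu^*$.

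\textbf{Positive recurrence for $\lambda<\mu^*$.} The rate function $\mu_i(q)$ is non-increasing in $q$ componentwise, and $\mu_i(q)\ge \mu^*$ for every $q$. We first show by induction over the jump times of the (a.s. locally finite, since all rates are bounded by $\lambda+\mu^{**}$) driving measures that $Q(t)\le Q^{\mathrm{fb}}(t)$ for all $t$, provided $Q(0)\le Q^{\mathrm{fb}}(0)$.

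Only a departure atom $(u,z)$ of $D_i$ can break the order. Suppose $q_i(z^-)=q^{\mathrm{fb}}_i(z^-)>0$ and $u\le\mu^*$. Since $\mu^*\le \mu_i(Q(z^-))$, queue $i$ also jumps down in $Q$, so the order is preserved. In every other case the order is preserved trivially.

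Start both systems from $0$. Then $\{Q^{\mathrm{fb}}(t)=0\}\subset\{Q(t)=0\}$. The product of $N$ ergodic $M/M/1$ queues has a stationary probability $(1-\lambda/\mu^*)^N>0$ of being empty, and its return times to $0$ have finite mean. Hence $Q$ visits $0$ infinitely often, with expected return time bounded by that of $Q^{\mathrm{fb}}$. Irreducibility of $Q$ is clear, so $Q$ is positive recurrent. Alternatively, the expected return time bound can be obtained by a regenerative argument at the hitting times of $0$ of the dominating chain.

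\textbf{Transience for $\lambda>\mu^*$.} Domination now runs in the wrong direction, so we argue locally. Let $\tau$ be the first time some coordinate of $Q$ hits $0$. On $[0,\tau)$ all indicators equal $1$, so $Q$ coincides with the coupled full-buffer system started from the same point. Each coordinate of the latter is a random walk with drift $\lambda-\mu^*>0$ reflected at $0$. By the gambler's ruin estimate, a coordinate started at $m$ never hits $0$ with probability $1-(\mu^*/\lambda)^m$. The coordinates are independent, so
\[
P_q(\tau=\infty)\ \ge\ \prod_{i}\bigl(1-(\mu^*/\lambda)^{q_i}\bigr)\ >\ 0
\]
whenever all $q_i\ge 1$. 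On the event $\{\tau=\infty\}$ we also get $q_i(t)\to\infty$ for all $i$ almost surely.

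Fix $q=(1,\dots,1)$. The chain started at $0$ reaches $q$ with positive probability, so the probability of never returning to $0$ is positive. Hence $0$ is not recurrent and, by irreducibility, $Q$ is transient.

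To upgrade this to divergence of $|Q(t)|$ almost surely, we use a standard Lévy 0--1 argument. From any state, within a bounded time the chain reaches $\{\min_i q_i\ge m\}$ with probability bounded below uniformly; this holds because arrivals in a short window can lift all coordinates. From such a state, escape without any coordinate hitting $0$ has probability at least $1-N(\mu^*/\lambda)^m$. Repeated trials then give escape almost surely.

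\textbf{Main obstacle.} The delicate step is this last uniformity. A chain may sit with some coordinates small while others are huge, so it is not immediately clear that it reaches the region $\{\min_i q_i\ge m\}$ with probability bounded below. We handle it by noting that service rates are bounded by $\mu^{**}$. In a time window of length $1$, the probability that every queue receives at least $m$ arrivals and experiences no departure atom with $u\le\mu^{**}$ is a positive constant independent of the state. This yields the uniform lower bound needed for the strong Markov and Borel--Cantelli argument.
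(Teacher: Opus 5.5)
Your proof is correct and follows the paper's route. For $\lambda<\mu^*$ you dominate $Q$ by the full-buffer system of independent $M/M/1(\lambda,\mu^*)$ queues; your return-time comparison at $0$ spells out the step from domination to positive recurrence, which the paper leaves implicit. For $\lambda>\mu^*$ you use that $Q$ coincides with the full-buffer system until some coordinate empties, together with the gambler's-ruin bound. Your final ``upgrade'' paragraph is not needed for the statement, since almost sure divergence of $|Q(t)|$ already follows from transience and irreducibility (every finite set is eventually left forever); it does, however, correctly give the stronger fact $\min_i q_i(t)\to\infty$ almost surely.
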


\begin{remark}
    In \cite{fayolle2017random} necessary and  sufficient conditions for ergodicity (here
same as positive recurrence) were established for such chains in the case N = 2. One can check our condition match their. In  $N=2$ , they are also able to settle the boundary case  $\lambda=\mu^*$ , showing that the process is not positive recurrent — a result that goes beyond our analysis.
\end{remark}

\begin{remark}
    Analysing the system without the full buffer assumption does not change the performance compared to the full buffer system in terms of range of the stability region. 
\end{remark}

\begin{remark}
    Since the chain $Q$ is irreducible, for $\lambda < \mu^*$, the system admits a unique stationary distribution that we denote by $\pi_\lambda$
\end{remark}

We introduce here the notion of association and prove that the system satisfies this property. Informally, association means that the occupancy states of the queues are positively correlated: if some queues are busy, then the remaining queues are more likely to be busy as well, and similarly for emptiness. This is a notable structural property for interacting particle systems, and it plays a fundamental role in classical models such as the Ising model; see \cite{FriedliVelenik2017}. Although this property is not used in the sequel, it may provide a useful tool for future developments.

\begin{definition}
    We say that $\mu$ is associated if for any $f$ and $g$ increasing real function for the partial order on $\mathbb{N}^N$,
    \begin{align}
        \mu(fg)\geq \mu(f)\mu(g)
    \end{align}
\end{definition}

\begin{definition}
    We say that $Q$ is associated if for any $f$ and $g$ increasing real function for the partial order on $\mathbb{N}^N$, and $t \in \mathbb{R}^+ $
    \begin{align}
        &\mathbb{P}^{Q(t)}(fg)\geq \mathbb{P}^{Q(t)}(f)\mathbb{P}^{Q(t)}(g) \Leftrightarrow \mathbb{E}[f(Q(t))g(Q(t))]\geq\mathbb{E}[f(Q(t))]\mathbb{E}[g(Q(t))]
    \end{align}
\end{definition}

\begin{proposition}[The system is associated]
    $\forall x \in \mathbb{N}^N, Q$ with initial law $\delta_x$ is associated.
\end{proposition}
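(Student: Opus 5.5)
We will prove the proposition with Harris' theorem on association for monotone Markov processes (Harris 1977, or Liggett, Interacting Particle Systems, Thm. II.2.14). That theorem says: if a continuous-time Markov chain on a partially ordered countable space is \emph{monotone} (attractive) and every transition is between \emph{comparable} states, then any associated initial law stays associated for all times. A Dirac mass $\delta_x$ is trivially associated, since for increasing $f,g$ we have $\delta_x(fg)=f(x)g(x)=\delta_x(f)\delta_x(g)$. So the proposition reduces to checking these two structural hypotheses for the generator $\mathcal{L}$. Non-explosiveness, which Harris' argument also needs, is part of the Markovian description.

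\textbf{Comparability of jumps.} Every transition of $Q$ is of the form $q\to q+e_i$ or $q\to q-e_i$. Both targets are comparable to $q$ in the componentwise order. This is the ``up-or-down'' condition in Harris' theorem.

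\textbf{Monotonicity.} We show attractiveness by an explicit order-preserving coupling built from the SDE representation \eqref{eq:sde}. Take $x\le y$ and drive both copies $\widetilde Q^x,\widetilde Q^y$ with the same Poisson measures $(A_i)$ and $(D_i)$. We check that $\widetilde Q^x(t)\le\widetilde Q^y(t)$ is preserved at every atom.
\begin{itemize}
\item Arrivals from $A_i$ add $1$ to coordinate $i$ in both copies, so they preserve the order.
\item A departure atom $(u,z)$ of $D_i$ can break the order only when $\widetilde q^x_i(z^-)=\widetilde q^y_i(z^-)>0$ and copy $y$ loses a job while copy $x$ does not. That requires $\mu_i(\widetilde Q^y(z^-))\ge u>\mu_i(\widetilde Q^x(z^-))$.
\item This cannot happen. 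The map $q\mapsto \sum_{j\ne i}\mathbf 1_{\{q_j>0\}}l(d(i,j))$ is nondecreasing in $q$, and $g(s)=\log_2(1+1/(1+s))$ is nonincreasing. Hence $\mu_i(\widetilde Q^x)\ge \mu_i(\widetilde Q^y)$ whenever $\widetilde Q^x\le \widetilde Q^y$.
\item If instead $\widetilde q^x_i<\widetilde q^y_i$, a single departure from $y$ still leaves $\widetilde q^y_i\ge\widetilde q^x_i$.
\end{itemize}
Thus the order is preserved pathwise. Consequently $P_tf$ is increasing for every bounded increasing $f$, which is the monotonicity of the semigroup. This uses only that $g$ is nonincreasing, consistent with the remark on general $g$.

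\textbf{Applying Harris.} With both hypotheses in hand, the theorem yields $\mathbb E[f(Q(t))g(Q(t))]\ge \mathbb E[f(Q(t))]\,\mathbb E[g(Q(t))]$ for all bounded increasing $f,g$ and all $t\ge0$. This covers the functions in the definition, with the usual truncation or monotone convergence for unbounded increasing $f,g$ whenever the expectations are finite.

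\textbf{Fallback and main obstacle.} If one prefers not to quote Harris, we would run the standard proof directly. Set $\Phi(s)=P_s\big((P_{t-s}f)(P_{t-s}g)\big)$ and differentiate in $s$. The derivative involves the \emph{carré du champ}
\[
\Gamma(F,G)=\mathcal L(FG)-F\mathcal L G-G\mathcal L F,
\]
which is a sum over jumps of rate times $(F(q')-F(q))(G(q')-G(q))$. This is nonnegative for increasing $F,G$ because each jump moves between comparable states. Monotonicity guarantees $P_{t-s}f$ and $P_{t-s}g$ remain increasing, so $\Phi(t)\ge\Phi(0)$, which gives the claimed inequality.

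The main technical obstacle is justifying this differentiation on the infinite countable state space with unbounded rates summed over the boundary. Bounded total jump rate, at most $N(\lambda+\mu^{**})$, makes $\mathcal L$ a bounded operator on bounded functions. Hence $P_t=e^{t\mathcal L}$ and all manipulations are legitimate.
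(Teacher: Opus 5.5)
Your proof is correct, and its core is the same as the paper's. Both rest on Harris's correlation inequality for a monotone chain whose jumps connect comparable states, started from the trivially associated law $\delta_x$. Both get monotonicity from the common-Poisson-measure coupling; the paper just cites Lemma~A.3, while you re-derive the ordered-initial-condition case.

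Where you differ is in how the infinite state space is handled.

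\begin{itemize}
\item \textbf{The paper} truncates at level $M$ so that Harris's finite-state theorem can be quoted verbatim, then lets $M\to\infty$ through a monotone coupling. This needs the truncated process to be a genuine finite-state monotone chain with up/down jumps. An example is arrivals blocked at $M$, driven by the same Poisson measures, so that $Q^M\le Q$ and $Q^M(t)=Q(t)$ on $\{\sup_{s\le t}\max_i q_i(s)\le M\}$. The componentwise minimum $q_i(t)\wedge M$, read literally, is not itself Markov. A limit passage is then also required.
\item \textbf{You} avoid both steps. Total jump rates are bounded by $N(\lambda+\mu^{**})$, so $\mathcal L$ is a bounded operator on bounded functions and $P_t=e^{t\mathcal L}$. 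Then $\Phi(s)=P_s\big((P_{t-s}f)(P_{t-s}g)\big)$ has derivative $P_s\Gamma(P_{t-s}f,P_{t-s}g)\ge 0$, which gives the inequality directly on $\mathbb N^N$.
\end{itemize}

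One caveat on presentation. Harris's theorem is stated for finite state spaces, and Liggett's Theorem II.2.14 for compact Feller processes, so neither literally covers $\mathbb N^N$. Your ``fallback'' is therefore the actual rigorous argument and should be presented as the main proof. With it, your route is shorter and more self-contained than the truncation-and-limit one.
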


\subsection{Stationary distribution}\label{sec:stat_dist}

Now that the stability region has been identified, a natural question is to understand the stationary behavior of the system. This would require, in principle, a description of its stationary distribution. Such a description is difficult to obtain, since the problem can be viewed as that of finding the stationary distribution of a random walk in the positive orthant with boundary conditions. In dimension two, a direct characterization was obtained in \cite{fayolle2017random}. In higher dimension, however, the problem is known to become substantially more involved; see, for instance, \cite{Cohen1984}.

\subsubsection{Mean-Field 1}

Having characterized the maximal stability region, we now turn to the equilibrium behavior of the system. In principle, performance metrics could be derived along the lines of \cite{sankararaman2017spatial,sankararaman2019interference}, using the Rate Conservation Principle applied to suitable test functionals. In the present work, we instead take a mean-field approximation viewpoint.

The main difficulty comes from the state-dependent departure rates. It is therefore natural to compare the original system with a simplified one in which the relevant random quantity is replaced by its mean. 

An additional constraint is that the notion of distance between queues must be retained in the approximation. This prevents us from considering prelimit systems in which the number of queues tends to infinity, as in \cite{fournier2015toymodelinteractingneurons,Goldsztajn2026}. We therefore introduce a replica-type prelimit system, in the spirit of \cite{Baccelli2019}: it replaces the problematic random quantity by its limiting counterpart while preserving the underlying geometry.

\begin{definition}(K-Replica System)
    We denote by $Q^K=(q^{(K,l)}_i)_{l \in [1,K], i \in [1,N]}$ the K-Replica System which is a Markov chain on $(\mathbb{N}^N)^K$ defined by the generator : \begin{align}
        \mathcal{L}^Kf(q)
=
\lambda \!\sum_{i,l}\!\big(f(q + e_{(l,i)}) - f(q)\big)
-
\sum_{i,l}\!\mu^K_i(q)\big(f(q) - f(q - e_{(l,i)})\big)
\mathbf{1}_{\{q_{(l,i)}>0\}} ,
    \end{align}
where \begin{itemize}
    \item $e_{(l,i)}$ is the vector of $(\mathbb{N}^N)^K$ having a 1 position $l \times i $,
    \item $ \mu^K_i(t) = B \log_2\!\Biggl(1+\frac{1}{\,1+\frac{1}{K}\sum\limits_{k=1}^{K} I_i^k(t)}\Biggr)$,
    \item $I_i^k(t) = \sum_{j \neq i} \mathbf{1}_{\{q_j^{(K,k)}(t) > 0\}} \, l(d(i,j))$.
\end{itemize}
    
\end{definition}

\begin{remark}
    Note that every i-th queue of each replica experiences the same departure rate $\mu^K_i(t)$ for all times. 
\end{remark}

\begin{definition}[McKean-Vlasov 1]
    Consider $A_{l,i}$ and $D_{l,i}$  Poisson processes of intensity $\lambda\rm{dx}$ and Poisson processes of intensity $\rm{dxdy}$ respectively, each being independent in $l,i$. The initial condition
\[
\left(q_i^{(MF1,l)}(0)\right)_{l\geq 1,\;0\leq i\leq N-1}
\]
is assumed to be invariant in law under translations of the index $i$ on the
discrete torus.
Define $Q^{MF1}=(q^{(MF1,l)}_i)_{l \in \mathbb{N}_*, i \in [1,N]}$
    \begin{align}\label{eq:MF1}
q_i^{(MF1,l)}(t)
&=
q_i^{(MF1,l)}(0)
+
A_{l,i}[0,t]
\nonumber \\
&\quad
-
\int_0^t \int_{\mathbb R_+}
\mathbf 1_{\{q_i^{(MF1,l)}(z^-)>0\}}
\mathbf 1_{\{u\leq \mu^{MF1}(z^-)\}}
\,D_{l,i}(\mathrm du,\mathrm dz),
\end{align}
where  $\mu^{MF1}(z)=B\log_2(1+\frac{1}{1+\mathbb{P}(q_1^{(MF1,1)}(z)>0)\xi_N})$.

\end{definition}

\begin{remark}
    Note that $\mu^{MF1}(z)$ does not depend on $i$, which is the main benefit of working in the symmetric setting. 
\end{remark}

Since (7) is nonlinear, we first need to justify that the equation is well posed

\begin{theorem}[Well-posedeness of McKean--Vlasov]

(7) admits a strong solution which is path-wise unique.
\end{theorem}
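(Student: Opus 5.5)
We prove the statement by a fixed-point argument on the deterministic function $p(t)=\mathbb{P}(q_1^{(MF1,1)}(t)>0)$, working on a finite horizon $[0,T]$ and then letting $T$ grow. The key point is that, in (7), the replicas and the sites interact only through $\mu^{MF1}$. This is a deterministic function of time, and by translation invariance it is the same for every $i$. So once $\mu^{MF1}$ is frozen, each coordinate solves a linear (non-McKean) SDE driven by its own Poisson measures $A_{l,i},D_{l,i}$.

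\textbf{Step 1 (frozen environment).} For a measurable $m:[0,T]\to[\mu^*,\mu^{**}]$, consider
\begin{align*}
q^m(t)=q(0)+A[0,t]-\int_0^t\int_{\mathbb R_+}\mathbf 1_{\{q^m(z^-)>0\}}\mathbf 1_{\{u\le m(z^-)\}}\,D(\mathrm du,\mathrm dz).
\end{align*}
This equation has a unique strong solution, constructed jump by jump. The jump rate is bounded by $\lambda+\mu^{**}$, so there are almost surely finitely many atoms in $[0,T]\times[0,\mu^{**}]$ and no explosion. Pathwise uniqueness follows for the same reason: the solution is determined atom by atom.

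\textbf{Step 2 (the map).} Define $\Phi(m)(t)=g\bigl(\mathbb{P}(q^m(t)>0)\,\xi_N\bigr)$, where $g(x)=B\log_2\bigl(1+\frac{1}{1+x}\bigr)$. Here $g$ is Lipschitz on $[0,\xi_N]$ with some constant $L_g$, and it takes values in $[\mu^*,\mu^{**}]$. A solution of (7) is exactly a fixed point $m=\Phi(m)$ together with the corresponding $q^m$ on every replica and site.

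\textbf{Step 3 (contraction).} Couple $q^m$ and $q^{m'}$ using the same $A$, $D$ and initial condition. Their paths can first differ only at an atom $(z,u)$ of $D$ with $u$ lying between $m(z)$ and $m'(z)$. Hence
\begin{align*}
\bigl|\mathbb{P}(q^m(t)>0)-\mathbb{P}(q^{m'}(t)>0)\bigr|\le\mathbb{P}\bigl(q^m(s)\ne q^{m'}(s)\text{ for some } s\le t\bigr)\le\int_0^t|m(s)-m'(s)|\,\mathrm ds,
\end{align*}
and therefore $|\Phi(m)(t)-\Phi(m')(t)|\le L_g\xi_N\int_0^t|m-m'|\,\mathrm ds$. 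Iterating this bound gives $\|\Phi^n m-\Phi^n m'\|_{\infty,T}\le\frac{(L_g\xi_N T)^n}{n!}\|m-m'\|_{\infty,T}$. Alternatively, one can use the weighted norm $\sup_t e^{-\beta t}|m(t)|$ with $\beta>L_g\xi_N$. Either way, $\Phi$ has a unique fixed point on the complete set of measurable $[\mu^*,\mu^{**}]$-valued functions.

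\textbf{Step 4 (consistency and uniqueness).} Fixed points on $[0,T]$ and $[0,T']$ agree on the overlap, so they glue into a global solution. Take any strong solution of (7). Its $p(t)$ gives an environment $m$ that is a fixed point of $\Phi$, so $m$ is the unique fixed point, and pathwise uniqueness of $q$ then follows from Step 1.

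Translation invariance of the initial law makes $p$ independent of $i$. This is needed so that $\mu^{MF1}$ is well defined with the index-$1$ marginal, and the solution inherits this invariance.

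\textbf{Main obstacle.} The main technical point is the coupling estimate in Step 3. One must check that the indicator $\mathbf 1_{\{q>0\}}$, which is non-Lipschitz in $q$, causes no trouble. The first-discrepancy argument handles this, because before the paths split they coincide, and so do their indicators. For the same reason, no moment assumptions on $q(0)$ are needed.
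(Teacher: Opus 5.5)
Your proof is correct and follows essentially the same route as the paper. You freeze the deterministic rate, couple $q^m$ and $q^{m'}$ through common Poisson measures, bound the discrepancy by the Lebesgue measure of the region between the two thinning levels, and conclude with Banach's fixed point theorem. The differences are minor. You parametrize by the rate $g(p\,\xi_N)$ rather than the busy probability $p$. You use a first-discrepancy bound instead of $|\mathbf 1_{\{q>0\}}-\mathbf 1_{\{q'>0\}}|\le|q-q'|$. Finally, by keeping the Volterra form $\int_0^t|m-m'|\,\mathrm ds$ you get a global fixed point directly, via contracting iterates or a weighted norm; the paper instead bounds by $T L\xi_N$, contracts on a short interval $\widetilde T$, and restarts.
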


In some interacting particle models, such a replacement, between the random quantity and the mean, arises directly in a suitable asymptotic regime of the original system; see, for example, \cite{fournier2015toymodelinteractingneurons}. In the present setting, however, this is not the case: one needs to introduce an auxiliary prelimit system for which the quantity of interest emerges in the appropriate limit.

Consequently, unlike in a direct limiting procedure, one should not expect immediate quantitative or qualitative bounds between the original system and the limiting system. Nevertheless, the two systems can be compared numerically, and the convergence of the auxiliary prelimit system toward the intended limit can be established rigorously.

\begin{theorem}[Propagation of chaos 1]
Let $T > 0$ and $i \in [1,N]$. Assume that, for all $K$, for all $l \leq K$, and for all $j \in [1,N]$,
\[
q_j^{(K,l)}(0) = q_j^{(MF1,l)}(0),
\]
and that $\bigl(q_j^{(MF1,l)}(0)\bigr)_{j,l}$ are i.i.d. with finite mean. Then for all $i \in [1;N]$,
\[
\bigl(q_i^{(K,1)}(t)\bigr)_{0 \leq t \leq T}
\xrightarrow[K \to \infty]{d}
\bigl(q_i^{(MF1,1)}(t)\bigr)_{0 \leq t \leq T}.
\]
\end{theorem}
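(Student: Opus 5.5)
We would use a Sznitman-type synchronous coupling, realizing the $K$-replica system and the McKean--Vlasov system on one probability space. The replica system is written as an SDE analogous to \eqref{eq:sde}, driven by the same Poisson random measures $A_{l,i}$ and $D_{l,i}$ and started from the same initial condition as $Q^{MF1}$ in \eqref{eq:MF1}:
\[
q_i^{(K,l)}(t)=q_i^{(K,l)}(0)+A_{l,i}[0,t]-\int_0^t\int_{\mathbb R_+}\mathbf 1_{\{q_i^{(K,l)}(z^-)>0\}}\mathbf 1_{\{u\le \mu_i^K(z^-)\}}\,D_{l,i}(\mathrm du,\mathrm dz).
\]
Since arrivals are shared, the difference $q_i^{(K,l)}-q_i^{(MF1,l)}$ jumps only at atoms of $D_{l,i}$ where exactly one of the two indicator products equals one. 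It therefore suffices to bound $\E\big[\sup_{t\le T}|q_i^{(K,1)}(t)-q_i^{(MF1,1)}(t)|\big]$, or even just the probability that the two paths ever differ on $[0,T]$, by a quantity tending to $0$. Convergence in law on path space then follows.

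\textbf{Step 1: the basic estimate.} Both rates take values in $[\mu^*,\mu^{**}]$. Each path has jumps of size $1$, and a disagreement in the departure indicator occurs at rate at most
\[
\big|\mathbf 1_{\{q^{K}>0\}}-\mathbf 1_{\{q^{MF1}>0\}}\big|\,\mu^{**}+|\mu_i^K(z)-\mu^{MF1}(z)|.
\]
Taking expectations and summing over $i$ and over the exchangeable replicas gives, with $\Delta(t):=\frac1N\sum_i\E\sup_{s\le t}|q_i^{(K,1)}(s)-q_i^{(MF1,1)}(s)|$,
\[
\Delta(t)\le \int_0^t\Big(\mu^{**}\Delta(z)+\max_i\E|\mu^K_i(z)-\mu^{MF1}(z)|\Big)\mathrm dz ,
\]
where we used $|\mathbf 1_{\{x>0\}}-\mathbf 1_{\{y>0\}}|\le|x-y|$ on integers. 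The constant $\mu^{**}$ could be refined, but any constant works.

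\textbf{Step 2: splitting the rate discrepancy.} The map $g(x)=B\log_2(1+\frac1{1+x})$ is Lipschitz on $[0,\xi_N]$ with some constant $L_g$. We insert the mean-field average and write $\bar I_i^k(z):=\sum_{j\ne i}\mathbf 1_{\{q_j^{(MF1,k)}(z)>0\}}l(d(i,j))$. Then
\[
\E|\mu^K_i-\mu^{MF1}|\le L_g\,\E\Big|\frac1K\sum_{k=1}^K\big(I_i^k-\bar I_i^k\big)\Big|+L_g\,\E\Big|\frac1K\sum_{k=1}^K\bar I_i^k-\E\bar I_i^1\Big|.
\]
Here we use the translation invariance of $Q^{MF1}$, which gives $\E\bar I_i^1=\mathbb P(q_1^{(MF1,1)}>0)\,\xi_N$, so that the mean-field rate is exactly $g(\E\bar I_i^1)$. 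Translation invariance of the law is preserved by the dynamics because $\mu^{MF1}$ does not depend on $i$.

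The first term is at most $\max_j l\cdot\sum_{j}\E|q_j^{(K,1)}-q_j^{(MF1,1)}|\le C_N\Delta(z)$, by exchangeability of replicas under the coupling.

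For the second term, the processes $(q^{(MF1,k)}_\cdot)_k$ are independent across $k$, since they are driven by independent noises and have i.i.d.\ initial conditions, and the McKean--Vlasov rate is deterministic. The $\bar I^k_i$ are therefore i.i.d.\ and bounded by $\xi_N$, and the second term is $\le L_g\xi_N/\sqrt K$ by the variance bound.

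\textbf{Step 3: conclusion.} Combining the steps gives $\Delta(t)\le C\int_0^t\Delta(z)\,\mathrm dz+CT/\sqrt K$. Grönwall's lemma then yields $\Delta(T)\le C_Te^{CT}K^{-1/2}\to0$. This implies convergence of $(q_i^{(K,1)})_{t\le T}$ to $(q_i^{(MF1,1)})_{t\le T}$ in $L^1$ in the uniform norm, hence in law in the Skorokhod space. Finiteness of all expectations uses the finite mean of the initial condition together with $\E A[0,T]<\infty$. Well-posedness of \eqref{eq:MF1}, given by the preceding theorem, is needed to define the coupling.

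\textbf{Main obstacle.} The delicate point is Step 1: controlling the expected supremum of the difference via the compensator. A departure mismatch changes $|q^K-q^{MF1}|$ by $\pm1$, and one must make sure the indicator-difference term is dominated by $\Delta$ itself. I would first bound $\E\sup|q^K-q^{MF1}|$ by the expected total number of mismatched $D$-atoms, which equals the integral of the mismatch rate. If this is problematic, I would instead control $\mathbb P(\text{paths differ before }T)$, which gives the same Grönwall structure with indicator distances.
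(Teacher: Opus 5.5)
Your proposal is correct and takes essentially the same route as the paper: a synchronous coupling through shared Poisson measures, the Lipschitz bound on $g$, a split into a coupling error (handled via exchangeability of replicas and translation invariance) plus an i.i.d. fluctuation term of order $\xi_N/\sqrt K$, then Gr\"onwall, and finally passing from $L^1$ closeness in the uniform norm to convergence in law on Skorokhod space. The only difference is your extra $\mu^{**}$ term for empty/non-empty mismatches, which the paper avoids by noting that departure marks below both thresholds can never increase $|q_i^{(K,1)}-q_i^{(MF1,1)}|$, so only marks in the symmetric difference of the two thinning regions count; this changes the Gr\"onwall constant and nothing else.
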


\begin{theorem}[Propagation of chaos 2]
Let $T > 0$. Assume that, for all $K$, for all $l \leq K$, and for all $j \in [1,N]$,
\[
q_j^{(K,l)}(0) = q_j^{(MF1,l)}(0),
\]
and that $\bigl(q_j^{(MF1,l)}(0)\bigr)_{j,l}$ are i.i.d. with finite mean. Then
\[
\bigl((q_i^{(K,1)}(t))_{i \in [1,N]}\bigr)_{0 \leq t \leq T}
\xrightarrow[K \to \infty]{d}
\bigotimes_{i=1}^{N}
\bigl(q_i^{(MF1,1)}(t)\bigr)_{0 \leq t \leq T}.
\]
\end{theorem}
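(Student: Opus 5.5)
We will prove Propagation of chaos 2 with a Sznitman-type synchronous coupling on a common probability space. The replica system is driven by the same Poisson random measures $A_{l,i}$ and $D_{l,i}$ as the McKean--Vlasov system \eqref{eq:MF1}, and both start from the same initial conditions. Since convergence in law follows from convergence in probability in the Skorokhod space (indeed in the uniform topology on $[0,T]$ for these pure-jump integer-valued paths), it suffices to show
\[
\mathbb{E}\Bigl[\sup_{t\le T}\sum_{i=1}^N \bigl|q_i^{(K,1)}(t)-q_i^{(MF1,1)}(t)\bigr|\Bigr]\xrightarrow[K\to\infty]{}0 .
\]
We then identify the law of the limit $(q_i^{(MF1,1)})_{i}$ as the product law $\bigotimes_i \mathcal{L}(q_i^{(MF1,1)})$.

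\textbf{Step 1 (independence of the limit).} In \eqref{eq:MF1}, the rate $\mu^{MF1}(z)$ is deterministic. Hence $q_i^{(MF1,1)}$ is a measurable functional of $(q_i^{(MF1,1)}(0),A_{1,i},D_{1,i})$ alone, using pathwise uniqueness from the well-posedness theorem. These triples are independent across $i$ by the i.i.d. assumption, so the coordinates $i\mapsto q_i^{(MF1,1)}$ are independent. The same argument shows that the processes $(q_j^{(MF1,k)})_{k\ge1}$ are i.i.d. in $k$ for each fixed $j$. By translation invariance, $\mathbb{P}(q_j^{(MF1,k)}(z)>0)$ does not depend on $j$. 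Consequently
\[
\frac1K\sum_{k}\sum_{j\neq i}\mathbf 1_{\{q^{(MF1,k)}_j(z)>0\}}\,l(d(i,j))
\]
is an empirical average of i.i.d. bounded terms with mean $\mathbb{P}(q_1^{(MF1,1)}(z)>0)\,\xi_N$.

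\textbf{Step 2 (Gronwall estimate).} Set $\Delta^K(t)=\mathbb{E}\bigl[\sup_{s\le t}|q^{(K,l)}_i(s)-q^{(MF1,l)}_i(s)|\bigr]$; by exchangeability in $l$ and $i$ it does not depend on $(l,i)$. The arrivals cancel. For the departure integrals, compensate and use
\[
\bigl|\mathbf 1_{a>0}\mathbf 1_{u\le \mu}-\mathbf 1_{b>0}\mathbf 1_{u\le \nu}\bigr|\le \mathbf 1_{a\neq b}+|\mathbf 1_{u\le\mu}-\mathbf 1_{u\le\nu}| ,
\]
together with $\mathbf 1_{a\ne b}\le|a-b|$ for integers. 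This gives
\[
\Delta^K(t)\le \int_0^t\Bigl(\mu^{**}\Delta^K(s)+\mathbb{E}\bigl|\mu_i^K(s)-\mu^{MF1}(s)\bigr|\Bigr)ds .
\]
Since $g(x)=B\log_2(1+1/(1+x))$ is Lipschitz on $[0,\xi_N]$, we split $|\mu_i^K-\mu^{MF1}|$ into two pieces. The first is the replica-versus-McKean--Vlasov indicator difference, bounded by $\mathrm{Lip}(g)\max_j l\cdot\frac1K\sum_{k,j}\mathbf 1_{\{q^{(K,k)}_j\ne q^{(MF1,k)}_j\}}$, whose expectation is at most $C N\Delta^K(s)$. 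The second is the fluctuation of the i.i.d. empirical mean around its expectation, which by Step 1 and a variance bound is at most $C\,\xi_N/\sqrt K$. Gronwall then yields $\Delta^K(T)\le C_T N/\sqrt K\to0$; the Lipschitz constant of $\mu_i^K$ is uniform because all interference terms are bounded by $\xi_N$. Summing over $i\in[1,N]$ gives the desired convergence, and combining with Step 1 proves the statement. Propagation of chaos 1 is the single-coordinate case.

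\textbf{Main obstacle.} The delicate point is passing the supremum inside when bounding the stochastic integrals. We handle it by dominating $\sup_{s\le t}\bigl|\int_0^s\!\int\cdots\bigr|$ by the integral of the absolute value against $D_{l,i}$, whose expectation is computed through the compensator $du\,dz$; no martingale inequality is needed. The finite-mean assumption on the initial conditions is used only to guarantee that all quantities are integrable, and hence that Gronwall applies.
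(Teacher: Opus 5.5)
Your proposal is correct and follows essentially the same route as the paper: synchronous coupling with shared initial conditions and Poisson measures, the compensation formula, Lipschitz continuity of $g$, a split into a coupling-error term and an $O(1/\sqrt K)$ i.i.d.\ fluctuation term, Gr\"onwall, summation over $i$, and independence of the limit coordinates because $\mu^{MF1}$ is deterministic. The only difference is minor: the paper dominates $\sup_{s\le t}|q_i^{(K,1)}(s)-q_i^{(MF1,1)}(s)|$ directly by the number of marks falling between $\underline{\mu}_i$ and $\overline{\mu}_i$, because marks accepted by both thinnings never increase the discrepancy; this avoids your extra $\mu^{**}\Delta^K$ term from the emptiness-indicator mismatch and only changes the Gr\"onwall constant.
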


The two last theorems show that, for large enough number replicas, if one looks at the trajectory of the first replica for a finite horizon of time $T$, it looks likes $N$ independent trajectories of particles evolving according (7). 

\begin{lemma}
For $\lambda \in (0,\mu^*)$, there exists a unique solution in $[0,1]$ to
\begin{equation}
(E): \qquad
\frac{\lambda}{x}
=
B \log_2\!\Bigl(1+\frac{1}{1+x\,\xi^N}\Bigr).
\end{equation}
\end{lemma}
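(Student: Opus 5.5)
Rewrite the equation in a form without the singularity at $x=0$: on $(0,1]$, $(E)$ is equivalent to
\[
\Phi(x):=x\,g(x\xi_N)-\lambda=0,\qquad g(y):=B\log_2\!\Bigl(1+\frac{1}{1+y}\Bigr).
\]
The point $x=0$ is never a solution, since $\lambda/x$ is undefined there and $\Phi(0)=-\lambda<0$. The plan is to show that $\Phi$ changes sign exactly once on $(0,1]$. Existence will come from the intermediate value theorem, and uniqueness from monotonicity of $\Phi$.

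\textbf{Existence.} $\Phi$ is continuous on $[0,1]$ because $g$ is continuous on $\mathbb{R}_+$. At the endpoints,
\[
\Phi(0)=-\lambda<0,\qquad \Phi(1)=g(\xi_N)-\lambda=\mu^*-\lambda>0,
\]
the latter because $\lambda<\mu^*$ (with $L=1$ and $B/L=B$). Hence there is a root in $(0,1)$.

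\textbf{Uniqueness.} The main point is to show that $\Phi$ is strictly increasing on $[0,1]$, which requires controlling the competition between the increasing factor $x$ and the decreasing factor $g(x\xi_N)$. Compute
\[
\Phi'(x)=g(x\xi_N)+x\xi_N\,g'(x\xi_N).
\]
So it suffices to show $h(y):=g(y)+y\,g'(y)>0$ for all $y\ge0$, i.e.\ that $y\mapsto y\,g(y)$ is strictly increasing.

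Write $g(y)=\frac{B}{\ln 2}\bigl(\ln(2+y)-\ln(1+y)\bigr)$. Then
\[
g'(y)=-\frac{B}{\ln 2}\,\frac{1}{(1+y)(2+y)},
\]
and up to the positive factor $B/\ln 2$,
\[
h(y)=\ln\!\Bigl(1+\frac{1}{1+y}\Bigr)-\frac{y}{(1+y)(2+y)}.
\]
Set $u=\frac{1}{1+y}\in(0,1]$, so that $y=\frac{1-u}{u}$ and $2+y=\frac{1+u}{u}$. This gives
\[
\frac{y}{(1+y)(2+y)}=\frac{u(1-u)}{1+u}.
\]
Therefore $h>0$ reduces to the elementary inequality
\[
\ln(1+u)>\frac{u(1-u)}{1+u},\qquad u\in(0,1].
\]
This holds because $\ln(1+u)\ge \frac{u}{1+u}$ and $\frac{u}{1+u}\ge \frac{u(1-u)}{1+u}$. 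For $u\in(0,1)$ the second inequality is strict; at $u=1$ the left side is $\ln 2>0$ and the right side is $0$. Thus $\Phi'>0$, so $\Phi$ is strictly increasing, which gives uniqueness.

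This verification of $h>0$ is the only step I expect to be delicate. If it had failed, the fallback would be to use the convexity of $g$ stated in the remark, though convexity alone does not give monotonicity of $y\,g(y)$, so the explicit computation above is the route I would commit to.

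Finally, the statement is written with $\xi^N$; I read this as $\xi_N$. If one prefers the form $\lambda/x$ on the left, the root $x^*$ is the same since $x^*>0$.
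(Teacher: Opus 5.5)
Your proof is correct, but it takes a different route from the paper's.

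The paper keeps the equation in the form $f(x)=\lambda/x-g(x\xi_N)$ and writes $f'(x)=A(x)/\bigl(x^2(1+x\xi_N)(2+x\xi_N)\bigr)$, with quadratic numerator $A(x)=\xi_N(\frac{B}{\ln 2}-\lambda\xi_N)x^2-3\lambda\xi_N x-2\lambda$. Using $\lambda<\mu^*$, it shows that $A$ has one negative and one positive root. Hence $f$ is first decreasing and then increasing on $(0,\infty)$. With $f(0^+)=+\infty$ and $f(1)=\lambda-\mu^*<0$, this gives exactly one crossing, since the increasing branch stays below $f(1)<0$.

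You instead clear the denominator and prove a fact that does not depend on $\lambda$: $y\mapsto y\,g(y)$ is strictly increasing on $[0,\infty)$. You reduce this to $\ln(1+u)>u(1-u)/(1+u)$ on $(0,1]$, and your substitution and your use of $\ln(1+u)\ge u/(1+u)$ are both correct.

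Your route gives more than the lemma.
\begin{itemize}
\item The map $x\mapsto x\,g(x\xi_N)$ is a strictly increasing continuous bijection from $[0,1]$ onto $[0,\mu^*]$. So $x^*(\lambda)$ is simply its inverse, and is automatically continuous and increasing in $\lambda$.
\item Since $f(x)=-\Phi(x)/x$, the sign property $f(x)\ge 0 \iff x\le x^*$ on $(0,1]$ follows at once. The paper later relies on exactly this in the proof of Proposition 3.23, citing ``the property of $f$.''
\end{itemize}
The paper's argument works directly on the function as written. However, it needs the unimodality and root-location step, which it states only briefly, to reach the same sign information.
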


\begin{theorem}[Existence and uniqueness of the stationary distribution of the McKean--Vlasov process]
Assume that $\lambda < \mu^*$. Then $Q^{(MF1,1)}$ admits a unique stationary distribution, which is given by the product measure with marginals
\[
\pi^{MF1}_\lambda \sim \operatorname{Geom}\bigl(1 - x^*(\lambda)\bigr),
\]
where $x^*(\lambda)$ is the solution of~(8).
\end{theorem}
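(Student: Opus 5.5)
The plan is to reduce stationarity of the nonlinear process to a fixed-point problem for the constant service rate. Suppose $\pi$ is a stationary distribution of $Q^{(MF1,1)}$, meaning that starting from $\pi$ the law of $Q^{(MF1,1)}(t)$ is $\pi$ for all $t$.

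\textbf{Necessity.} Under stationarity, $p:=\mathbb P(q_1^{(MF1,1)}(z)>0)$ does not depend on $z$, so $\mu^{MF1}(z)\equiv \mu_p := B\log_2\bigl(1+\frac{1}{1+p\,\xi_N}\bigr)$ is constant. The equation (7) then becomes linear: each coordinate $q_i^{(MF1,1)}$ is an $M/M/1$ queue with arrival rate $\lambda$ and service rate $\mu_p$. The coordinates are driven by independent Poisson measures $A_{1,i},D_{1,i}$. With a product (or at least coordinatewise independent) initial law, the coordinates evolve independently. A stationary distribution of $N$ independent $M/M/1$ queues must be the product of their stationary marginals. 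That requires $\lambda<\mu_p$, and the marginal is then $\mathrm{Geom}(1-\rho)$ with $\rho=\lambda/\mu_p$. Under this law $\mathbb P(q>0)=\rho$, so consistency forces $p=\lambda/\mu_p$, i.e.\ $\lambda/p=\mu_p$. This is exactly equation (E) with $x=p$.

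Since $p\in[0,1]$, the preceding Lemma gives that $p=x^*(\lambda)$ is the unique solution. Note that $p=0$ is impossible when $\lambda>0$, because an empty queue receives arrivals. Hence $\pi$ is the product of $\mathrm{Geom}(1-x^*(\lambda))$ laws, which gives uniqueness.

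\textbf{Existence.} Conversely, start (7) from the product of $\mathrm{Geom}(1-x^*)$ marginals. Define the candidate process $\bar Q$ as $N$ independent $M/M/1$ queues with constant service rate $\mu_{x^*}=\lambda/x^*$, driven by the same Poisson measures. Then:
\begin{enumerate}
\item $\lambda<\mu_{x^*}$ because $x^*<1$, which holds since $\lambda<\mu^*=\mu_1$ and $\mu_\cdot$ is monotone.
\item The geometric product law is stationary for $\bar Q$.
\item Consequently $\mathbb P(\bar q_1(z)>0)=x^*$ for all $z$, so the rate plugged into (7) equals $\mu_{x^*}$.
\end{enumerate}
Thus $\bar Q$ solves (7), and by the pathwise uniqueness in the Well-posedness Theorem it is \emph{the} solution. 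Its law is therefore invariant.

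\textbf{Main obstacle.} The delicate point is the necessity direction. The definition of "stationary distribution" for a McKean--Vlasov process must be fixed precisely: the law at time $t$ equals the initial law. One must also check that the constant-in-time marginal really freezes $\mu^{MF1}$, and handle an initial law that is not a product. For the latter I would argue that the coordinate marginals are each stationary for a fixed $M/M/1$ queue, hence geometric. Invariance of the joint law under independent dynamics, together with ergodicity (convergence to the product geometric law from any start), then forces the joint law to be the product.
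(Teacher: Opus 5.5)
Your proposal is correct and follows the same analysis--synthesis route as the paper: stationarity freezes $\mu^{MF1}$ and reduces (7) to $N$ independent $M/M/1$ queues, whose invariant law forces the fixed-point equation (8); conversely, the product geometric law with parameter $x^*(\lambda)$ is invariant for the frozen $M/M/1$ dynamics and therefore solves (7). Your write-up is more careful than the paper's sketch, notably in excluding $p=0$, checking $x^*<1$, and handling non-product initial laws through uniqueness of the invariant law of the irreducible product chain.
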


This illustrates the usefulness of the McKean–Vlasov approximation: stationary performance metrics can be obtained by solving the scalar fixed-point equation (8).

\begin{remark}
    Observe that the Mean-field limit arising here is of a different nature from the one usually used to approximate such systems: the limit remains stochastic, instead of being deterministic and governed by an ODE as in \cite{Bordenave2009}, \cite{mitzenmacher2001power}, or \cite{10.1145/3154491}. This is a characteristic feature of replica mean-field models.
\end{remark}

\subsubsection{Mean-Field 2}

Having derived a first mean-field approximation in order to obtain a tractable description of the original coupled system, we now turn to an alternative approach. 
Although this first approximation is natural, in that it replaces random quantities by their expectations, it may also be viewed as a rather crude simplification. 
A more refined approximation consists in preserving the intrinsic randomness of the system while relaxing its dependence structure, namely by assuming full independence between the components.

To this end, we introduce a second limiting system, which also exhibits a McKean--Vlasov-type nonlinearity. 
We show that it can be qualitatively compared with the first mean-field approximation, although we are still unable to relate it rigorously to the original system, except through numerical simulations. 
Numerically, this new approximation appears to perform better in estimating the busy probability, but worse than the first one in predicting the mean delay and the congestion level. 
At this stage, this phenomenon remains only partially understood.

We now describe this new approximation:
\begin{definition}[Random environment system]
Fix \(p\in[0,1]\). For each \(i\in\{1,\dots,N\}\), let
\(D_i^p\) be a Poisson random measure on
\[
\mathbb R_+\times\mathbb R_+\times\{0,1\}^{N-1}
\]
with intensity
\[
\mathrm du\,\mathrm dz\,\nu_p(\mathrm d\theta),
\]
where \(\nu_p\) is the product measure of \(N-1\) independent Bernoulli
random variables with parameter \(p\).

For \(\theta=(\theta_m)_{m\neq i}\in\{0,1\}^{N-1}\), define
\[
\mu_i^p(\theta)
=
B\log_2\left(
1+
\frac{1}{1+\sum_{m\neq i}\theta_m l(d(i,m))}
\right).
\]
The random-environment system is defined, for each
\(i\in\{1,\dots,N\}\), by
\[
q_i^p(t)
=
q_i^p(0)
+
A_i[0,t]
-
\int_{\{0,1\}^{N-1}}\int_{\mathbb R_+}\int_0^t
\mathbf 1_{\{u\leq \mu_i^p(\theta)\}}
\mathbf 1_{\{q_i^p(z^-)>0\}}
\,D_i^p(\mathrm du,\mathrm dz,\mathrm d\theta).
\]
\end{definition}

The process \(Q^p\) is well defined as a queueing system evolving in a stationary autonomous random environment. 
Classical criteria for the positive recurrence of such systems are available in \cite{BaccelliMakowski1986}. 
In particular, if
\[
\lambda < \E[\mu_1^p(\theta)],
\]
then the queueing system \(Q^p\) is stable.

Since the purpose of this auxiliary system is to analyze the original model in its stationary regime, we may restrict attention to the case \(\lambda < \mu^*\). 
Now, by definition,
\[
\mu^* = \mu_i^1(1),
\]
and, for every \(p \in [0,1]\),
\[
\E[\mu_1^p(\theta)] \ge \mu_i^1(1) = \mu^*.
\]
Therefore, whenever $\lambda < \mu^*,$ it follows that
\[
\lambda < \E[\mu_1^p(\theta)],
\]
for all \(p \in [0,1]\), and hence the auxiliary queuing system \(Q^p\) is stable for every \(p\) and has a unique stationary distribution for every $p$.

Now we want to choose $p$ such that it mimics the true system. To do so, we try $p=\mathbb{P}(q^p>0)$. This introduces a non linearity and one has to be sure that we can find such a $p$. 

\begin{proposition}
    There exists $p$ such that \begin{equation}
        p=\mathbb{P}(q^p>0)
    \end{equation}.
\end{proposition}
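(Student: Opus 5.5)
We will obtain the existence of a fixed point from the intermediate value theorem applied to the map
\[
\Phi:[0,1]\to[0,1],\qquad \Phi(p)=\mathbb{P}_{\pi^p}(q_1^p>0),
\]
where $\pi^p$ denotes the unique stationary distribution of $Q^p$. This distribution exists for every $p\in[0,1]$ under $\lambda<\mu^*$, as discussed just before the statement. By symmetry the marginal does not depend on $i$. The first step is to identify $\Phi$ explicitly.

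For fixed $p$, the environment is i.i.d. across departure epochs: each atom of $D_1^p$ carries an independent mark $\theta\sim\nu_p$. Thinning the Poisson measure, the potential departures of queue $1$ form a Poisson process of rate
\[
m(p):=\E\bigl[\mu_1^p(\theta)\bigr]=\sum_{\theta\in\{0,1\}^{N-1}}\mu_1^p(\theta)\prod_{m\neq 1}p^{\theta_m}(1-p)^{1-\theta_m},
\]
independent of $A_1$. Indeed, $\int \mathbf 1_{\{u\le \mu_1^p(\theta)\}}\,\mathrm du\,\nu_p(\mathrm d\theta)=m(p)$, and the restriction of $D_1^p$ to the set $\{u\le\mu_1^p(\theta)\}$, projected onto the $z$ coordinate, is Poisson. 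Hence $q_1^p$ is exactly an $M/M/1$ queue with arrival rate $\lambda$ and service rate $m(p)$. Since $m(p)\ge\mu^*>\lambda$, its stationary law is $\operatorname{Geom}(1-\lambda/m(p))$, and
\[
\Phi(p)=\frac{\lambda}{m(p)} .
\]
If one prefers not to rely on this thinning identification, one can instead invoke the stationary-environment results of \cite{BaccelliMakowski1986}. The explicit formula is cleaner, however, and it is the route I would take.

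The second step is continuity. The function $m(p)$ is a polynomial in $p$, hence continuous on $[0,1]$. It is also bounded below by $\mu^*>0$, so $\Phi$ is continuous on $[0,1]$ with values in $(0,\lambda/\mu^*)\subset(0,1)$. The third step concludes: the function $h(p)=\Phi(p)-p$ satisfies $h(0)=\Phi(0)>0$ and $h(1)=\lambda/\mu^*-1<0$, so the intermediate value theorem gives $p\in(0,1)$ with $p=\Phi(p)=\mathbb{P}(q^p>0)$.

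Uniqueness is not claimed, but it would follow as well. Each term $\mu_1^p(\theta)$ is nonincreasing in each $\theta_m$, and $\nu_p$ is stochastically increasing in $p$. Hence $m$ is nonincreasing and $\Phi$ is nondecreasing in $p$. Combined with the identity $\Phi(p)=p\iff \lambda/p=m(p)$, this gives an equation analogous to (8), with $\E$ taken outside the logarithm. The main obstacle is the first step: rigorously justifying that the queue in a Poisson-marked environment is a genuine $M/M/1$ queue with rate $m(p)$. This requires checking that the $\theta$-marks do not correlate with the queue state. That holds because the marks are attached to the atoms of $D_1^p$ independently of the past, so the jump intensity of $q_1^p$ conditional on $\mathcal F_{t^-}$ is $m(p)\mathbf 1_{\{q_1^p(t^-)>0\}}$, which identifies the generator.
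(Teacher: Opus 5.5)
Your existence argument is correct and is essentially the paper's proof. The paper also reduces to $\mathbb{P}(q^p>0)=\lambda/\overline\mu_i(p)$ and applies the intermediate value theorem to $\lambda/\overline\mu_i(p)-p$, with the same endpoint signs. The only difference is how the formula is obtained: the paper uses rate conservation plus independence of the marks from the queue state, while your Poisson-thinning identification of $q^p$ as an exact $M/M/1(\lambda,m(p))$ queue gives the full geometric law.

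Your closing remark that uniqueness ``would follow as well'' is not justified. Both $p\mapsto\lambda/p$ and $p\mapsto m(p)$ are decreasing, so monotonicity of $m$ (equivalently, of $\Phi$) gives no single-crossing property, and a nondecreasing $\Phi$ can meet the diagonal several times. You would need $p\mapsto p\,m(p)$ to cross the level $\lambda$ only once; the paper explicitly leaves uniqueness open.
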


\begin{remark}
    Here we only prove the existence of such a $p$; we did not find conditions with respect to $\lambda$ to show it is unique even if it seems to be unique numerically. 
\end{remark}

\begin{definition}[McKean--Vlasov approximation of type 2]

Let \(p^\star\in[0,1]\) be a solution of the fixed-point equation
\[
p^\star=\mathbb P(q^{p^\star}>0),
\]
where \(q^p\) denotes the stationary random-environment queue defined above.

For each \(i\in\{1,\dots,N\}\), let \(D_i^p\) be a Poisson random measure on
\[
\mathbb R_+\times \mathbb R_+\times \{0,1\}^{N-1}
\]
with intensity
\[
\mathrm du\,\mathrm dz\,\nu_{
p}(\mathrm d\theta),
\]
where \(\nu_{p}\) is the product measure of \(N-1\) independent Bernoulli random
variables with parameter \(p\).

For \(\theta=(\theta_m)_{m\neq i}\in\{0,1\}^{N-1}\), define
\[
\mu_i^{\mathrm{MF2}}(\theta)
=
B\log_2\left(
1+
\frac{1}{1+\sum_{m\neq i}\theta_m l(d(i,m))}
\right).
\]
The McKean--Vlasov approximation of type 2 is defined by
\begin{equation}
q_i^{\mathrm{MF2}}(t)
=
q_i^{\mathrm{MF2}}(0)
+
A_i[0,t]
-
\int_{\{0,1\}^{N-1}}\int_{\mathbb R_+}\int_0^t
\mathbf 1_{\{u\leq \mu_i^{\mathrm{MF2}}(\theta)\}}
\mathbf 1_{\{q_i^{\mathrm{MF2}}(z^-)>0\}}
\,D_i^{p^\star}(\mathrm du,\mathrm dz,\mathrm d\theta).
\end{equation}
\end{definition}

\begin{remark}
    The previous analysis, gives us strong existence of (10) but not the path-wise uniqueness since we have no guarantees for the uniqueness of $p$. When we simulate (10), we take the solution given by the numerical solver of the fixed point: \begin{align}
    p&=\frac{\lambda}{\sum_{k=0}^{N-1} p^k (1-p)^{N-1-k} \sum_{j=1}^{\binom{N-1}{k}}  w_j^k}.
\end{align}
    
Here, \(w_j^k\) denotes the \(j\)-th distinct value in the support of $\mu_1^{MF2}$, corresponding to the configuration where exactly \(k\) out of \(N-1\) Bernoulli random variables are equal to 1. 
However all the following results hold for the potential family of solutions of (10). We will now consider only the bigger one to ease the notation. 
\end{remark}
We denote $\pi^{MF2}_\lambda$ the stationary distribution of (10), which exists and is unique based on the discussion after definition 3.18. 
We can compare the two approximations: 

\begin{proposition}[MF2 is less congested than MF1]
 For $\lambda<\mu^*$,
\begin{align}
    \pi_{\lambda}^{\mathrm{MF2}}(q > 0) \leq \pi_{\lambda}^{\mathrm{MF1}}(q > 0).
\end{align}

\end{proposition}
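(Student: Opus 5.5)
The plan is to compute both busy probabilities explicitly as solutions of scalar equations, and then compare these equations using the convexity of the rate function $g(y)=B\log_2\bigl(1+\frac{1}{1+y}\bigr)$ (Jensen) together with the uniqueness statement of Lemma~3.10.

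\textbf{Step 1: MF1.} By Theorem~3.11, $\pi^{\mathrm{MF1}}_\lambda$ has $\operatorname{Geom}(1-x^*)$ marginals. Hence $\pi^{\mathrm{MF1}}_\lambda(q>0)=x^*$, where $x^*\in[0,1]$ is the unique solution of (8). Equivalently, $x^*$ is the unique root in $[0,1]$ of $h(x)=\lambda$, where
\[
h(x):=x\,g(x\xi_N).
\]

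\textbf{Step 2: MF2.} In the random-environment system $Q^p$, the marks $\theta$ of $D_i^p$ are i.i.d.\ with law $\nu_p$ and independent of the past. Integrating out $u$ and $\theta$ therefore shows that, whenever $q_i^p>0$, departures occur at the constant rate
\[
m(p):=\E\bigl[\mu_1^p(\theta)\bigr]=\E\Bigl[g\Bigl(\textstyle\sum_{m\neq 1}\theta_m\, l(d(1,m))\Bigr)\Bigr].
\]
I would make this precise by noting that the thinned measure $\int \mathbf 1_{\{u\le \mu_i^p(\theta)\}}D_i^p(\mathrm du,\mathrm dz,\mathrm d\theta)$ is a Poisson process of intensity $m(p)\,\mathrm dz$. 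Consequently $q_i^p$ is an $M/M/1$ queue with arrival rate $\lambda$ and service rate $m(p)$, and its stationary busy probability is $\lambda/m(p)$. This quantity is at most $1$ because $m(p)\ge\mu^*>\lambda$. The fixed point therefore reads $p^\star=\lambda/m(p^\star)$, which is exactly formula (11), and $\pi^{\mathrm{MF2}}_\lambda(q>0)=p^\star$.

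\textbf{Step 3: Jensen.} Since $g$ is convex and $\E\bigl[\sum_{m\neq1}\theta_m l(d(1,m))\bigr]=p\,\xi_N$, we get $m(p)\ge g(p\,\xi_N)$. Applying this at $p=p^\star$ gives
\[
\lambda=p^\star m(p^\star)\ge p^\star g(p^\star\xi_N)=h(p^\star).
\]

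\textbf{Step 4: conclusion.} The function $h$ is continuous on $[0,1]$, with $h(0)=0<\lambda$ and $h(1)=g(\xi_N)=\mu^*>\lambda$. By Lemma~3.10, $x^*$ is the only point of $[0,1]$ where $h=\lambda$. By the intermediate value theorem, $h-\lambda$ therefore has constant sign on $(x^*,1]$, and this sign is positive because $h(1)>\lambda$. If we had $p^\star>x^*$, then $h(p^\star)>\lambda$, which contradicts Step~3. Hence $p^\star\le x^*$, which is the claim. The argument uses only $\lambda=p^\star m(p^\star)$, so it applies to every solution of the MF2 fixed point, not just the largest one.

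\textbf{Main obstacle.} I expect the main obstacle to be Step~2: rigorously reducing the marked-Poisson dynamics to an $M/M/1$ queue with rate $m(p)$. The key point is that $\theta$ is resampled independently at each potential service epoch, so the environment is i.i.d.\ rather than a persistent process. Once this reduction is in place, the rest is Jensen plus the one-dimensional uniqueness argument above.
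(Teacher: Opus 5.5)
Your proof is correct and follows essentially the same route as the paper. The paper likewise gets $\mathbb{P}(q^{\mathrm{MF2}}>0)=\lambda/\E[\mu^{\mathrm{MF2}}]$ from the balance equation and the autonomy of the environment, then applies Jensen's inequality to the convex rate $g$ to obtain $\mathbb{P}(q^{\mathrm{MF2}}>0)\le \lambda/g\bigl(\xi_N\,\mathbb{P}(q^{\mathrm{MF2}}>0)\bigr)$, and concludes from the sign structure of $f(x)=\lambda/x-g(x\xi_N)$ (your $h(x)-\lambda$ divided by $x$) established in the lemma on equation (8). Your explicit thinning argument identifying $q^p$ as an $M/M/1$ queue with rate $m(p)$, and your intermediate-value justification of the final sign step, make rigorous two points that the paper only asserts.
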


This new approximation, however, comes with a drawback: it is less tractable than the first one, since the resulting model is a queueing system evolving in an autonomous random environment. 
Although the stationary distribution of such systems can in principle be analyzed in closed form (see \cite{neuts1994matrix,ZHU199411}), the corresponding expressions are often intricate, which makes the derivation of performance metrics more difficult.

We therefore introduce a further approximation, which consists in replacing the random environment by its average effect. 
This approximation is motivated by the qualitative comparison results established in \cite{BaccelliMakowski1986}. 
It restores tractability, since the resulting model again consists of \(M/M/1\) queues, now with service rate \(\E[\mu^{\mathrm{MF2}}]\).

\begin{definition}[Average McKean-Vlasov 2]
Let \(p^\star\in[0,1]\) be a solution of the MF2 self-consistency equation: $p^\star=\mathbb P(q^{p^\star}>0)$, and let
\(\nu_{p^\star}\) denote the law of the corresponding Bernoulli random environment.
For \(i\in\{1,\dots,N\}\), define
\[
\overline\mu_i(p^\star)
:=
\mathbb E_{\nu_{p^\star}}\left[
B\log_2\left(
1+
\frac{1}{1+\sum_{m\neq i} T_{i,m} l(d(i,m))}
\right)
\right],
\]
where the random variables \((T_{i,m})_{m\neq i}\) are independent Bernoulli
random variables with parameter \(p^\star\).

The average second mean-field approximation
\[
Q^{\mathrm{AMF2}}=(q_i^{\mathrm{AMF2}})_{i=1}^N
\]
is defined, for each \(i\in\{1,\dots,N\}\), by
\begin{align}
q_i^{\mathrm{AMF2}}(t)
&=
q_i^{\mathrm{AMF2}}(0)
+
A_i[0,t]
-
\int_{\mathbb R_+}\int_0^t
\mathbf 1_{\{q_i^{\mathrm{AMF2}}(z^-)>0\}}
\mathbf 1_{\{u\leq \overline\mu_i(p^\star)\}}
\,D_i(\mathrm du,\mathrm dz).
\label{eq:AMF2}
\end{align}
\end{definition}

Now we present the different comparison results between the different approximation systems. 

\begin{proposition}[Convex ordering of $Q^{MF2}$ and $Q^{AMF2}$]
Assume that $\lambda < \mu^*$. Suppose that, for all $i \in [1,N]$,
\[
q^{AMF2}_i(0) \sim q^{MF2}_i(0)
\quad \text{and} \quad
\mathbb{E}\bigl[q^{AMF2}_i(0)\bigr] < \infty.
\]
Then
\[
Q^{MF2} \geq_{icx} Q^{AMF2}.
\]
\end{proposition}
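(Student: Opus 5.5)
Since both $Q^{MF2}$ and $Q^{AMF2}$ have independent coordinates (each $q_i$ is driven by its own $A_i$ and $D_i^{p^\star}$, resp.\ $D_i$, with independent initial conditions), and the increasing convex order is preserved under products of independent components, it suffices to prove the one-dimensional statement $q_i^{MF2}(t)\geq_{icx} q_i^{AMF2}(t)$ for each fixed $i$ and $t$. We plan to follow the comparison approach of \cite{BaccelliMakowski1986} for queues in a random environment, adapted to the present SDE representation.

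\textbf{Step 1: reduction to a single queue and a common clock.} By symmetry fix $i$. Conditionally on the environment, $q_i^{MF2}$ is a birth--death chain whose service rate at each potential service epoch is a random mark $\mu_i^{MF2}(\theta)$, with $\theta\sim\nu_{p^\star}$ drawn independently at each atom. Since $\mu_i^{MF2}\le B$, we uniformize: thin $D_i^{p^\star}$ to a Poisson process of rate $B$ of potential service epochs, each accepted with probability $\mu_i^{MF2}(\theta)/B$. For $Q^{AMF2}$ we do the same, with acceptance probability $\overline\mu_i(p^\star)/B$. Both processes are then embedded in the same superposed Poisson stream of rate $\lambda+B$. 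At each epoch the queue either increments, or decrements if positive, or stays put. We therefore obtain a Markov chain recursion $X_{n+1}=\phi(X_n,\zeta_n)$ with $\phi(x,\zeta)=(x+\zeta)^+$ and $\zeta_n\in\{-1,0,1\}$.

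\textbf{Step 2: the one-step kernels are icx-ordered.} The key observation is that, after integrating out the mark $\theta$, the law of the increment in MF2 is \emph{exactly} the same as in AMF2, because the decrement probability in MF2 is $\E[\mu_i^{MF2}(\theta)]/(\lambda+B)=\overline\mu_i(p^\star)/(\lambda+B)$. Note, though, that $\theta$ is resampled independently at each atom. Hence, under this Poissonian-mark construction, the two one-dimensional processes actually coincide in law, and icx ordering (with equality) follows trivially once the initial laws agree. If instead the intended MF2 has a persistent environment (a Markov-modulated $\theta$), the increments are a mixture. In that case we would use that $x\mapsto \E f(\phi(x,\zeta))$ is increasing and convex for $f$ increasing convex, which is the standard property of Lindley-type maps. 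We would then apply a conditional Jensen argument: conditionally on the environment path, the number of departures is a mixed Poisson count with random intensity $\int\mu\,dz$, which dominates the Poisson count with the averaged intensity in convex order. This yields $\geq_{icx}$ by induction over the epochs.

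\textbf{Step 3: induction and passage to continuous time.} Starting from equal initial laws with finite mean, we propagate the ordering $X_n^{MF2}\geq_{icx}X_n^{AMF2}$ through the recursion, using closure of $\leq_{icx}$ under increasing convex maps and under mixtures. We then condition on the number of epochs in $[0,t]$, which is Poisson and common to both systems, to deduce the ordering at each time $t$. Finite means guarantee integrability throughout. Finally, independence across $i$ gives the vector ordering, and letting $t\to\infty$ (using the stability for $\lambda<\mu^*$ established above) gives the same ordering for the stationary laws.

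The main obstacle is Step 2 in the modulated case: justifying the convex-order comparison of the departure count with that of its averaged version, through the Lindley map whose reflection at $0$ is not linear. I would first try the Baccelli--Makowski argument of writing $q(t)=\sup_{s\le t}(\text{net input over }[s,t])$ and applying Jensen conditionally on the arrivals.
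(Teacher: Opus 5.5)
Your proof is correct for MF2 as the paper actually defines it, and it takes a different and sharper route. The paper's proof is a one-line appeal to the Baccelli--Makowski comparison between a queue in a stationary random environment and the queue with averaged rates.

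Your key observation is that $D_i^{p^\star}$ has product intensity $\mathrm du\,\mathrm dz\,\nu_{p^\star}(\mathrm d\theta)$, so the marks are i.i.d.\ and independent of the past. By the restriction and mapping theorems, the accepted epochs $\{z : u\le \mu_i^{\mathrm{MF2}}(\theta)\}$ form a Poisson process of rate $\overline\mu_i(p^\star)$, independent of $A_i$ and of the initial state. Hence $q_i^{\mathrm{MF2}}$ and $q_i^{\mathrm{AMF2}}$ are the same functional of equally distributed driving data and coincide in law; uniformization is not even needed. The icx relation therefore holds with equality, in both directions, at every $t$. It also holds in stationarity, where both are the M/M/1 law with load $\lambda/\overline\mu_i(p^\star)\le\lambda/\mu^*<1$. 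This is exactly the independence the paper itself invokes to write $\mathbb P(q^{\mathrm{MF2}}>0)=\lambda/\mathbb E[\mu^{\mathrm{MF2}}]$ when proving existence of $p^\star$ and that MF2 is less congested than MF1.

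Your route gives a self-contained, elementary proof and shows that the proposition has no content for the model as written. The citation only becomes substantive for a persistent (e.g.\ Markov-modulated) environment, which the paper's remarks on intricate matrix-analytic stationary laws seem to intend. For that case your Step~2 hedge is not a proof. Once the increments $\zeta_n$ are correlated through the environment, you can no longer average the kernel step by step, and closure under mixtures does not order the paths. A process-level argument is needed, for instance via the Loynes-type representation of $q(t)$ as a decreasing convex functional of the whole service process. That is what the cited Baccelli--Makowski result is meant to supply, so keep that part as an aside.

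One point in your reduction needs fixing. You assert that the initial conditions are independent across $i$, but neither the definitions nor the hypothesis provides this; the hypothesis only matches marginals. You need either independent initial coordinates in both systems, or equal joint initial laws. In the latter case equality in law of the vector processes is immediate, since given the initial state the coordinates are driven by independent noises. With matching marginals only, the vector ordering can fail already at $t=0$. Take $N=2$ with $\mathrm{Bernoulli}(1/2)$ marginals, $Q^{\mathrm{MF2}}(0)$ with independent coordinates, and $Q^{\mathrm{AMF2}}(0)=(X,X)$. The increasing convex function $f(x)=(x_1+x_2-1)^+$ then gives $\mathbb E f(Q^{\mathrm{MF2}}(0))=1/4<1/2=\mathbb E f(Q^{\mathrm{AMF2}}(0))$. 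This defect disappears in stationarity, where both vectors converge to the same product law regardless of the initial coupling.
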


\begin{proposition}[Stochastic ordering of $Q^{AMF2}$ and $Q^{(MF1,1)}$]
Assume that $\lambda < \mu^*$. Suppose that, for all $i \in [1,N]$,
\[
q^{AMF2}_i(0) \sim q^{(MF1,1)}_i(0)
\quad \text{and} \quad
\mathbb{E}\bigl[q^{AMF2}_i(0)\bigr] < \infty.
\]
Then
\[
Q^{AMF2} \leq_{st} Q^{(MF1,1)}.
\]
\end{proposition}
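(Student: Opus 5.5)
The plan is a pathwise coupling argument. The process $Q^{\mathrm{AMF2}}$ is a family of independent $M/M/1$ queues with constant service rate $\overline\mu_i(p^\star)$. The process $Q^{(MF1,1)}$ is a family of independent queues with deterministic, time-varying service rate
\[
\mu^{MF1}(t)=g\bigl(p(t)\,\xi_N\bigr),\qquad p(t):=\mathbb P\bigl(q_1^{(MF1,1)}(t)>0\bigr),\qquad g(x)=B\log_2\Bigl(1+\tfrac{1}{1+x}\Bigr).
\]

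\textbf{Step 1: coupling.} I would build both systems on one probability space, using the same arrival processes $A_i$, the same service measures $D_i$ and the same initial conditions. This is allowed because the initial laws coincide. Then I would prove the deterministic comparison: if
\[
\overline\mu_i(p^\star)\ge \mu^{MF1}(t)\quad\text{for all } t\in[0,T],
\]
then $q_i^{\mathrm{AMF2}}(t)\le q_i^{(MF1,1)}(t)$ for all $t\le T$, almost surely. The argument inspects the jump times of $A_i$ and $D_i$, which are a.s. locally finite because the rates are bounded by $\mu^{**}$.
\begin{itemize}
\item Arrivals move both queues up together.
\item A potential departure atom $(u,z)$ removes a job from the larger queue only if $u\le \mu^{MF1}(z^-)\le \overline\mu_i(p^\star)$, and in that case it also removes one from the smaller queue, as long as that queue is nonempty.
\item The only way the order could break is equality followed by a departure from MF1 alone, and the rate inequality rules this out.
\end{itemize}
Stochastic ordering of the whole vector then follows, since the coupling is componentwise and the components are independent in both systems.

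\textbf{Step 2: comparing the rates.} I would use Jensen's inequality with the convexity of $g$ (the remark on general $g$) to get
\[
\overline\mu_i(p^\star)=\mathbb E_{\nu_{p^\star}}\Bigl[g\Bigl(\textstyle\sum_{m\ne i}T_{i,m}\,l(d(i,m))\Bigr)\Bigr]\ \ge\ g(p^\star\xi_N).
\]
Since $g$ is nonincreasing, it then suffices to show $p(t)\ge p^\star$ on $[0,T]$.

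In the stationary regime this reduces to earlier results. By the stationary-distribution theorem for $Q^{(MF1,1)}$, $p(t)\equiv x^*(\lambda)$. The proposition ``MF2 is less congested than MF1'' gives $p^\star=\pi^{\mathrm{MF2}}_\lambda(q>0)\le x^*(\lambda)$. This yields $\overline\mu_i(p^\star)\ge g(x^*\xi_N)=\mu^{MF1}$, so the stationary geometric laws are ordered: $\operatorname{Geom}(1-\lambda/\overline\mu)\le_{st}\operatorname{Geom}(1-x^*)$.

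\textbf{Step 3: the main obstacle, general initial conditions.} Here $p(t)$ may drop below $p^\star$, for example when starting from empty queues. Two routes seem possible.
\begin{itemize}
\item First I would try a bootstrap. Let $\tau$ be the first time $p(t)<p^\star$. Before $\tau$ the coupling of Step 1 gives $q^{\mathrm{AMF2}}\le q^{MF1}$, so $p(t)\ge \mathbb P\bigl(q^{\mathrm{AMF2}}(t)>0\bigr)$. I would then try to show that this lower bound keeps $p$ from crossing $p^\star$, using the continuity of $p$ from the well-posedness theorem.
\item If that fails, I would restrict the statement to initial laws under which $p(0)\ge p^\star$ is propagated, such as stationary or stochastically larger initial data. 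The ordering would then hold under that additional hypothesis.
\end{itemize}
I expect this control of $p(t)$ to be the delicate step; Steps 1 and 2 are routine.
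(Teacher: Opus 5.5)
Your Steps 1 and 2 are exactly the paper's proof: Jensen gives $\overline\mu_i(p^\star)\ge g(p^\star\xi_N)$, Proposition 3.23 gives $p^\star\le x^*$, and the monotone coupling of Lemma A.3 finishes. You are also right that this one-line proof silently uses $\mu^{MF1}\equiv g(x^*\xi_N)$, i.e.\ that $Q^{(MF1,1)}$ is stationary. But the obstacle in Step 3 is not merely delicate. Under the stated hypotheses the proposition is false, so neither your bootstrap nor any other argument can close it.

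Take $q_i^{AMF2}(0)=q_i^{(MF1,1)}(0)=0$. Then $p(t)\le 1-e^{-\lambda t}$, so $\mu^{MF1}(t)=g(p(t)\xi_N)\to g(0)=B$ as $t\downarrow 0$. On the other hand, $\overline\mu_i(p^\star)<B$, because $p^\star=\lambda/\overline\mu_i(p^\star)>0$ and $g$ is strictly decreasing. Hence $\mu^{MF1}>\overline\mu_i(p^\star)$ on some interval $[0,t_0]$. Your Step 1 coupling, with the roles exchanged, then gives $q_i^{(MF1,1)}\le q_i^{AMF2}$ on $[0,t_0]$. The inequality is strict at any $t\le t_0$ on the following event of positive probability: $[0,t]$ contains exactly one arrival and exactly one service mark with $u\le B$, and that mark sits at some $(u,z)$ after the arrival with $\overline\mu_i(p^\star)<u\le\mu^{MF1}(z)$. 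Therefore $\mathbb P(q_i^{AMF2}(t)\ge 1)>\mathbb P(q_i^{(MF1,1)}(t)\ge 1)$, so even the ordering of the one-time marginals fails. Your bootstrap is in fact vacuous here, since $\tau=0$ as soon as $p(0)<p^\star$.

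So your second route is the right one, but it has to replace the hypothesis of the statement rather than serve as a fallback.

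\begin{itemize}
\item If $\le_{st}$ is read for the stationary laws, the claim is just Proposition 3.23 again. Indeed $\pi^{AMF2}=\mathrm{Geom}(1-p^\star)$, because $\lambda/\overline\mu_i(p^\star)=p^\star$, and $\pi^{MF1}_\lambda=\mathrm{Geom}(1-x^*)$.
\item For the process-level statement, assume the common initial law stochastically dominates $\mathrm{Geom}(1-x^*)$, and prove $p(t)\ge x^*$ for all $t$ as follows.
\begin{itemize}
\item The map $\Phi$ from the well-posedness proof is monotone: $m_1\ge m_2$ gives $\mu^{m_1}\le\mu^{m_2}$, hence $q^{m_1}\ge q^{m_2}$ under your coupling.
\item $\Phi(x^*)\ge x^*$. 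To see this, compare $q^{x^*}$ with the $M/M/1(\lambda,g(x^*\xi_N))$ queue started from $\mathrm{Geom}(1-x^*)$. That queue is stationary with busy probability $x^*$, by the fixed-point equation defining $x^*$.
\item Consequently the Picard iterates started from $m\equiv x^*$ stay above $x^*$, and so does their limit, the fixed point on $[0,\widetilde T]$.
\item By the same coupling, the law at time $\widetilde T$ still dominates $\mathrm{Geom}(1-x^*)$, so you can restart on the next interval.
\end{itemize}
This gives $\mu^{MF1}(t)\le g(x^*\xi_N)\le g(p^\star\xi_N)\le\overline\mu_i(p^\star)$ for all $t$, and Step 1 finishes the proof.
\end{itemize}
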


\begin{conjecture}[Comparison between the True system and the Mean-Field approximations]
       For $\lambda < \mu^*$, the stationary busy probability of the original system lies between those predicted by the two mean-field models:
\[
\pi_\lambda^{\mathrm{MF2}}(q > 0)
\leq
\pi_\lambda(q > 0)
\leq
\pi_\lambda^{\mathrm{MF1}}(q > 0).
\]
In particular, Mean-Field 1 overestimates the busy probability, whereas Mean-Field 2 underestimates it.

\end{conjecture}

\subsubsection{Numerical Results}

We first illustrate the theoretical result of Theorem 3.1 through numerical simulations of the dynamics, using the parameter values $N=5$, $B=10$, $l=\frac{1}{r^2}$, $\mu^*=2.51$, $L=1$:

\begin{figure}[H]
    \centering
    \begin{subfigure}{0.48\textwidth}
        \centering
        \includegraphics[width=\textwidth]{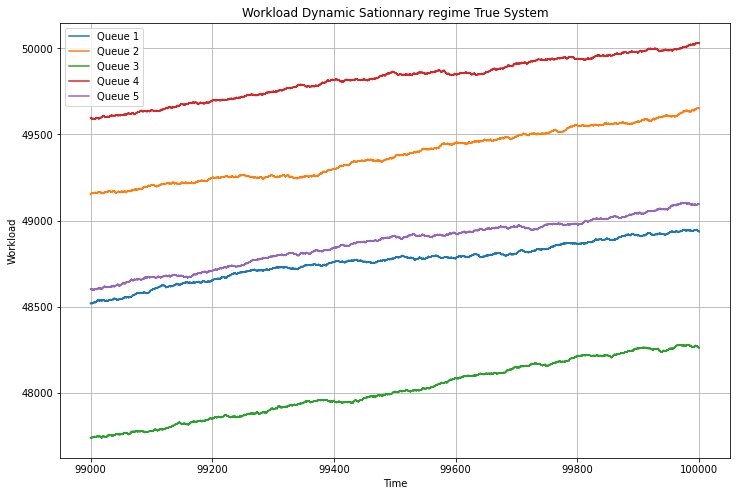}
        \caption{$\lambda=3$}
        \label{true_dynamic}
    \end{subfigure}
    \hfill
    \begin{subfigure}{0.48\textwidth}
        \centering
        \includegraphics[width=\textwidth]{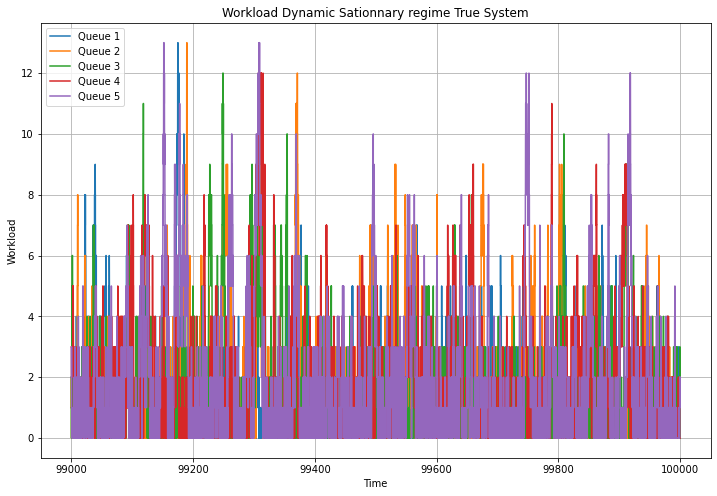}
        \caption{$\lambda=2$}
        \label{fig:wasserstein_distance}
    \end{subfigure}
    \caption{True dynamics for $\lambda$ above and below $\mu^*$}
    \label{fig:comparaison_figures}
\end{figure}

Then the following figures illustrate the solution of the fixed-point equation and report on the absolute relative difference between the mean performance metrics of the true system and those of the approximations, for both delay and congestion. They also show, for each system, the corresponding means, variances, and busy probabilities.

Here $L=1$, $B=10$, $N=5$ and Monte Carlo average are performed for time between $10^4$ and $10^5$, for the different 
path-loss models $\ell_1, \ell_2$ and $\ell_3$ with $A=1$, $C=3$, $r_0=1$.

\begin{figure}[H]
    \centering
    \includegraphics[width=0.5\textwidth]{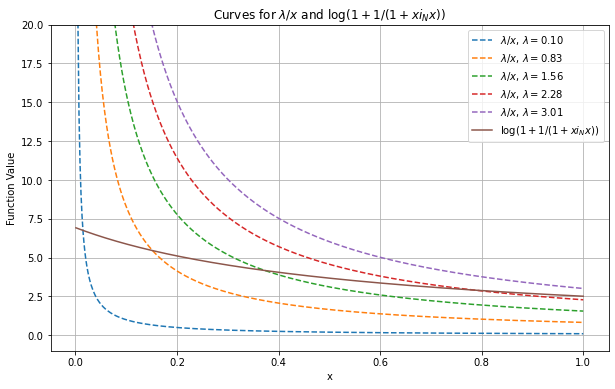}
    \caption{Solution of the fixed point equation (8) for $\ell_1$}
    \label{fig:fixed_point_1}
\end{figure}

\begin{figure}[H]
    \centering
    \includegraphics[width=0.5\textwidth]{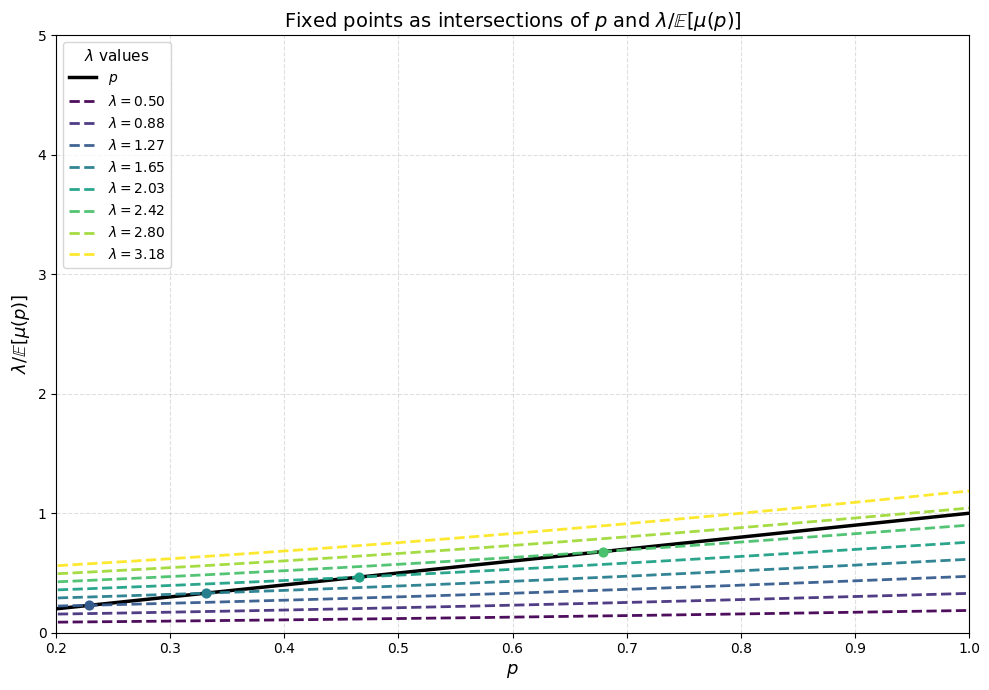}
    \caption{Solution of the fixed point equation (10) for $\ell_1$}
    \label{fig:fixed_point}
\end{figure}

\begin{figure}[H]
    \centering
    \begin{subfigure}[b]{0.32\textwidth}
        \centering
        \includegraphics[width=\textwidth]{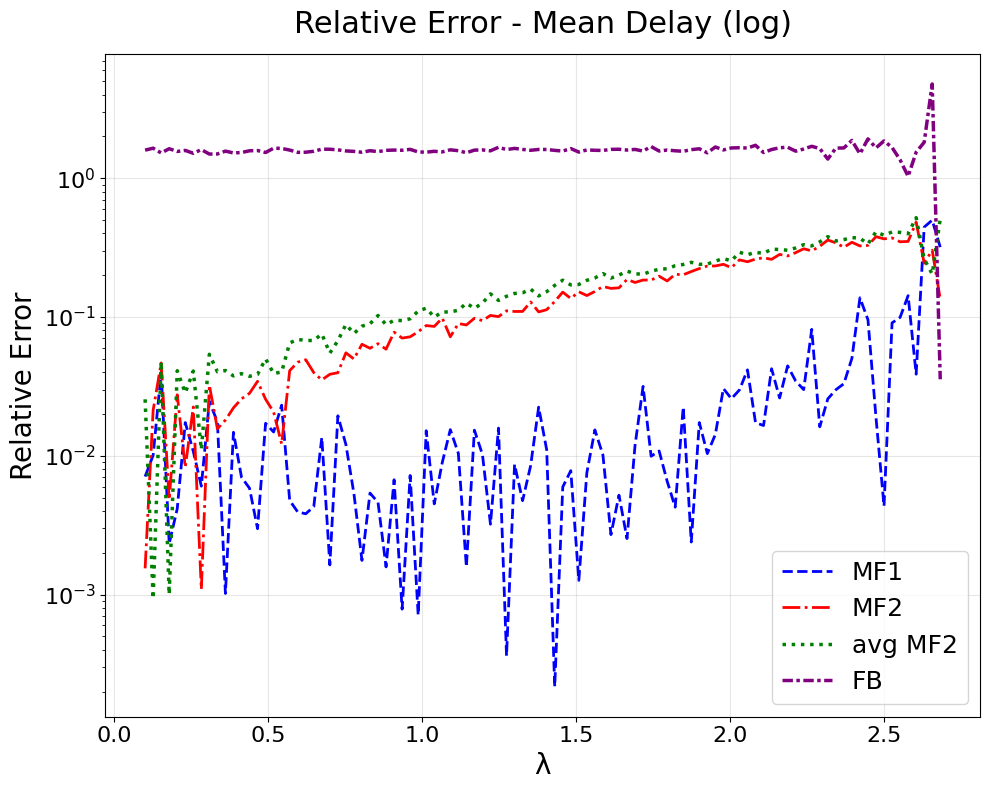}
        \caption{Relative mean difference for the delay-- $\ell_1$}
    \end{subfigure}
    \hfill
    \begin{subfigure}[b]{0.32\textwidth}
        \centering
        \includegraphics[width=\textwidth]{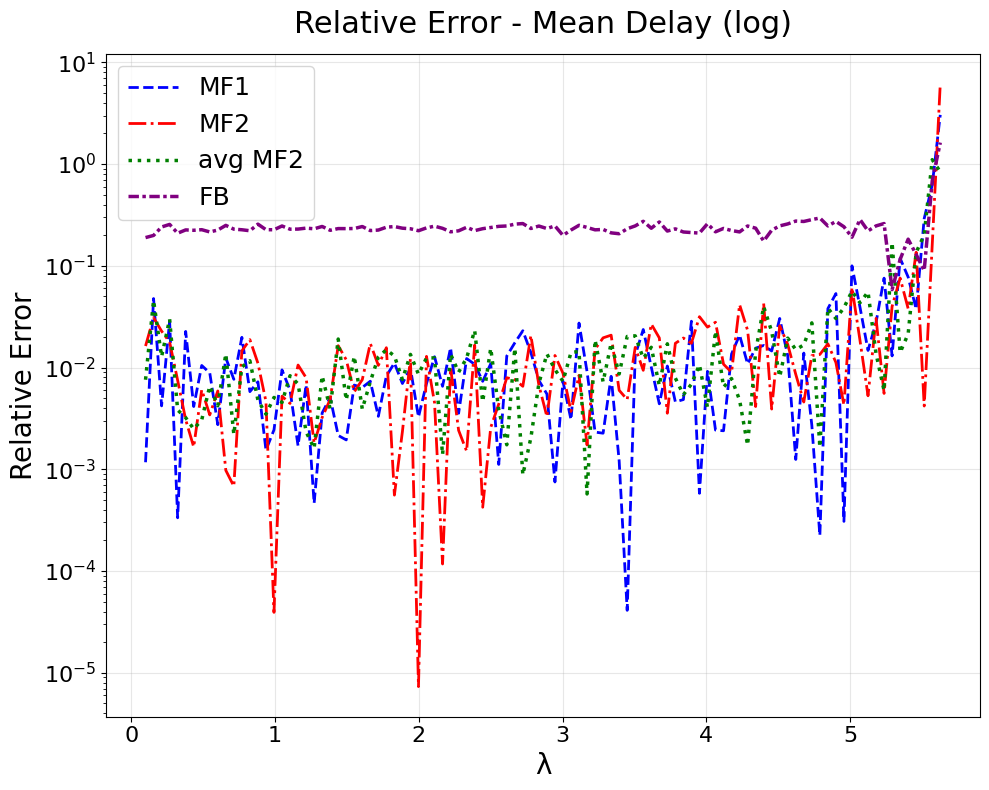}
        \caption{Relative mean difference for the delay -- $\ell_2$}
    \end{subfigure}
    \hfill
    \begin{subfigure}[b]{0.32\textwidth}
        \centering
        \includegraphics[width=\textwidth]{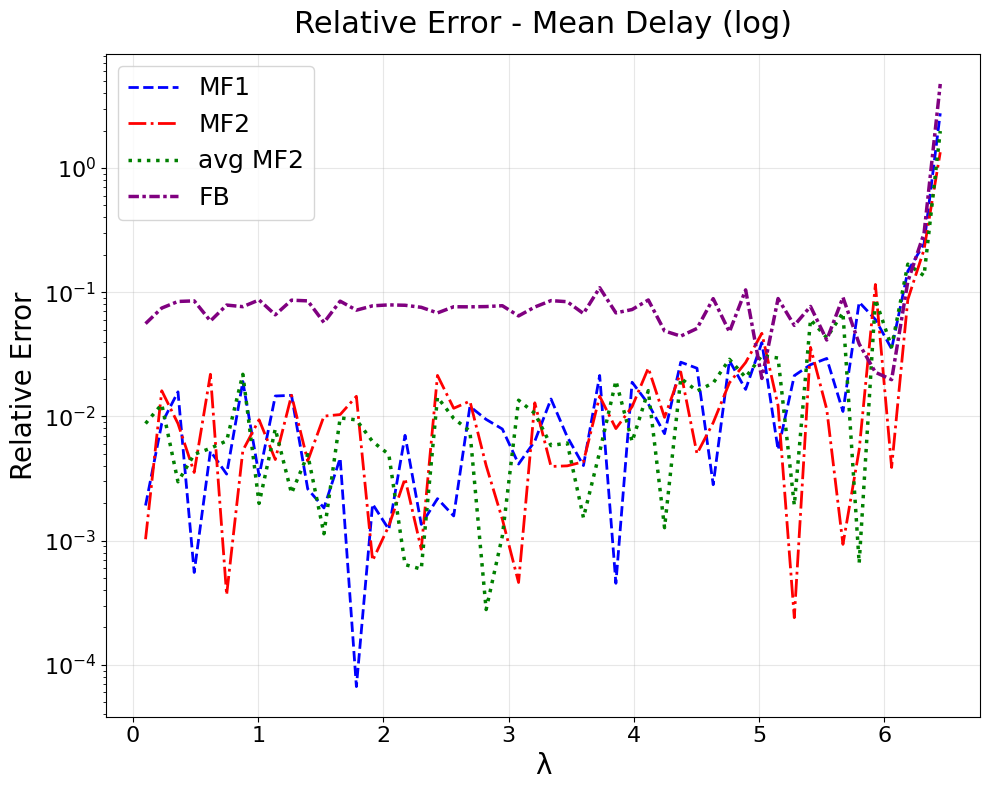}
        \caption{Relative mean difference for the delay -- $\ell_3$}
    \end{subfigure}
    \caption{Relative mean difference: Comparison of the systems,
             for path-loss models $\ell_1$, $\ell_2$, $\ell_3$.}
    \label{fig:abs_mean_distance}
\end{figure}

\begin{figure}[H]
    \centering
    \begin{subfigure}[b]{0.32\textwidth}
        \centering
        \includegraphics[width=\textwidth]{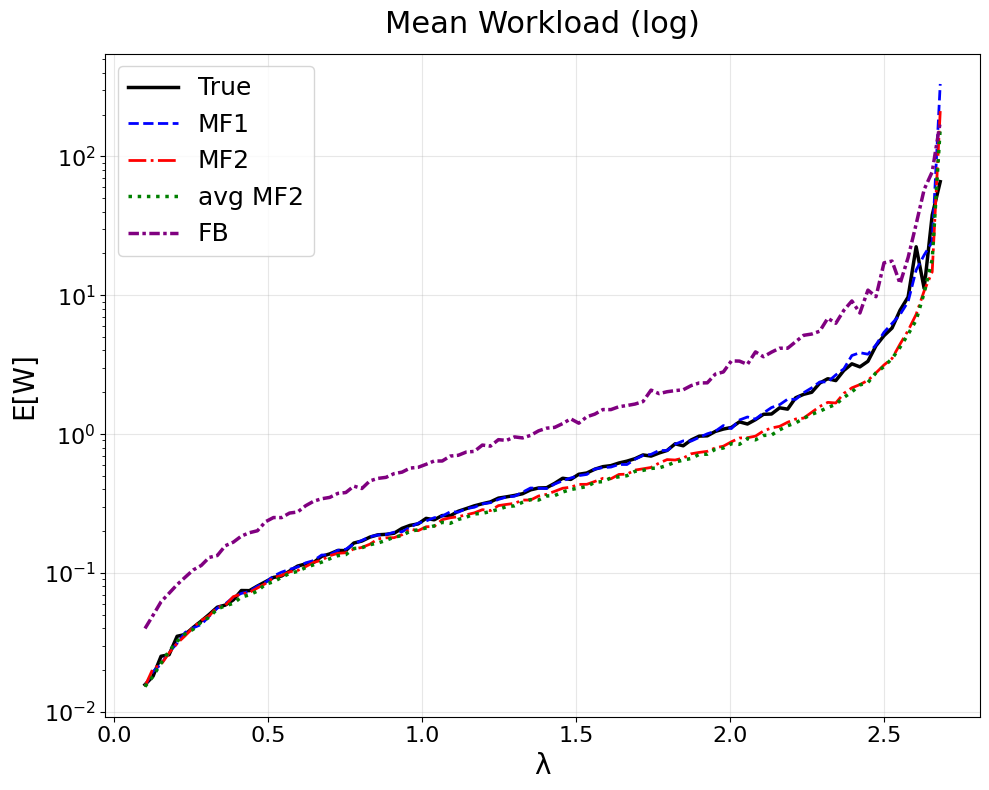}
        \caption{Mean Congestion -- $\ell_1$}
    \end{subfigure}
    \hfill
    \begin{subfigure}[b]{0.32\textwidth}
        \centering
        \includegraphics[width=\textwidth]{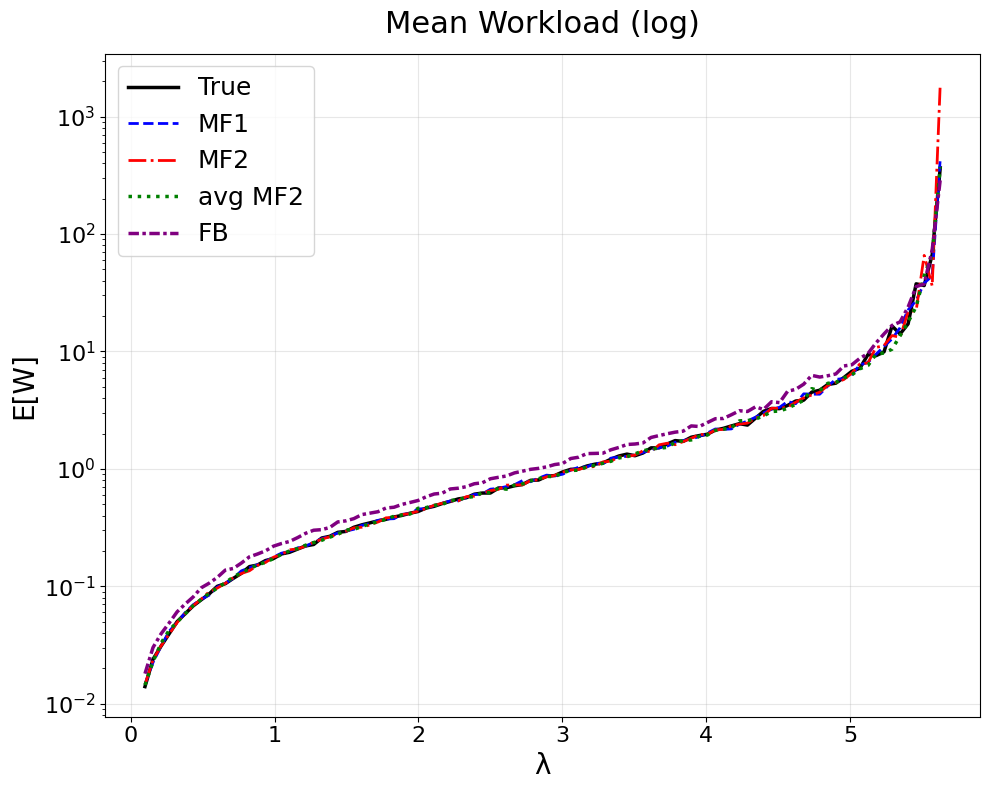}
        \caption{Mean Congestion -- $\ell_2$}
    \end{subfigure}
    \hfill
    \begin{subfigure}[b]{0.32\textwidth}
        \centering
        \includegraphics[width=\textwidth]{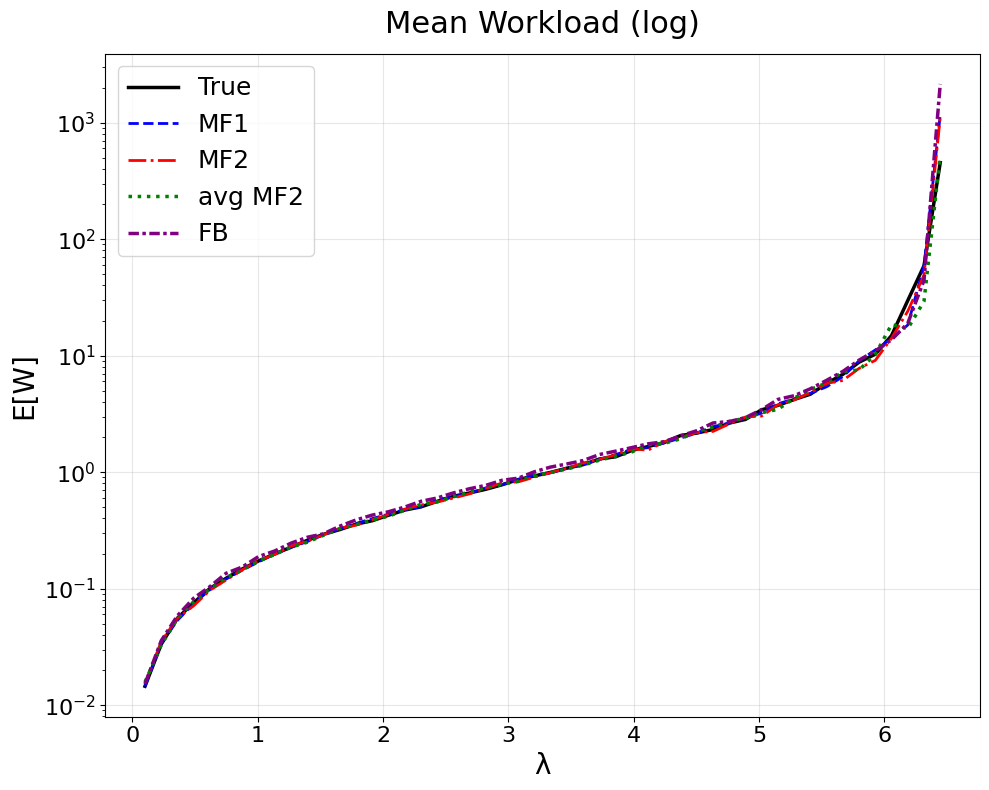}
        \caption{Mean Congestion -- $\ell_3$}
    \end{subfigure}
    \caption{Mean delay: Comparison of the systems,
             for path-loss models $\ell_1$, $\ell_2$, $\ell_3$.}
    \label{fig:mean_delay}
\end{figure}

\begin{figure}[H]
    \centering
    \begin{subfigure}[b]{0.32\textwidth}
        \centering
        \includegraphics[width=\textwidth]{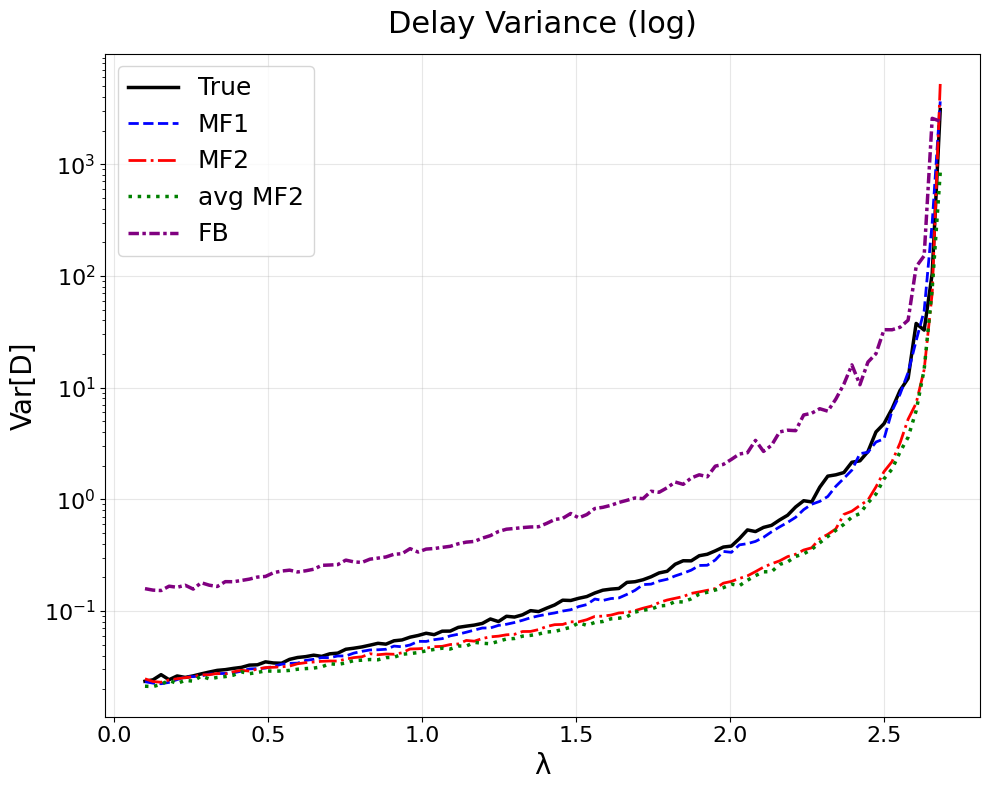}
        \caption{Variance delay -- $\ell_1$}
    \end{subfigure}
    \hfill
    \begin{subfigure}[b]{0.32\textwidth}
        \centering
        \includegraphics[width=\textwidth]{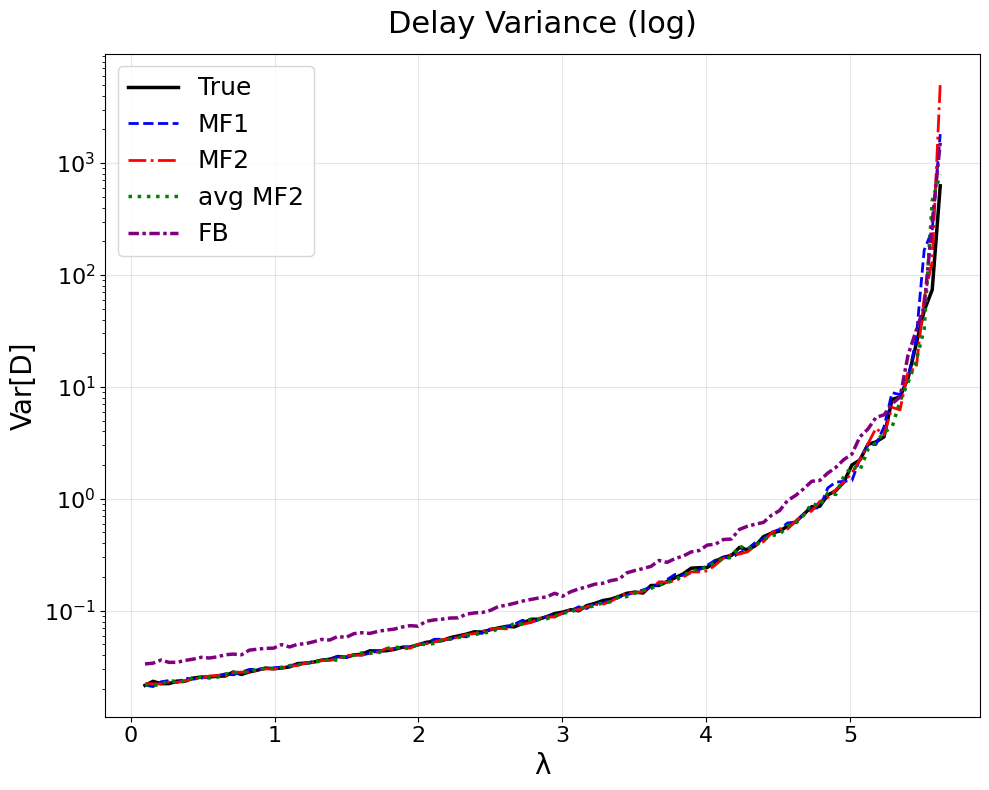}
        \caption{Variance delay -- $\ell_2$}
    \end{subfigure}
    \hfill
    \begin{subfigure}[b]{0.32\textwidth}
        \centering
        \includegraphics[width=\textwidth]{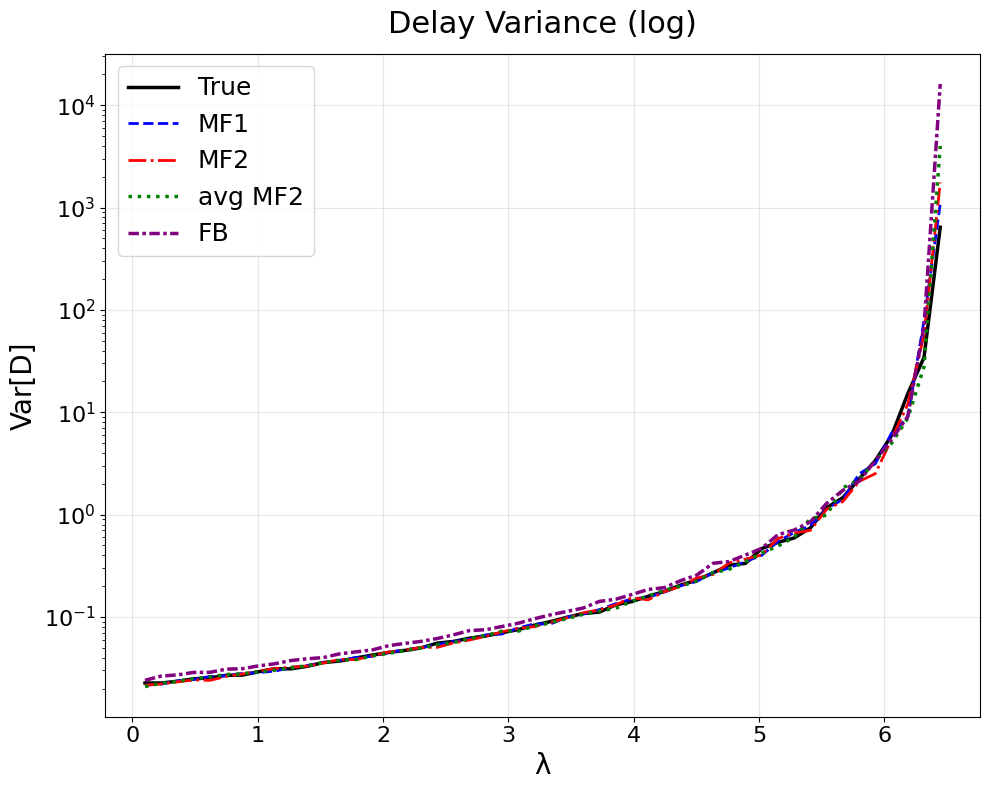}
        \caption{Variance delay -- $\ell_3$}
    \end{subfigure}
    \caption{Variance of the delay: Comparison of the systems,
             for path-loss models $\ell_1$, $\ell_2$, $\ell_3$.}
    \label{fig:var_delay}
\end{figure}

\begin{figure}[H]
    \centering
    \begin{subfigure}[b]{0.32\textwidth}
        \centering
        \includegraphics[width=\textwidth]{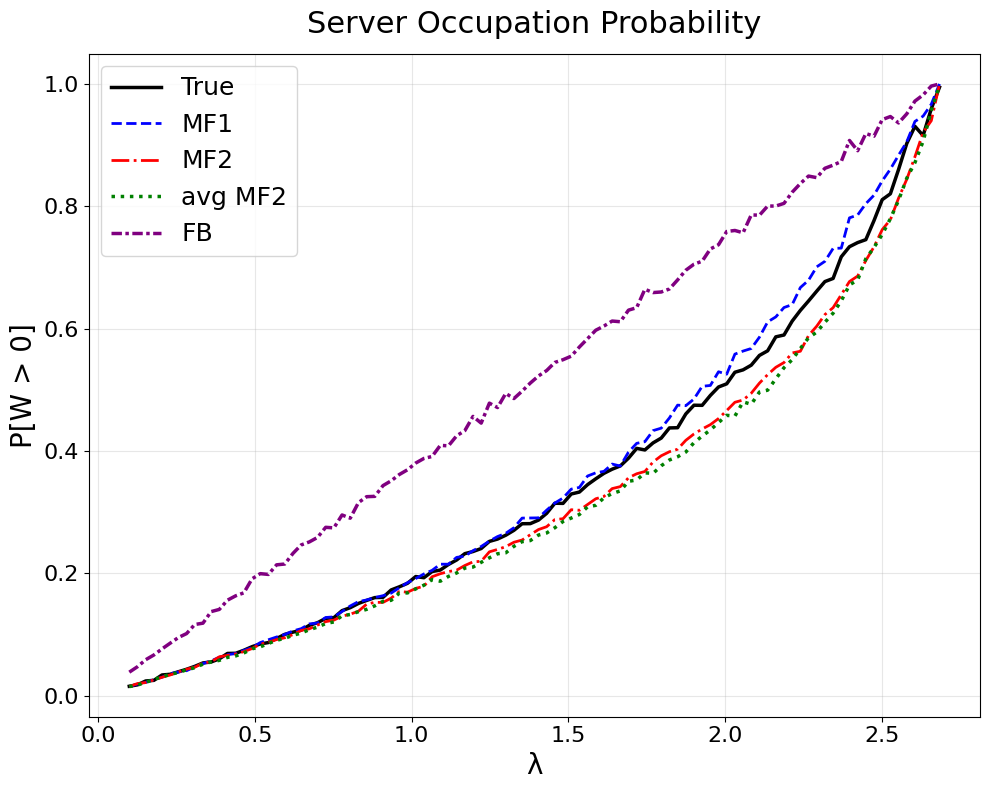}
        \caption{Busy probability -- $\ell_1$}
    \end{subfigure}
    \hfill
    \begin{subfigure}[b]{0.32\textwidth}
        \centering
        \includegraphics[width=\textwidth]{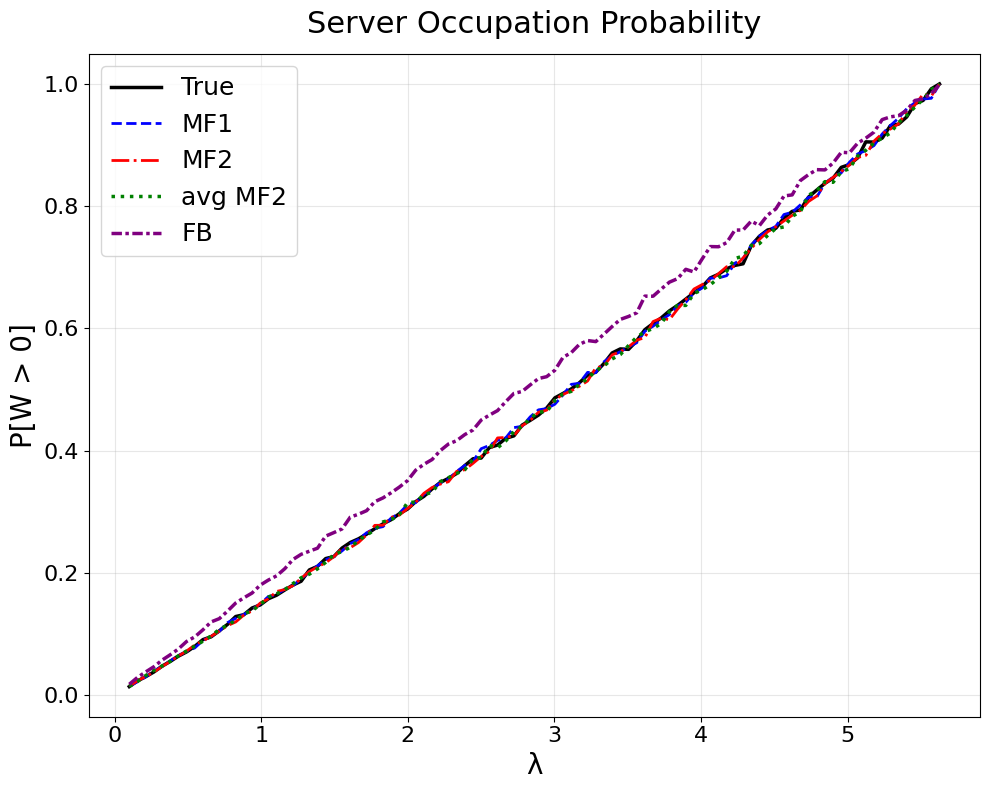}
        \caption{Busy probability -- $\ell_2$}
    \end{subfigure}
    \hfill
    \begin{subfigure}[b]{0.32\textwidth}
        \centering
        \includegraphics[width=\textwidth]{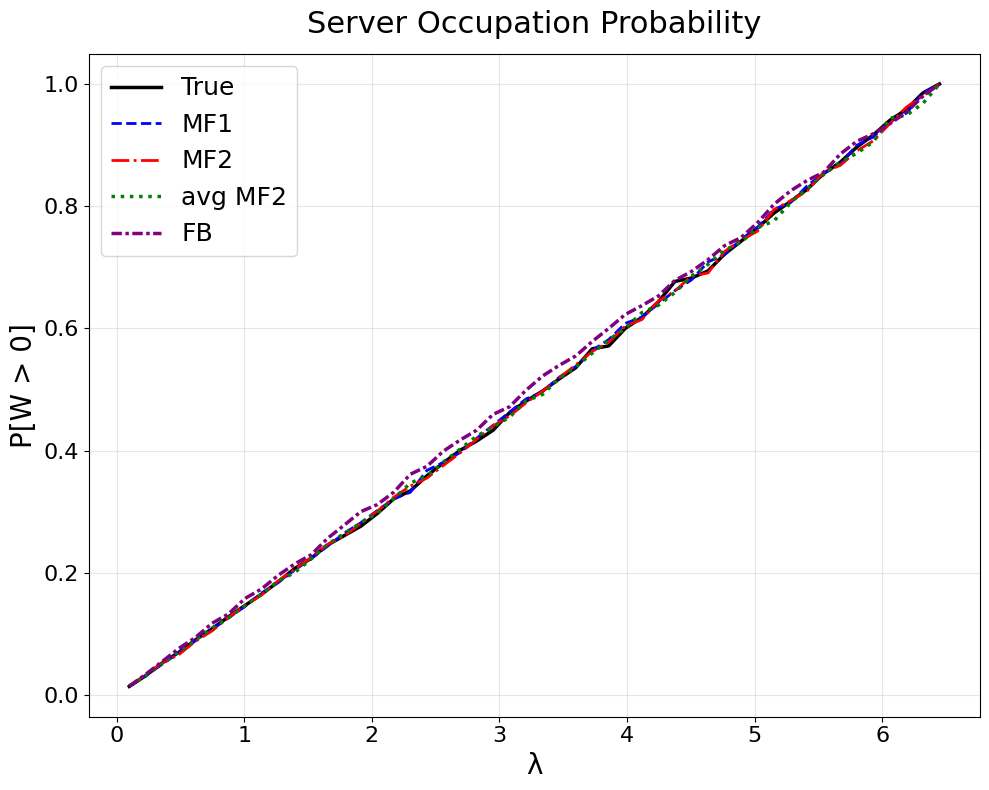}
        \caption{Busy probability-- $\ell_3$}
    \end{subfigure}
    \caption{Busy probability: Comparison of the systems,
             for path-loss models $\ell_1$, $\ell_2$, $\ell_3$.}
    \label{fig:mean_workload}
\end{figure}

The figures are obtained using the formulas of the previous paragraph. For those for which there is no closed form, mainly the true system and the mean-field 2 systems, we used Monte-Carlo average to get the different statistics. 
Figures 3 and 4 provide numerical evidence for the existence and uniqueness of the solutions to the fixed-point equations associated with the two mean-field systems, in the regime $\lambda < \mu^*$. These observations are consistent with the theoretical results. They also support our conjecture that uniqueness holds for equation (11), although we were not able to prove it analytically.

We then compare the different approximations in terms of the mean and variance of both delay and congestion. A clear discrepancy can be observed between the full-buffer system and the original system.
In particular, for the path-loss function $\ell_1$, shown in Figure~6(a), the mean delays differ by approximately half an order of magnitude. Among the proposed approximations, the first mean-field model performs best, especially away from the stability boundary. In particular, its relative error for the mean delay remains around $10^{-2}$ for all values of $\lambda$ sufficiently far from the stability threshold.

The differences between the approximation schemes become less pronounced as the interference level decreases. Indeed, the path-loss function $\ell_1$ induces stronger interference than $\ell_2$, while $\ell_3$ corresponds to a regime where interference is comparatively weak. Consequently, in scenarios where the main quantities of interest are the mean and variance of delay and congestion, and where interference is low, all approximation schemes appear to provide satisfactory estimates.

 For the busy probability, the full-buffer system remains significantly different from the true system, with a discrepancy of roughly a factor two for path-loss functions $\ell_1$. We see on Figure 8.a that the conjecture 3.27 seems to hold:  Mean-field 2 represent a lower bound while Mean-field 1 is an upper bound.

We also observe that the analytical comparisons between the different approximation systems,
established in Propositions~3.23, 3.25, and~3.26, are numerically confirmed in Figures~6 and~8.

\subsection{Long Time behavior}\label{sec:long_time}

Since the process evolves on a countable state space and is irreducible, and
since the existence and uniqueness of an invariant distribution have already
been established, convergence to equilibrium follows from standard Markov chain
theory. The aim of this section is to obtain a quantitative estimate on this
convergence.

\begin{theorem}[Exponential convergence of the true system]
Assume that \(\lambda<\mu^*\). Let \(\pi_\lambda\) denote the unique invariant
distribution of the true system. Then, for every initial state
\(x\in\mathbb N^N\), there exist constants \(C_x<\infty\) and \(\kappa>0\) such
that
\[
\|\delta_x P_t-\pi_\lambda\|_{TV}
\leq C_x e^{-\kappa t},
\qquad t\geq 0.
\]
\end{theorem}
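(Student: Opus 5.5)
We follow the route announced in the introduction: establish a Foster--Lyapunov drift condition of exponential type and then invoke the continuous-time exponential ergodicity criterion of Meyn--Tweedie (Down--Meyn--Tweedie). The chain $Q$ has bounded jump rates, since the total rate out of any state is at most $N(\lambda+\mu^{**})$. We may therefore work with the generator $\mathcal L$ directly on unbounded test functions of exponential growth, with the usual localisation justified by non-explosion.

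\textbf{Choice of Lyapunov function.} The key point is that every service rate is bounded below by the worst case: $\mu_i(q)\ge \mu^*$ for all $q$. We take
\[
V(q)=\prod_{i=1}^N e^{\theta q_i}=e^{\theta\sum_i q_i},
\]
with $\theta>0$ to be chosen. For a state $q$ with busy set $S(q)=\{i:q_i>0\}$,
\[
\mathcal L V(q)=V(q)\Bigl[\lambda N(e^{\theta}-1)-\sum_{i\in S(q)}\mu_i(q)(1-e^{-\theta})\Bigr]
\le V(q)\Bigl[\lambda N(e^{\theta}-1)-|S(q)|\,\mu^*(1-e^{-\theta})\Bigr].
\]
The bracket is negative only when $|S(q)|$ is close to $N$, so a sum over coordinates does not yield a drift outside a finite set. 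This is the main obstacle: $V$ must handle states where a few queues are huge and the others are empty.

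The remedy is to use $V(q)=\sum_i e^{\theta q_i}$ instead. Then
\[
\mathcal L V(q)=\sum_i e^{\theta q_i}\bigl[\lambda(e^\theta-1)-\mu_i(q)(1-e^{-\theta})\mathbf 1_{\{q_i>0\}}\bigr].
\]
For $i$ with $q_i>0$ the term is at most $-c\,e^{\theta q_i}$, where
\[
c=\mu^*(1-e^{-\theta})-\lambda(e^\theta-1)>0 .
\]
Such a $\theta$ exists because at first order in $\theta$ the constant $c$ equals $\theta(\mu^*-\lambda)>0$. For $i$ with $q_i=0$ the term is at most $\lambda(e^\theta-1)$, which is bounded. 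Hence
\[
\mathcal L V(q)\le -c\,V(q)+N\bigl(c+\lambda(e^\theta-1)\bigr)=: -cV(q)+b,
\]
and in particular $\mathcal LV\le -\tfrac c2 V+b\,\mathbf 1_{C}$ with $C=\{V\le 2b/c\}$, which is a finite set.

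\textbf{Conclusion.} Finite sets are petite for an irreducible continuous-time chain on a countable space, and any skeleton chain is aperiodic. Irreducibility comes from arrivals at rate $\lambda$ and departures at rate $\ge\mu^*>0$ from every nonempty queue. Theorem 6.1 of Meyn--Tweedie (continuous-time $V$-uniform ergodicity) then gives
\[
\|\delta_xP_t-\pi_\lambda\|_{V}\le R\,V(x)\rho^t ,
\]
and since $\|\cdot\|_{TV}\le\|\cdot\|_V$ because $V\ge1$, we obtain the statement with $C_x=R\,V(x)$ and $\kappa=-\log\rho$. As a by-product, $\pi_\lambda$ has exponential moments. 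The step needing most care is the rigorous application of $\mathcal L$ to the unbounded $V$. We would handle it via Dynkin's formula with stopping times $\tau_n=\inf\{t:\sum_i q_i(t)\ge n\}$, using the SDE representation \eqref{eq:sde} and the bounded jump rates, which give $\mathbb E V(Q(t))<\infty$.
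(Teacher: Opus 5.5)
Your proposal is correct and is essentially the paper's proof. The paper uses the same Lyapunov function $V(x)=\sum_i r^{x_i}$ (your $r=e^{\theta}$, with your $c$ equal to the paper's $-a(r)$) and gets the same drift bound $\mathcal L V\le -cV+N\bigl(c+\lambda(r-1)\bigr)$ from $\mu_i\ge\mu^*$, which it phrases as $\mathcal L V\le\mathcal L^{FB}V$; it then concludes with the Meyn--Tweedie exponential ergodicity theorem, using that finite sets are petite and taking $C_x$ proportional to $V(x)$, and your localisation argument plays the role of the paper's remark on the extended generator.
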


\subsection{Random positions}\label{sec:rd_pos}

We were dealing with non random position. Now let's take this into account and try to get estimate on principal parameter thanks to the Replica Mean-field machinery. Since we are in the equi-distant setting the randomness will be on the spacing of the queues. We will take it uniformly distributed on $[a,b]$ with $b>a>0$ and use the power law path loss function $l(x)=x^{-\alpha}$. 
Take: 
\begin{itemize}
    \item $\psi=\sum_{k=1}^N\delta_{(R\cos(\frac{2\pi k}{N}),R\sin(\frac{2\pi k}{N}))}.$
    \item $R=\frac{N}{2\pi}S$, with $S \sim $ Unif $(a,b)$.
    \item $J:=\sum\limits_{j=1}^{N-1}j^{-\alpha}$
\end{itemize}
This implies a common distance $S$ between the points. 
First let's compute the average worst speed $\mathbb{E}[\mu^*]$.
\begin{align}
    \mathbb{E}[\mu^*]&=\mathbb{E}[B\log_2(1+\frac{1}{1+JS^{-\alpha}})] \\
    &=\frac{B}{\ln(2)(b-a)}\int_{a}^b\log(1+\frac{1}{1+Js^{-\alpha}})\rm{d}s \nonumber\\
    &=-\frac{B}{\ln(2)\alpha(b-a)}J^{1/\alpha}\int_{\frac{J}{a^\alpha}}^{\frac{J}{b^\alpha}}\frac{1}{u^{\frac{\alpha+1}{\alpha}}}\log(1+\frac{1}{1+u})\rm{d}u \nonumber\\
    &=-\frac{B}{\ln(2)\alpha(b-a)}J^{1/\alpha}\Bigg[[\frac{-\alpha}{u^{1/\alpha}}\log(1+\frac{1}{1+u})]_\frac{J}{a^\alpha}^\frac{J}{b^\alpha} \notag \\
    &-\int_{\frac{J}{a^\alpha}}^{\frac{J}{b^\alpha}}\frac{\alpha}{u^{1/\alpha}}\frac{1}{1+u}\frac{1}{2+u}\rm{d}u\Bigg].
\end{align}
Depending on the choice of $\alpha$, one can continue the computation thanks to a partial fraction decomposition and change of variable and have final expression in terms of the logarithm and arctan functions. 

Now we want $\mathbb{E}[q]$ using the Mean-field approximation: $\mathbb{E}[q] \approx \mathbb{E}^{MF1}[q]$
and: 
\begin{align}
&\mathbb{E}^{\mathrm{MF1}}[q]
=
\mathbb{E}^{\mathrm{MF1}}
\!\left[
\mathbb{E}^{\mathrm{MF1}}[q\mid S]
\right]
\notag \\
&\mathbb{E}^{\mathrm{MF1}}
\!\left[
\mathbb{E}^{\mathrm{MF1}}[q\mid S]
\right]
=
\mathbb{E}^{\mathrm{MF1}}
\!\left[
\frac{x^{\star}(S)}{1 - x^{\star}(S)}
\right].
\end{align}
Unfortunately $x^*$ has no closed form but we can approximate it when the number of users is sufficiently large. Note that, in this regime, the zone of stability shrinks.

\begin{proposition}[Random Spacing Formulas]
     $\forall \epsilon >0, \exists N$, such that  for all $ 0<\lambda<B\log_2(1+\frac{1}{1+\frac{J}{a^\alpha}})$, $$\mathbb{E}^{MF1}[q]=\frac{\lambda}{(b-a)\alpha}J^{1/\alpha}\int_{\frac{J}{b^\alpha}}^{\frac{J}{a^\alpha}}\frac{1}{B/\ln(2)-\lambda u}\frac{1}{u^{1+1/\alpha}}\rm{d}u+ \epsilon .$$
\end{proposition}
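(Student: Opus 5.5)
The plan is to work conditionally on the spacing $S$, use the closed form of the MF1 stationary law, linearise the fixed-point equation (8) in the regime where the total interference $\xi=JS^{-\alpha}$ is large, and then integrate over $S\sim\mathrm{Unif}(a,b)$ with the change of variables $u=JS^{-\alpha}$ already used for $\mathbb E[\mu^*]$.

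\emph{Step 1 (conditioning).} Conditionally on $S=s$, the system is the symmetric one with $\xi_N=Js^{-\alpha}$. The condition $\lambda<B\log_2(1+\frac{1}{1+J/a^\alpha})$ gives $\lambda<\mu^*(s)$ for every $s\in[a,b]$, since $\mu^*$ is increasing in $s$. Hence the lemma on (8) provides a unique root $x^\star(s)\in(0,1)$. By the theorem on the stationary distribution of MF1, $\mathbb E^{\mathrm{MF1}}[q\mid S=s]=x^\star(s)/(1-x^\star(s))$.

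\emph{Step 2 (linearised fixed point).} Write $y=1/(1+x\xi)$, so that $B\log_2(1+y)=\frac{B}{\ln 2}\bigl(y+O(y^2)\bigr)$. Dropping the quadratic term, (8) becomes $\lambda(1+x\xi)=\frac{B}{\ln 2}x$. Its solution is
\[
\tilde x(s)=\frac{\lambda}{B/\ln 2-\lambda u},\qquad u=Js^{-\alpha}.
\]
This is well defined and positive because $B\log_2(1+\frac{1}{1+u})\le\frac{B}{\ln2}\frac{1}{1+u}$, so the standing condition on $\lambda$ forces $\lambda u<B/\ln 2$ for all $u\le J/a^\alpha$.

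To compare $x^\star$ with $\tilde x$, I would use monotonicity. The map $x\mapsto \lambda/x-B\log_2(1+\frac{1}{1+x\xi})$ is strictly monotone near the root. The perturbation between the exact and the linearised right-hand sides is $O\bigl((1+x\xi)^{-2}\bigr)$. A first-order (implicit-function or mean-value) estimate should then give
\[
|x^\star(s)-\tilde x(s)|\le \frac{C}{1+x^\star(s)\,\xi}.
\]
The key point is that $x^\star\xi$ stays large uniformly in $s\in[a,b]$ as $N$ grows, since $x^\star\xi\ge \lambda\xi/\mu^{**}$ and $\xi\ge J/b^\alpha$. Here I take $J=J_N$ to grow with $N$, which is the regime where the paper says the stability zone shrinks.

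\emph{Step 3 (the Geometric mean versus $x$).} The target formula is $\mathbb E[\tilde x(S)]$ rather than $\mathbb E[\tilde x/(1-\tilde x)]$. I therefore also need $x^\star/(1-x^\star)-x^\star=(x^\star)^2/(1-x^\star)$ to be small. This is the main obstacle: it holds only if $x^\star$ itself is small, i.e.\ only if $\lambda\xi$ stays bounded away from $B/\ln 2$.

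My first attempt would be to exploit the shrinking stability region. The condition on $\lambda$ forces $\lambda\le C/J$, hence $\tilde x\le \lambda/(B/\ln 2-\lambda J/a^\alpha)$. I would then examine whether this is uniformly small, possibly by restricting to $\lambda$ in a compact subset of the stated range or by letting $N$ depend on $\lambda$. If this fails near the boundary $\lambda\approx\mu^*(a)$, the honest fallback is to state the result with $\mathbb E[x^\star/(1-x^\star)]$ replaced by its first-order term.

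\emph{Step 4 (integration).} The bounds of Steps 2--3 are uniform in $s\in[a,b]$, so $\bigl|\mathbb E^{\mathrm{MF1}}[q]-\mathbb E[\tilde x(S)]\bigr|\le\epsilon$ for $N$ large. Finally, compute
\[
\mathbb E[\tilde x(S)]=\frac1{b-a}\int_a^b \frac{\lambda}{B/\ln 2-\lambda Js^{-\alpha}}\,\mathrm ds .
\]
Substituting $s=(J/u)^{1/\alpha}$, so that $\mathrm ds=-\frac1\alpha J^{1/\alpha}u^{-1-1/\alpha}\,\mathrm du$, and flipping the limits to run from $J/b^\alpha$ to $J/a^\alpha$ yields exactly the displayed integral.
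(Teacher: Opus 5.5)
Your route is the same as the paper's. Lemma 4.8 linearises (8) by a first-order Taylor expansion, Lemma 4.9 applies $x/(1-x)$, and the proposition follows from the transfer theorem and $u=Js^{-\alpha}$. Your Step 4 computation is correct. The gap is in Steps 2 and 3, and it cannot be closed.

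\textbf{Step 2.} The claim that $x^\star\xi$ becomes large is false. The admissible range gives
$\lambda\xi_s\le\lambda Ja^{-\alpha}<\frac{B}{\ln 2}\frac{Ja^{-\alpha}}{1+Ja^{-\alpha}}<\frac{B}{\ln 2}$.
So your lower bound $\lambda\xi/\mu^{**}$ stays below $1/\ln 2$; this is just your own remark $\lambda\le C/J$ from Step 3. More precisely, set $w=x^\star\xi$. Then (8) reads $w\,g(w)=\lambda\xi$, so $w$ depends only on $\lambda\xi\in(0,B/\ln 2)$, does not grow with $N$, and tends to $0$ as $\lambda\xi\to0$. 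Hence $y=1/(1+w)$ is not small, and the linearisation error is of the same order as $x^\star$ itself:
\begin{itemize}
\item as $\lambda\xi\to0$, $x^\star\approx\lambda/B$ while $\tilde x\approx\lambda\ln 2/B$;
\item as $w\to\infty$, $\tilde x/x^\star\to 2/3$.
\end{itemize}
So your bound $C/(1+x^\star\xi)$ is $O(1)$, and $\tilde x$ is never a relative approximation of $x^\star$. Note also that $J=\sum_{j<N}j^{-\alpha}$ grows only when $\alpha\le1$. For $\alpha>1$ it converges to $\zeta(\alpha)$, so taking $N$ large buys nothing.

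\textbf{Step 3.} You were right to flag this step, but it fails because the statement is false uniformly in $\lambda$. Fix $N$ and let $\lambda\uparrow\mu^*(a)$.
\begin{itemize}
\item Write $x^\star(s)=w^\star(\lambda\xi_s)/\xi_s$, where $w^\star$ inverts the increasing map $H(w)=w\,g(w)$. Then $\frac{d}{ds}x^\star(s)=\frac{\xi_s'}{\xi_s^2}\bigl(H(w)/H'(w)-w\bigr)<0$, because $wH'(w)=H(w)+w^2g'(w)<H(w)$.
\item At $\lambda=\mu^*(a)$ one has $x^\star(a)=1$, so $1-x^\star(s)\asymp s-a$ near $a$. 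By monotone convergence, $\mathbb E^{\mathrm{MF1}}[q]=\mathbb E[x^\star(S)/(1-x^\star(S))]\to\infty$.
\item The right-hand side equals $\mathbb E[\tilde x(S)]$ and stays bounded, since $\lambda u\le\mu^*(a)Ja^{-\alpha}=H(Ja^{-\alpha})<B/\ln 2$ on the range of integration.
\end{itemize}
So no $N$ works for all admissible $\lambda$.

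The paper's two-line proof has the same defect. Its Lemma 4.9 (written with $N(N-1)/2$ in place of $J$) actually produces $\lambda/(B/\ln 2-\lambda(1+\xi))=\tilde x/(1-\tilde x)$, not the $\tilde x$ integrated in the proposition; this is exactly the mismatch you spotted.

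Only a trivial version survives. Suppose $\alpha\le1$ and $\lambda Ja^{-\alpha}\ln 2/B\le1-\eta$ for a fixed $\eta>0$. Let $w^\star_\eta$ be the root of $H(w)=(1-\eta)B/\ln 2$. Then $x^\star(s)\le w^\star_\eta b^\alpha/J$ and $\tilde x(s)\le a^\alpha/(\eta J)$. Both sides are therefore $O(1/J)$, each is separately below $\epsilon/2$ for large $N$, and the linearisation plays no role.
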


We can get closed form expressions for certain values of $\alpha$.
With Little's law, we get for free the expected delay of typical job: $\mathbb{E}^{MF1,0}[D]=\frac{1}{\lambda}\mathbb{E}^{MF}[q] $.


\section{Proofs}\label{sec:proof}

In this section we dive into the details of the proofs.  To ease the notations we sometimes replace $B\log_2(1+\frac{1}{1+x})$ by $g(x)$. 

\subsection{Stability Region}
First we can bound the stability region using $\mu^*$ and $\mu^{**}$ defined in \Cref{sec:descr}, 
\begin{lemma}
    The stability region of Q, i.e the range of $\lambda$ for which the chain Q is positive recurrent, is at least $[0,\mu^*)$ and at most $[0,\mu^{**})$.
\end{lemma}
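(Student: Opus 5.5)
Both inclusions are proved by monotone coupling in the SDE representation \eqref{eq:sde}: the true system is driven by the same Poisson random measures $A_i$ and $D_i$ as two benchmark systems. The first benchmark is the full-buffer system, with every rate frozen at $\mu^*$. The second is the interference-free system, with every rate frozen at $\mu^{**}=B$ (with $L=1$). Since $g(x)=B\log_2(1+\frac{1}{1+x})$ is nonincreasing, we have $\mu^*\le \mu_i(q)\le \mu^{**}$ for every state $q$. Each benchmark is a collection of $N$ independent $M/M/1$ queues, whose stability regions are $\lambda<\mu^*$ and $\lambda<\mu^{**}$ respectively.

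\textbf{Lower bound: $[0,\mu^*)\subset$ stability region.} Let $\bar q_i$ solve the same equation with the indicator $\mathbf 1_{\{u\le \mu^*\}}$ and the same initial condition. I claim that $\widetilde q_i(t)\le \bar q_i(t)$ for all $i$ and $t$, almost surely. I would prove it by induction over the (a.s. locally finite, after restricting $u\le \mu^{**}$) jump times of the driving measures. Arrivals are common to both systems. At an atom $(u,z)$ of $D_i$ with $u\le \mu^*$, both queues decrease if they are positive. If the two are equal, then both are positive or both are zero, so the order is kept. If $\widetilde q_i<\bar q_i$, the gap shrinks by at most one, so the order is still kept. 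At an atom with $\mu^*<u\le\mu_i$, only $\widetilde q_i$ may decrease, which helps. This is the usual single-queue monotonicity argument, and it needs no monotonicity in the other coordinates because $\mu^*$ is a uniform lower bound on $\mu_i$.

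For $\lambda<\mu^*$, each $\bar q_i$ is a positive recurrent $M/M/1$ queue, so the product chain $\bar Q$ is positive recurrent. Its return time to $0$ has finite mean, since the product of independent positive recurrent irreducible aperiodic chains is positive recurrent. Domination gives $\{\bar Q(t)=0\}\subset\{\widetilde Q(t)=0\}$. Hence the hitting time of $0$ for $Q$ is bounded by that of $\bar Q$, and so has finite mean from any starting point. Irreducibility of $Q$ then yields positive recurrence. Equivalently, I could use the Foster criterion with the Lyapunov function $\sum_i q_i$, whose drift outside a finite set is at most $\sum_i(\lambda-\mu^*)<0$ whenever all queues are positive. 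The coupling route is cleaner, though, because of the boundary states.

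\textbf{Upper bound: stability region $\subset[0,\mu^{**})$.} The same coupling in the reverse direction gives $\widetilde q_i\ge \hat q_i$, where $\hat q_i$ uses the rate $\mu^{**}$. If $\lambda\ge\mu^{**}$, each $\hat q_i$ is a null recurrent or transient $M/M/1$ queue, so $\mathbb P(\hat q_1(t)=0)\to 0$. It follows that $\mathbb P(Q(t)=0)\le \mathbb P(\hat q_1(t)=0)\to0$. But if $Q$ were positive recurrent and irreducible, $\mathbb P(Q(t)=0)$ would converge to $\pi(0)>0$ (continuous-time chains do not suffer from periodicity). This contradiction shows that $Q$ is not positive recurrent for $\lambda\ge\mu^{**}$.

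The main obstacle is making the pathwise comparison rigorous: one must check that jumps happen one at a time almost surely and that the induction over jump times is well founded. Both follow from restricting $D_i$ to $u\le\mu^{**}$, which gives finitely many relevant atoms on compact time intervals, and from non-explosion.
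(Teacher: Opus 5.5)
Your proof is correct and is essentially the paper's argument. The paper simply invokes the departure-rate monotonicity of Lemma A.3 with constant rates $\mu^*$ and $\mu^{**}$; your explicit coupling through the common Poisson measures, the hitting-time bound for $\lambda<\mu^*$, and the limit $\mathbb P(Q(t)=0)\to 0$ for $\lambda\ge\mu^{**}$ fill in steps the paper leaves implicit.

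The only thing to drop is your aside that the linear function $\sum_i q_i$ would work ``equivalently''. At the states $(n,0,\dots,0)$ its drift is $N\lambda-\mu^{**}$ for every $n$, which is positive whenever $\mu^{**}/N<\lambda<\mu^*$ (e.g.\ $2<\lambda<2.51$ for the paper's numerical parameters $N=5$, $B=10$). Foster's criterion therefore needs something like the geometric function $\sum_i r^{q_i}$ of Lemma 4.6 instead.
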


\begin{proof}
    Use the monotonicity property of the system proved in Lemma A.3 for the departure rate being constant equal to $\mu^*$ and $\mu^{**}$
\end{proof}

We are now able to prove the exact stability region.
\begin{proof}[Proof of Theorem 3.1]
The case $\lambda<\mu^*$ follows from Lemma~4.1. We now consider the case
$\lambda>\mu^*$.

Let
\[
    \partial := \{q\in\mathbb N^N:\exists i,\ q_i=0\},
    \qquad
    \tau_\partial := \inf\{t\geq 0: Q(t)\in\partial\}.
\]
We shall prove that there exists $n\geq 1$ such that
\[
    \mathbb P_{(n,\dots,n)}(\tau_\partial=\infty)>0.
\]
This is incompatible with recurrence. Indeed, for an irreducible Markov
process on a countable state space, recurrence implies that every state is
hit almost surely from every other state. In particular, starting from
$(n,\dots,n)$, the process would hit a state in $\partial$ almost surely.

We construct on the same probability space as $Q$ a process
$X=(X_1,\dots,X_N)$, where the coordinates are independent $M/M/1$ queues
with arrival rate $\lambda$ and service rate $\mu^*$. Namely, for
$i=1,\dots,N$,
\[
    X_i(t)
    =
    X_i(0)+A_i(t)
    -
    \int_0^t\int_0^\infty
    \mathbf 1_{\{X_i(s-)>0\}}
    \mathbf 1_{\{u\leq \mu^*\}}
    D_i(ds,du),
\]
with
\[
    X(0)=Q(0)=(n,\dots,n).
\]
The coordinates of $X$ are independent since they are driven by independent
arrival processes and independent potential service processes.

On the time interval $[0,\tau_\partial)$, all coordinates of $Q$ are strictly
positive. Hence, for every $i$ and every $s<\tau_\partial$,
\[
    \mu_i(Q(s-))=\mu^*.
\]
Therefore, by pathwise uniqueness for the jump equation driven by the same
Poisson processes,
\[
    Q(t)=X(t),\qquad 0\leq t<\tau_\partial.
\]
Consequently,
\[
    \{\tau_\partial=\infty\}
    =
    \bigcap_{i=1}^N \{\tau_0^{(i)}=\infty\},
\]
where
\[
    \tau_0^{(i)}:=\inf\{t\geq 0:X_i(t)=0\}.
\]
Thus, using the independence of the coordinates of $X$,
\[
    \mathbb P_{(n,\dots,n)}(\tau_\partial=\infty)
    =
    \prod_{i=1}^N
    \mathbb P_n(\tau_0^{(i)}=\infty).
\]
Each $X_i$ is an $M/M/1$ queue with arrival rate $\lambda$ and service rate
$\mu^*$. Since $\lambda>\mu^*$, the associated birth--death chain has positive
drift and
\[
    \mathbb P_n(\tau_0^{(i)}=\infty)
    =
    1-\left(\frac{\mu^*}{\lambda}\right)^n
    >0.
\]
Therefore,
\[
    \mathbb P_{(n,\dots,n)}(\tau_\partial=\infty)
    =
    \left[
    1-\left(\frac{\mu^*}{\lambda}\right)^n
    \right]^N
    >0.
\]
Hence $Q$ cannot be recurrent. Since the process is irreducible on a
countable state space, it follows from the standard classification that $Q$
is transient.
\end{proof}

We now prove that the system is associated. 

\begin{proof}[Proof of Proposition 3.7]

    Define $Q^M$ the truncate of $Q$, i.e. $Q^M(t)=(q_1(t)\wedge M,...,q_N(t)\wedge M)$. Then $Q^M$ is a Markov chained valued in a finite space. Hence we can apply the theorem of \cite{10.1214/aop/1176995804}, which says that if a Markov chain Q on finite state space with partial order is monotone, and only jumps between comparable states in terms of the ordering then if the initial law is associated then Q is associated. 

    Take $ t \in \mathbb{R}^+$, $M \in \mathbb{N}$, $x \in [0;M]^N$
    $Q^M$ inherit of the monotonicity of $Q$ showed in Lemma A.3. It jumps from comparable states and $\delta_x$ is clearly associated. Hence by \cite{10.1214/aop/1176995804} $Q^M$ is associated. 
    Now since there is no explosion in finite time, at fixed $t$, I can coupled the truncated $Q^M$ and $Q$ such that $Q^M(t) \xrightarrow{a.s}Q(t)$ in a monotone way. Hence using monotone convergence, since $f$ and $g$ are non decreasing,  on $\mathbb{E}[f(Q^M(t))g(Q^M(t))]\geq\mathbb{E}[f(Q^M(t))]\mathbb{E}[g(Q^M(t))]$ when $M$ goes to $\infty$ gives the result. 
\end{proof}

\subsection{Stationary Distribution}

 We now establish the well-posedness of (7) and the propagation of chaos of (6) toward it. We base our analysis on \cite{chevallier_champ_moyen} and \cite{fournier2015toymodelinteractingneurons}. 

\begin{lemma}
   Let \(T>0\) and \(m\in C([0,T],[0,1])\). Define
\[
\mu^m(t)
=
g\left(m(t)\xi_N\right),
\qquad t\in[0,T].
\]
Then the equation
\begin{align}\label{eq:qm}
q^m(t)
&=
q^m(0)
+
A[0,t]
-
\int_0^t\int_{\mathbb R_+}
\mathbf 1_{\{q^m(z^-)>0\}}
\mathbf 1_{\{u\leq \mu^m(z^-)\}}
\,D(\mathrm du,\mathrm dz)
\end{align}
admits a unique strong solution.
\end{lemma}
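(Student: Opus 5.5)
The plan is to treat the equation for $q^m$ as a jump equation driven by a finite-rate Poisson structure on compact time intervals, and to build the solution recursively from the atoms of the driving measures. The environment $m$ is deterministic and continuous, so the rate $\mu^m(t)=g(m(t)\xi_N)$ is a deterministic function with values in $[\mu^*,\mu^{**}]\subset[0,B]$, since $g$ is continuous and non-increasing on $[0,\xi_N]$. Consequently only the atoms of $D$ in the strip $[0,T]\times[0,B]$ can contribute to the integral. This strip has finite intensity measure $BT$, so $D$ restricted to it has almost surely finitely many atoms. The same holds for $A$ on $[0,T]$, with mean number of atoms $\lambda T$.

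\textbf{Existence.} I would order all the atoms of $A$ on $[0,T]$ and of $D$ on $[0,T]\times[0,B]$ by their time coordinates, $0<\tau_1<\tau_2<\dots<\tau_M\le T$. Almost surely the times are distinct and $M<\infty$. I then define $q^m$ piecewise constant: it equals $q^m(0)$ on $[0,\tau_1)$, and at each $\tau_k$ it updates as follows.
\begin{itemize}
\item If $\tau_k$ is an arrival time, add $1$.
\item If $\tau_k$ comes from an atom $(u_k,\tau_k)$ of $D$, subtract $1$ if and only if $q^m(\tau_k^-)>0$ and $u_k\le \mu^m(\tau_k)$.
\end{itemize}
Here $\mu^m(\tau_k^-)=\mu^m(\tau_k)$ because $m$ is continuous. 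Each $\tau_k$ is a stopping time for the filtration generated by $A$, $D$ and $q^m(0)$, and each update uses only information up to $\tau_k^-$. The resulting process is therefore càdlàg and adapted, takes values in $\mathbb N$, and satisfies the equation by construction. This is the strong-solution property. Letting $T$ range over $\mathbb N$ and using consistency of the construction gives a solution on all of $\mathbb R_+$ if needed.

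\textbf{Pathwise uniqueness.} Let $\tilde q$ be another solution with the same $q^m(0)$, $A$ and $D$. Both processes are constant between the $\tau_k$. Between consecutive $\tau_k$ the right-hand side has no jumps: the atoms with $u>B$ never satisfy $u\le\mu^m$, and there are no other atoms in between. At each $\tau_k$ the jump of either process is determined by the common left limit and the common atom. Induction on $k$ then gives $\tilde q=q^m$ on $[0,T]$ almost surely. Alternatively, one can estimate $\E|q^m(t)-\tilde q(t)|$ directly. Since the indicator $\mathbf 1_{\{u\le\mu^m\}}$ is the same for both processes, only the indicator $\mathbf 1_{\{q>0\}}$ differs, and
\[
\E\sup_{s\le t}|q^m(s)-\tilde q(s)|\le B\int_0^t \E|\mathbf 1_{\{q^m(s)>0\}}-\mathbf 1_{\{\tilde q(s)>0\}}|\,\mathrm ds\le B\int_0^t\E|q^m(s)-\tilde q(s)|\,\mathrm ds.
\]
This uses integer values, so $|\mathbf 1_{\{x>0\}}-\mathbf 1_{\{y>0\}}|\le|x-y|$. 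Grönwall's lemma then concludes. To make this rigorous, the expectations must be finite, which requires a localization: either stop at $\tau_R=\inf\{t: q^m(t)\vee\tilde q(t)\ge R\}$, or note that $|q^m-\tilde q|\le A[0,t]+D([0,t]\times[0,B])$, which is integrable.

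\textbf{Main obstacle.} The only delicate point is bookkeeping rather than analysis. One must check that the atoms with $u>B$ contribute nothing, which holds because $\mu^m\le B$. One must also check that continuity of $m$, together with boundedness and continuity of $g$, makes $\mu^m$ predictable, so that replacing $z^-$ by $z$ inside $\mu^m$ is harmless. Once these are checked, the finite-atom recursion handles both existence and uniqueness, and the Grönwall estimate serves as a template for the later fixed-point argument in $m$ for (7).
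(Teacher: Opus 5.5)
Your proof is correct, but it takes a different route from the paper. The paper's proof is essentially a citation. It views $q^m$ as a time-inhomogeneous birth--death process with a deterministic, bounded death rate. Then, ``for the same reasons as in Lemma~A.1'', it appeals to the general existence and pathwise-uniqueness theorem for jump SDEs from \cite{situ2005jumps}, whose hypotheses it justifies only by the boundedness of the rates.

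You instead prove the result by hand:
\begin{itemize}
\item Since $\mu^m\le g(0)=B$, you discard the atoms of $D$ with $u>B$.
\item You observe that $A$ on $[0,T]$ and $D$ on $[0,B]\times[0,T]$ have almost surely finitely many atoms, and build the solution recursively along these atom times.
\item You obtain pathwise uniqueness by induction over the same times, or alternatively by a Gr\"onwall bound using $|\mathbf 1_{\{x>0\}}-\mathbf 1_{\{y>0\}}|\le|x-y|$ on $\mathbb N$.
\end{itemize}
Your version is self-contained and shows exactly where boundedness of $\mu^m$ is used, and where continuity of $m$ is used (so that $\mu^m(z^-)=\mu^m(z)$ is a harmless deterministic predictable rate). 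Your Gr\"onwall argument is also the same coupling and thinning estimate that the paper later relies on in Lemma~\ref{lem:phi-contraction} and Theorem~3.13.

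The paper's Markovian viewpoint has its own advantage: it immediately identifies $q^m$ as the process with generator $\mathcal L_t^m$, which is what Lemma~\ref{lem:phi-continuous} uses through Dynkin's formula. In your approach this identification is not stated. It does, however, follow routinely from the compensation formula applied to your explicit solution.

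One small point in the alternative argument: the localization is unnecessary. The integrand is nonnegative, bounded, and supported in $\{u\le B\}$, and the arrivals cancel in the difference. So $\sup_{s\le t}|q^m(s)-\tilde q(s)|\le D([0,t]\times[0,B])$ already has finite mean $Bt$.
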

\begin{proof}
The two equations above define a birth--death process with an inhomogeneous, deterministic, and bounded death rate. 
For the same reasons as in Lemma~A.1, this process admits an SDE representation.
\end{proof}

We now define the map
\[
\begin{aligned}
\Phi : C([0,T],[0,1]) &\longrightarrow C([0,T],[0,1]) \\
m &\longmapsto \Phi(m),
\end{aligned}
\]
where
\[
\Phi(m)(t)
:=
\mathbb P\left(q^m(t)>0\right),
\qquad t\in[0,T].
\]
We first check that \(\Phi\) is well defined.
\begin{lemma}\label{lem:phi-continuous}
For every \(m\in C([0,T],[0,1])\), the map
\[
t\longmapsto \mathbb P\left(q^m(t)>0\right)
\]
is Lipschitz continuous. In particular, \(\Phi(m)\in C([0,T],[0,1])\).
\end{lemma}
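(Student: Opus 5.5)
Fix \(m\in C([0,T],[0,1])\) and \(0\le s<t\le T\). I will bound the increment of \(\varphi(t):=\mathbb P(q^m(t)>0)\) by the probability that the path of \(q^m\) moves at all on \([s,t]\). Since \(\mathbf 1_{\{q^m(t)>0\}}\) and \(\mathbf 1_{\{q^m(s)>0\}}\) can differ only if \(q^m\) jumps in \((s,t]\), we have
\[
|\varphi(t)-\varphi(s)|
\le \mathbb E\bigl|\mathbf 1_{\{q^m(t)>0\}}-\mathbf 1_{\{q^m(s)>0\}}\bigr|
\le \mathbb P\bigl(q^m \text{ jumps in } (s,t]\bigr).
\]

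Next I control this jump probability through the SDE representation of Lemma 4.3 (equation \eqref{eq:qm}). Every jump of \(q^m\) in \((s,t]\) is either an atom of \(A\) in \((s,t]\), or an atom \((u,z)\) of \(D\) with \(z\in(s,t]\) and \(u\le \mu^m(z^-)\). The rate is bounded as \(\mu^m(z)=g(m(z)\xi_N)\le g(0)=B=\mu^{**}\), because \(g\) is nonincreasing. Therefore any such atom lies in the set \((s,t]\times[0,\mu^{**}]\). This gives
\[
\mathbb P(\text{jump in }(s,t])
\le \mathbb P\bigl(A(s,t]\ge 1\bigr)+\mathbb P\bigl(D((0,\mu^{**}]\times(s,t])\ge 1\bigr)
\le \mathbb E[A(s,t]]+\mathbb E\bigl[D((0,\mu^{**}]\times(s,t])\bigr].
\]
The right-hand side equals \((\lambda+\mu^{**})(t-s)\) by Markov's inequality, or equivalently \(1-e^{-x}\le x\). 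Hence \(\varphi\) is Lipschitz with constant \(\lambda+B\), and this constant is uniform in \(m\), which will also be useful later in the fixed-point argument.

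Finally, \(\varphi\) takes values in \([0,1]\) because it is a probability, so \(\Phi(m)\in C([0,T],[0,1])\). No step here is a real obstacle. The only point needing care is that the thinning indicator \(u\le\mu^m(z^-)\) is dominated uniformly by \(u\le \mu^{**}\). This is what lets us compare with a homogeneous Poisson count even though the death rate depends on time. The bound does not depend on the integrability of \(q^m(0)\).
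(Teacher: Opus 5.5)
Your proof is correct, but it takes a different route from the paper's. The paper argues analytically: it writes the time-dependent generator $\mathcal L^m_t$, applies Dynkin's formula to $h=\mathbf 1_{\{x>0\}}$, and bounds $|\mathcal L^m_u h|\le \lambda+\|\mu^m\|_{\infty,[0,T]}$. You argue pathwise on the Poisson construction \eqref{eq:qm}. The indicator can change only if the path jumps in $(s,t]$, and every such jump is an atom of $A$, or an atom of $D$ in the fixed box $[0,\mu^{**}]\times(s,t]$, so the increment is bounded by the expected number of these atoms.

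Your version is more elementary. It needs no Dynkin formula or martingale property for a time-inhomogeneous chain, which the paper invokes without further comment. It also proves more, namely $\mathbb P\bigl(q^m(s)\neq q^m(t)\bigr)\le(\lambda+B)(t-s)$. Hence $t\mapsto\mathbb E f(q^m(t))$ is Lipschitz with constant $2\|f\|_\infty(\lambda+B)$ for every bounded $f$, and the constant is visibly uniform in $m$. The paper's constant is also uniform, since $\|\mu^m\|_\infty\le g(0)=B$, but the paper does not say so.

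In return, the paper's approach gives an exact identity:
\[
\Phi(m)(t)-\Phi(m)(s)=\int_s^t\bigl(\lambda\,\mathbb P(q^m(u)=0)-\mu^m(u)\,\mathbb P(q^m(u)=1)\bigr)\,\mathrm du .
\]
This shows $\Phi(m)$ is $C^1$ and ties in with the generator and balance-equation reasoning used later for the stationary regime.

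One small wording fix: $(\lambda+\mu^{**})(t-s)$ is the expectation of the two Poisson counts, computed from the intensity measure. Markov's inequality, i.e. $\mathbb P(X\ge 1)\le \mathbb E X$, is what justifies the inequality just before that equality.
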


\begin{proof}
The generator of \(q^m\) at time \(t\) is given, for bounded functions \(h\), by
\[
\mathcal L_t^m h(x)
=
\lambda\big(h(x+1)-h(x)\big)
+
\mu^m(t)\mathbf 1_{\{x>0\}}
\big(h(x-1)-h(x)\big).
\]
Let
\[
h(x)=\mathbf 1_{\{x>0\}}.
\]
By Dynkin's formula, for \(0\leq s\leq t\leq T\),
\[
\mathbb E[h(q^m(t))]
-
\mathbb E[h(q^m(s))]
=
\int_s^t
\mathbb E\left[
\mathcal L_u^m h(q^m(u))
\right]
\,\mathrm du.
\]
Moreover,
\[
\left|\mathcal L_u^m h(x)\right|
\leq
\lambda + \|\mu^m\|_{\infty,[0,T]}.
\]
Therefore,
\[
\left|
\mathbb P(q^m(t)>0)
-
\mathbb P(q^m(s)>0)
\right|
\leq
\left(\lambda+\|\mu^m\|_{\infty,[0,T]}\right)|t-s|.
\]
Thus \(t\mapsto \mathbb P(q^m(t)>0)\) is Lipschitz continuous.
\end{proof}

We next prove that \(\Phi\) is a contraction on a sufficiently small time
interval.

\begin{lemma}\label{lem:phi-contraction}
There exists \(\widetilde T>0\) such that \(\Phi\) is a contraction on
\(C([0,\widetilde T],[0,1])\).
\end{lemma}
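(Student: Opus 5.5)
The plan is a synchronous coupling. For $m_1,m_2\in C([0,\widetilde T],[0,1])$, I build $q^{m_1}$ and $q^{m_2}$ from the same initial condition $q(0)$, the same arrival process $A$ and the same Poisson random measure $D$, exactly as in the SDE of the previous lemma. Then
\[
|\Phi(m_1)(t)-\Phi(m_2)(t)|
\leq \mathbb E\bigl|\mathbf 1_{\{q^{m_1}(t)>0\}}-\mathbf 1_{\{q^{m_2}(t)>0\}}\bigr|
\leq \mathbb P\bigl(\exists s\leq t:\ q^{m_1}(s)\neq q^{m_2}(s)\bigr).
\]
So it suffices to bound the probability that the two coupled paths have decoupled by time $t$.

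\textbf{Decoupling bound.} Before the first decoupling time $\sigma$, the two processes coincide. They can only separate at an atom $(z,u)$ of $D$ with $q(z^-)>0$ and $u$ lying between $\mu^{m_1}(z)$ and $\mu^{m_2}(z)$. Hence
\[
\mathbb P(\sigma\leq t)\leq \mathbb E\int_0^t\int_{\mathbb R_+}\mathbf 1_{\{u\in[\mu^{m_1}(z)\wedge\mu^{m_2}(z),\,\mu^{m_1}(z)\vee\mu^{m_2}(z)]\}}\,\mathrm du\,\mathrm dz
=\int_0^t|\mu^{m_1}(z)-\mu^{m_2}(z)|\,\mathrm dz.
\]
Next I use that $g(x)=B\log_2(1+\frac{1}{1+x})$ is Lipschitz on $[0,\xi_N]$. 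Its derivative is bounded by $B/(2\ln 2)$, since $|g'(x)|=\frac{B}{\ln 2}\frac{1}{(1+x)(2+x)}\leq \frac{B}{2\ln 2}$. This gives
\[
|\mu^{m_1}(z)-\mu^{m_2}(z)|\leq \frac{B\xi_N}{2\ln 2}\,|m_1(z)-m_2(z)|.
\]
Combining, $\|\Phi(m_1)-\Phi(m_2)\|_{\infty,[0,\widetilde T]}\leq \frac{B\xi_N}{2\ln 2}\,\widetilde T\,\|m_1-m_2\|_{\infty,[0,\widetilde T]}$. Choosing $\widetilde T<\frac{2\ln 2}{B\xi_N}$ makes $\Phi$ a contraction.

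\textbf{Main obstacle.} The delicate step is making the "first decoupling" argument rigorous, in particular avoiding circularity in the event that the two paths coincide up to $z^-$. I would handle it by writing the stopped difference $\mathbf 1_{\{\sigma\leq t\}}$ as an integral against $D$ of an indicator that is predictable (it involves $\mathbf 1_{\{z\leq\sigma\}}$ and the deterministic rates). Taking expectations then replaces $D$ by its compensator $\mathrm du\,\mathrm dz$. The initial conditions are identical, so no contribution comes from time $0$.

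Finally, $C([0,\widetilde T],[0,1])$ with the sup norm is complete, and by Lemma~\ref{lem:phi-continuous} $\Phi$ maps this space into itself. This sets up the Banach fixed-point argument used afterwards. Since the contraction constant does not depend on the initial law or on the starting time, the argument can be iterated on successive intervals of length $\widetilde T$ to cover all of $[0,T]$.
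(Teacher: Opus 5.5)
Your proof is correct and follows the paper's argument. You use the same synchronous coupling (same $A$, $D$ and initial condition), the same reduction to the Lebesgue measure of the symmetric difference of the deterministic thinning regions, and the same Lipschitz estimate on $g$, which gives the condition $\widetilde T L\xi_N<1$ (your explicit constant $L=B/(2\ln 2)$ is right). The only variation is that you bound the first-decoupling probability where the paper uses the pathwise inequality $|q^{m_1}(t)-q^{m_2}(t)|\le D(\Delta_{m_1,m_2}(T))$. Since $\Delta_{m_1,m_2}$ is deterministic, the predictability concern you raise disappears once you drop the factor $\mathbf 1_{\{q(z^-)>0\}}$ and use the inclusion $\{\sigma\le t\}\subseteq\{D(\Delta_{m_1,m_2}(t))\ge 1\}$.
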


\begin{proof}
Let \(m_1,m_2\in C([0,T],[0,1])\). We construct \(q^{m_1}\) and \(q^{m_2}\) on
the same probability space, using the same arrival process \(A\), the same
departure Poisson point measure \(D\), and the same initial condition.

For every \(t\in[0,T]\),
\begin{align*}
\left|\Phi(m_1)(t)-\Phi(m_2)(t)\right|
&=
\left|
\mathbb P(q^{m_1}(t)>0)
-
\mathbb P(q^{m_2}(t)>0)
\right| \\
&\leq
\mathbb E\left[
\left|
\mathbf 1_{\{q^{m_1}(t)>0\}}
-
\mathbf 1_{\{q^{m_2}(t)>0\}}
\right|
\right].
\end{align*}
Since \(q^{m_1}(t),q^{m_2}(t)\in\mathbb N\), we have
\[
\left|
\mathbf 1_{\{q^{m_1}(t)>0\}}
-
\mathbf 1_{\{q^{m_2}(t)>0\}}
\right|
\leq
\left|q^{m_1}(t)-q^{m_2}(t)\right|.
\]
Hence
\[
\left|\Phi(m_1)(t)-\Phi(m_2)(t)\right|
\leq
\mathbb E\left[
\left|q^{m_1}(t)-q^{m_2}(t)\right|
\right].
\]

Define the symmetric difference of the two thinning regions up to time \(T\) by
\[
\Delta_{m_1,m_2}(T)
:=
\left\{
(u,z)\in \mathbb R_+\times[0,T]:
\mu^{m_1}(z)\wedge \mu^{m_2}(z)
<
u
\leq
\mu^{m_1}(z)\vee \mu^{m_2}(z)
\right\}.
\]
Under the above coupling, the two processes may differ only because of departure
marks falling in \(\Delta_{m_1,m_2}(T)\). Thus, pathwise,
\[
\left|q^{m_1}(t)-q^{m_2}(t)\right|
\leq
D\left(\Delta_{m_1,m_2}(T)\right),
\qquad t\in[0,T].
\]
Indeed, common arrivals affect both processes in the same way, and departure
marks lying in the common thinning region are used simultaneously, except
possibly when one of the queues is empty; in that case such a departure mark
cannot increase the discrepancy. Therefore only marks in the symmetric
difference of the thinning regions can increase the discrepancy.

Consequently,
\begin{align*}
\left|\Phi(m_1)(t)-\Phi(m_2)(t)\right|
&\leq
\mathbb E\left[D\left(\Delta_{m_1,m_2}(T)\right)\right] \\
&=
\left|\Delta_{m_1,m_2}(T)\right| \\
&=
\int_0^T
\left|\mu^{m_1}(z)-\mu^{m_2}(z)\right|
\,\mathrm dz .
\end{align*}
Since \(g\) is \(L\)-Lipschitz,
\[
\left|\mu^{m_1}(z)-\mu^{m_2}(z)\right|
=
\left|
g(m_1(z)\xi_N)-g(m_2(z)\xi_N)
\right|
\leq
L\xi_N |m_1(z)-m_2(z)|.
\]
Therefore,
\[
\left|\Phi(m_1)(t)-\Phi(m_2)(t)\right|
\leq
T L\xi_N
\|m_1-m_2\|_{\infty,[0,T]}.
\]
Taking the supremum over \(t\in[0,T]\), we obtain
\[
\|\Phi(m_1)-\Phi(m_2)\|_{\infty,[0,T]}
\leq
T L\xi_N
\|m_1-m_2\|_{\infty,[0,T]}.
\]
Hence \(\Phi\) is a contraction on \(C([0,\widetilde T],[0,1])\) as soon as
\[
\widetilde T L\xi_N < 1.
\]
This proves the claim.
\end{proof}

\begin{proposition}\label{prop:local-well-posedness}
The McKean--Vlasov equation \eqref{eq:MF1} admits a unique strong solution on
\([0,\widetilde T]\), for every \(\widetilde T>0\) such that
$
\widetilde T L\xi_N<1.
$
\end{proposition}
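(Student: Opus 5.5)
The plan is to combine the contraction property of \Cref{lem:phi-contraction} with Banach's fixed point theorem, and then to translate a fixed point of $\Phi$ into a solution of \eqref{eq:MF1}.

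\textbf{Completeness of the space.} First I will check that $C([0,\widetilde T],[0,1])$, equipped with the sup norm, is a complete metric space. This holds because it is a closed subset of the Banach space $C([0,\widetilde T],\mathbb R)$.

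\textbf{The map $\Phi$ and its fixed point.} By \Cref{lem:phi-continuous}, $\Phi$ maps this space into itself. By \Cref{lem:phi-contraction}, together with the explicit bound $\|\Phi(m_1)-\Phi(m_2)\|_\infty\le \widetilde T L\xi_N\|m_1-m_2\|_\infty$ obtained there, $\Phi$ is a strict contraction whenever $\widetilde T L\xi_N<1$. Banach's theorem then yields a unique $m^*\in C([0,\widetilde T],[0,1])$ with $\Phi(m^*)=m^*$.

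\textbf{Existence.} Take $q^{m^*}$, the unique strong solution provided by the lemma on \eqref{eq:qm}, driven by $A_{1,i},D_{1,i}$ and the given initial condition. Because $m^*(t)=\mathbb P(q^{m^*}(t)>0)$, the rate $\mu^{m^*}(t)=g(\mathbb P(q^{m^*}(t)>0)\xi_N)$ coincides with $\mu^{MF1}(t)$. Hence $q^{m^*}$ solves \eqref{eq:MF1}.

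To handle all replicas $l$ and sites $i$, I will use that the initial law is translation invariant in $i$. The same $m^*$ therefore serves every index, since the law of $q^{m}$ depends only on the law of the initial condition, which is common to all indices. Each coordinate $q_i^{(MF1,l)}$ is then constructed as $q^{m^*}$ driven by its own Poisson measures $A_{l,i},D_{l,i}$.

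\textbf{Uniqueness.} Let $\tilde q$ be any strong solution on $[0,\widetilde T]$, and set $\tilde m(t):=\mathbb P(\tilde q(t)>0)$.
\begin{itemize}
\item First I will show that $\tilde m$ is continuous with values in $[0,1]$. The Dynkin argument of \Cref{lem:phi-continuous} applies verbatim, because the death rate of $\tilde q$ is deterministic and bounded by $B$.
\item Next, $\tilde q$ solves \eqref{eq:qm} with $m=\tilde m$. By pathwise uniqueness in that lemma, $\tilde q=q^{\tilde m}$ almost surely.
\item Consequently $\Phi(\tilde m)=\tilde m$, so $\tilde m=m^*$ by uniqueness of the fixed point.
\item Finally, $\tilde q=q^{m^*}$ pathwise, again by uniqueness for \eqref{eq:qm}.
\end{itemize}

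\textbf{Main obstacle.} The delicate point is this uniqueness step: I must ensure that an arbitrary solution's $\tilde m$ lies in the space on which $\Phi$ is defined, and that it does not depend on $i$ or $l$. The first is handled by the Lipschitz estimate above. For the second, I would note that the fixed-point argument applies to each $\mathbb P(\tilde q_i^{(MF1,l)}(t)>0)$ separately, so no symmetry assumption on the solution is needed beyond what the definition of $\mu^{MF1}$ already imposes.
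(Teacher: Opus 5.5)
Your proposal is correct and follows the paper's proof: Banach's fixed point theorem for $\Phi$ on $C([0,\widetilde T],[0,1])$, with $q^{m^*}$ as the solution. You also spell out the uniqueness step, which the paper only asserts: any solution's busy-probability curve $\tilde m$ is continuous and, by pathwise uniqueness for \eqref{eq:qm}, a fixed point of $\Phi$. One small correction to your remark on indices: $\mu^{MF1}$ depends only on the law of $q_1^{(MF1,1)}$, so the fixed-point argument is needed for that coordinate alone. Every other coordinate is then unique by pathwise uniqueness for \eqref{eq:qm} with $m=m^*$, so you do not need equality of initial laws across replicas, which the hypotheses do not provide.
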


\begin{proof}
By Lemma~\ref{lem:phi-contraction}, \(\Phi\) is a contraction on
\(C([0,\widetilde T],[0,1])\). Since this space is complete for the uniform norm,
Banach's fixed point theorem yields a unique \(m^*\in C([0,\widetilde T],[0,1])\)
such that
\[
\Phi(m^*)=m^*.
\]
Equivalently,
\[
m^*(t)=\mathbb P(q^{m^*}(t)>0),
\qquad t\in[0,\widetilde T].
\]
The process \(q^{m^*}\) therefore satisfies
\[
q^{m^*}(t)
=
q^{m^*}(0)
+
A[0,t]
-
\int_0^t\int_{\mathbb R_+}
\mathbf 1_{\{q^{m^*}(z^-)>0\}}
\mathbf 1_{\{u\leq g(m^*(z^-)\xi_N)\}}
\,D(\mathrm du,\mathrm dz),
\]
with
\[
m^*(t)=\mathbb P(q^{m^*}(t)>0).
\]
Thus \(q^{m^*}\) is a strong solution of the McKean--Vlasov equation on
\([0,\widetilde T]\). Uniqueness follows from the uniqueness of the fixed point.
\end{proof}

\begin{proof}[Proof of Theorem~3.12]
The previous argument is local in time. However, the contraction time
\(\widetilde T\) depends only on \(L\) and \(\xi_N\), and not on the initial
condition. Therefore, after constructing the solution on \([0,\widetilde T]\),
one can restart the argument at time \(\widetilde T\), using
\(q^{m^*}(\widetilde T)\) as initial condition.

Repeating this procedure on successive intervals of length \(\widetilde T\)
yields existence and uniqueness on any finite time interval \([0,T]\). Hence the
McKean--Vlasov equation \eqref{eq:MF1} admits a strong solution and is pathwise
unique.
\end{proof}

\begin{proof}[Proof of Theorem 3.13]
   We define $
I_i^{(MF1,r)}(t)
:=
\sum_{j\neq i}
\mathbf 1_{\{q_j^{(MF1,r)}(t)>0\}}
l(d(i,j))
$ and remark that $
\mu^{MF1}(t)
=
g\left(
\mathbb E\left[I_i^{(MF1,1)}(t)\right]
\right).
$

Fix \(T>0\) and \(i\in\{1,\dots,N\}\). Since the two processes
\(q_i^{(K,1)}\) and \(q_i^{(MF1,1)}\) are driven by the same arrivals and the
same Poisson point measure, the arrivals and the initial conditions cancel in the difference.

Moreover, departures accepted by both processes simultaneously cannot increase
the distance between the two queue lengths. Hence the two processes can differ
only through Poisson points which are accepted by one of the two thinning
procedures but not by the other. Therefore, pathwise,
We introduce the notations
\begin{equation}\label{eq:def-mu-min-max}
\underline{\mu}_i(z^-) := \mu_i^K(Q^K(z^-))\wedge \mu^{MF1}(z^-),
\qquad
\overline{\mu}_i(z^-) := \mu_i^K(Q^K(z^-))\vee \mu^{MF1}(z^-).
\end{equation}
With these notations, we obtain the pathwise control
\begin{align}\label{eq:pathwise-control-one}
\sup_{0\leq s\leq t}
\left|q_i^{(K,1)}(s)-q_i^{(MF1,1)}(s)\right|
&\leq
\int_0^t\!\!\int_{\mathbb R_+}
\mathbf 1_{\left\{
\underline{\mu}_i(z^-)
<
u
\leq
\overline{\mu}_i(z^-)
\right\}}
\,D_{1,i}(\mathrm du,\mathrm dz).
\end{align}

Taking expectations and using the compensation formula for Poisson point
measures yields
\begin{align}\label{eq:first-bound-one}
\mathbb E\left[
\sup_{0\leq s\leq t}
\left|q_i^{(K,1)}(s)-q_i^{(MF1,1)}(s)\right|
\right]
&\leq
\int_0^t
\mathbb E\left[
\left|\mu_i^K(Q^K(z))-\mu^{MF1}(z)\right|
\right]
\,\mathrm dz.
\end{align}
By the Lipschitz continuity of \(g\),
\begin{align}\label{eq:rate-difference}
\mathbb E\left[
\left|\mu_i^K(Q^K(z))-\mu^{MF1}(z)\right|
\right]
&\leq
L\,
\mathbb E\Bigg[
\Bigg|
\frac{1}{K}
\sum_{r=1}^K
\sum_{j\neq i}
\mathbf 1_{\{q_j^{(K,r)}(z)>0\}}
l(d(i,j))
-
\mathbb E\left[I_i^{(MF1,1)}(z)\right]
\Bigg|
\Bigg].
\end{align}
We decompose the right-hand side as
\begin{align*}
&\mathbb E\Bigg[
\Bigg|
\frac{1}{K}
\sum_{r=1}^K
\sum_{j\neq i}
\mathbf 1_{\{q_j^{(K,r)}(z)>0\}}
l(d(i,j))
-
\mathbb E\left[I_i^{(MF1,1)}(z)\right]
\Bigg|
\Bigg]
\\
&\leq
\mathbb E\Bigg[
\Bigg|
\frac{1}{K}
\sum_{r=1}^K
\sum_{j\neq i}
\left(
\mathbf 1_{\{q_j^{(K,r)}(z)>0\}}
-
\mathbf 1_{\{q_j^{(MF1,r)}(z)>0\}}
\right)
l(d(i,j))
\Bigg|
\Bigg]
\\
&\quad+
\mathbb E\left[
\left|
\frac{1}{K}
\sum_{r=1}^K
I_i^{(MF1,r)}(z)
-
\mathbb E\left[I_i^{(MF1,1)}(z)\right]
\right|
\right].
\end{align*}
For the first term, since \(q_j^{(K,r)}(z)\) and \(q_j^{(MF1,r)}(z)\) are
integer-valued,
\[
\left|
\mathbf 1_{\{q_j^{(K,r)}(z)>0\}}
-
\mathbf 1_{\{q_j^{(MF1,r)}(z)>0\}}
\right|
\leq
\left|
q_j^{(K,r)}(z)-q_j^{(MF1,r)}(z)
\right|.
\]
Therefore, using equality in law in \(j\) and \(r\),
\begin{align}\label{eq:first-term-bound}
&\mathbb E\Bigg[
\Bigg|
\frac{1}{K}
\sum_{r=1}^K
\sum_{j\neq i}
\left(
\mathbf 1_{\{q_j^{(K,r)}(z)>0\}}
-
\mathbf 1_{\{q_j^{(MF1,r)}(z)>0\}}
\right)
l(d(i,j))
\Bigg|
\Bigg]
\nonumber\\
&\leq
\xi_N
\mathbb E\left[
\left|
q_i^{(K,1)}(z)-q_i^{(MF1,1)}(z)
\right|
\right].
\end{align}
For the second term, the random variables \(I_i^{(MF1,r)}(z)\) are independent
with respect to \(r\), identically distributed, and bounded by \(\xi_N\). Hence,
by Cauchy--Schwarz,
\begin{align}\label{eq:fluctuation-bound}
\mathbb E\left[
\left|
\frac{1}{K}
\sum_{r=1}^K
I_i^{(MF1,r)}(z)
-
\mathbb E\left[I_i^{(MF1,1)}(z)\right]
\right|
\right]
&\leq
\left(
\operatorname{Var}
\left(
\frac{1}{K}
\sum_{r=1}^K
I_i^{(MF1,r)}(z)
\right)
\right)^{1/2}
\nonumber\\
&=
\frac{1}{\sqrt K}
\left(
\operatorname{Var}
\left(I_i^{(MF1,1)}(z)\right)
\right)^{1/2}
\nonumber\\
&\leq
\frac{\xi_N}{\sqrt K}.
\end{align}
Combining \eqref{eq:first-bound-one}, \eqref{eq:rate-difference},
\eqref{eq:first-term-bound}, and \eqref{eq:fluctuation-bound}, we obtain
\[
\mathbb E\left[
\sup_{0\leq s\leq t}
\left|q_i^{(K,1)}(s)-q_i^{(MF1,1)}(s)\right|
\right]
\leq
L\xi_N
\int_0^t
\mathbb E\left[
\left|q_i^{(K,1)}(z)-q_i^{(MF1,1)}(z)\right|
\right]
\,\mathrm dz
+
\frac{L\xi_N t}{\sqrt K}.
\]
Define
\[
\delta_K(t)
:=
\mathbb E\left[
\sup_{0\leq s\leq t}
\left|q_i^{(K,1)}(s)-q_i^{(MF1,1)}(s)\right|
\right].
\]
Since
\[
\mathbb E\left[
\left|q_i^{(K,1)}(z)-q_i^{(MF1,1)}(z)\right|
\right]
\leq
\delta_K(z),
\]
we get
\[
\delta_K(t)
\leq
L\xi_N
\int_0^t \delta_K(z)\,\mathrm dz
+
\frac{L\xi_N t}{\sqrt K}.
\]
By Grönwall's lemma,
\[
\delta_K(T)
\leq
\frac{L\xi_N T}{\sqrt K}
\exp(L\xi_N T).
\]
In particular,
\[
\mathbb E\left[
\sup_{0\leq t\leq T}
\left|q_i^{(K,1)}(t)-q_i^{(MF1,1)}(t)\right|
\right]
\xrightarrow[K\to\infty]{}0.
\]

Let \(d_{\mathrm{Sk}}\) denote the usual Skorokhod distance on
\(D([0,T],\mathbb N)\), and define its truncated version by
\[
\bar d_{\mathrm{Sk}}(x,y)
:=
d_{\mathrm{Sk}}(x,y)\wedge 1.
\]
Since the Skorokhod distance is bounded from above by the uniform distance,
we have, for all \(x,y\in D([0,T],\mathbb N)\),
\[
\bar d_{\mathrm{Sk}}(x,y)
\leq
d_{\mathrm{Sk}}(x,y)
\leq
\sup_{0\leq t\leq T}|x(t)-y(t)|.
\]
We denote the two coupled paths by
\begin{equation}\label{eq:def-paths}
Q_i^{K} := \left(q_i^{(K,1)}(t)\right)_{0\leq t\leq T},
\qquad
Q_i^{MF1} := \left(q_i^{(MF1,1)}(t)\right)_{0\leq t\leq T}.
\end{equation}
Hence, using the coupling constructed above,
\[
W_1^{\bar d_{\mathrm{Sk}}}
\left(
\mathcal L\left(Q_i^{K}\right),
\mathcal L\left(Q_i^{MF1}\right)
\right)
\leq
\mathbb E\left[
\bar d_{\mathrm{Sk}}
\left(
Q_i^{K},
Q_i^{MF1}
\right)
\right].
\]

Therefore,
\[
W_1^{\bar d_{\mathrm{Sk}}}
\left(
\mathcal L\left(Q_i^{K}\right),
\mathcal L\left(Q_i^{MF1}\right)
\right)
\leq
\delta_K(T)
\leq
\frac{L\xi_N T}{\sqrt K}
\exp(L\xi_N T).
\]
Consequently,
\[
W_1^{\bar d_{\mathrm{Sk}}}
\left(
\mathcal L\left(Q_i^{K}\right),
\mathcal L\left(Q_i^{MF1}\right)
\right)
\xrightarrow[K\to\infty]{}0.
\]

Since the truncated distance \(\bar d_{\mathrm{Sk}}\) is bounded and induces the
same topology as the usual Skorokhod distance \(d_{\mathrm{Sk}}\), convergence
in the Wasserstein distance \(W_1^{\bar d_{\mathrm{Sk}}}\) is equivalent to weak
convergence of the laws on \(D([0,T],\mathbb N)\) (see \cite{villani2008optimal}). Therefore,
\[
\left(q_i^{(K,1)}(t)\right)_{0\leq t\leq T}
\xrightarrow[K\to\infty]{d}
\left(q_i^{(MF1,1)}(t)\right)_{0\leq t\leq T}
\]
in \(D([0,T],\mathbb N)\) endowed with the Skorokhod topology.
\end{proof}

\begin{proof}[Proof of Theorem 3.14]
We couple the finite replica system and the McKean--Vlasov system as in
Theorem~3.13, using the same initial conditions, arrival processes
and Poisson point measures.

For every \(i\in\{1,\dots,N\}\), Theorem~3.13 gives
\[
\mathbb E\left[
\sup_{0\leq t\leq T}
\left|q_i^{(K,1)}(t)-q_i^{(MF1,1)}(t)\right|
\right]
\leq
\frac{L\xi_N T}{\sqrt K}
\exp(L\xi_N T).
\]
Hence,
\[
\begin{aligned}
\mathbb E\left[
\sup_{1\leq i\leq N}
\sup_{0\leq t\leq T}
\left|q_i^{(K,1)}(t)-q_i^{(MF1,1)}(t)\right|
\right]
&\leq
\sum_{i=1}^N
\mathbb E\left[
\sup_{0\leq t\leq T}
\left|q_i^{(K,1)}(t)-q_i^{(MF1,1)}(t)\right|
\right]
\\
&\leq
\frac{N L\xi_N T}{\sqrt K}
\exp(L\xi_N T).
\end{aligned}
\]
This converges to \(0\) as \(K\to\infty\).

Therefore, by the same argument as in Theorem~3.13, using the
Wasserstein distance associated with the truncated product Skorokhod distance,
we obtain
\[
\left(
\left(q_i^{(K,1)}(t)\right)_{0\leq t\leq T}
\right)_{i=1}^N
\xrightarrow[K\to\infty]{d}
\left(
\left(q_i^{(MF1,1)}(t)\right)_{0\leq t\leq T}
\right)_{i=1}^N
\]
in \(D([0,T],\mathbb N)^N\), endowed with the product Skorokhod topology.

It remains to identify the law of the limiting vector. In the McKean--Vlasov
system, the rate \(\mu^{MF1}(t)\) is deterministic. Moreover, the arrival
processes \(A_{1,i}\), the Poisson point measures \(D_{1,i}\), and the initial
conditions \(q_i^{(MF1,1)}(0)\) are independent with respect to \(i\). Hence the
processes
\[
\left(q_i^{(MF1,1)}(t)\right)_{0\leq t\leq T},
\qquad i=1,\dots,N,
\]
are independent, and their joint law is
\[
\bigotimes_{i=1}^N
\mathcal L\left(
\left(q_i^{(MF1,1)}(t)\right)_{0\leq t\leq T}
\right).
\]
This proves the theorem.
\end{proof}

\begin{proof}[Proof of Theorem 3.16]
    We proceed by an analytic–synthetic proof. Suppose that the solution of (7) admits a stationary distribution and initialise from it. Then the solution of (7) is also the solution of an M/M/1 queue.  Being a solution of an M/M/1 queue and being in stationary regime impose that the stationary distribution is  $\pi_\lambda$ parameterized by the unique solution of (8). Consider now that the system is initialised by $\pi_\lambda$ for the corresponding M/M/1 queue. By the balance equation, it is in stationary regime. Moreover it satisfies (7) hence it is a stationary measure for (7).
\end{proof}

We now derived the proof concerning the second McKean-Vlasov approximation. 

\begin{proof}[Proof of Proposition 3.19]
Assume that $\lambda < \mu^*$,

Fix \(p\in[0,1]\). In stationarity, the rate conservation equation gives
\[
\lambda
=
\mathbb E\left[
\mu_i^p(\Theta)\mathbf 1_{\{q_i^p>0\}}
\right].
\]
By construction, the environment \(\Theta\) used at a potential service epoch
is sampled independently of the current queue state. Therefore,
\[
\mathbb E\left[
\mu_i^p(\Theta)\mathbf 1_{\{q_i^p>0\}}
\right]
=
\mathbb E_{\nu_p}\left[\mu_i^p(\Theta)\right]
\mathbb P(q_i^p>0).
\]
Hence
\[
\mathbb P(q_i^p>0)
=
\frac{\lambda}{\overline\mu_i(p)}.
\]

Define
\[
G(p):=\frac{\lambda}{\overline\mu_i(p)}-p.
\]
We now show that \(G\) has a zero on \([0,1]\).

The map \(p\mapsto \overline\mu_i(p)\) is continuous. Indeed,
\[
\overline\mu_i(p)
=
\sum_{\theta\in\{0,1\}^{N-1}}
g\left(
\sum_{m\neq i}\theta_m l(d(i,m))
\right)
p^{|\theta|}(1-p)^{N-1-|\theta|},
\]
where
\[
|\theta|=\sum_{m\neq i}\theta_m.
\]
Thus \(\overline\mu_i(p)\) is a finite sum of continuous functions of \(p\).
Moreover, \(\overline\mu_i(p)>0\) for all \(p\in[0,1]\), so \(G\) is continuous.

At \(p=0\), we have
\[
G(0)
=
\frac{\lambda}{\overline\mu_i(0)}
>0.
\]
At \(p=1\), all potential interferers are active, hence
\[
\overline\mu_i(1)=\mu^*.
\]
Therefore,
\[
G(1)
=
\frac{\lambda}{\mu^*}-1
<0,
\]
because \(\lambda<\mu^*\).

By the intermediate value theorem, there exists \(p^\star\in(0,1)\) such that
\[
G(p^\star)=0.
\]
Equivalently,
\[
p^\star
=
\frac{\lambda}{\overline\mu_i(p^\star)}
=
\mathbb P(q_i^{p^\star}>0).
\]
This proves the result.
\end{proof}
\begin{proof}[Proof of proposition 3.23]
Take $\lambda<\mu^*$. By the balance equation and the autonomy of $\mu_i^{MF2}$ with respect to $Q^{MF2}$ we have that $$\mathbb{P}(q^{MF2}>0)=\frac{\lambda}{\mathbb{E}[\mu^{MF2}]}.$$ 
Now remark that $g:x\mapsto\log(1+\frac{1}{1+x})$ is convex since $g''>0$. Hence Jensen's inequality gives us : $$\mathbb{E}\!\left[\mu^{\mathrm{MF2}}\right]
\geq
B \log\!\left(
1 + \frac{1}{1 + \xi_N \, \mathbb{P}\!\left(q^{\mathrm{MF2}} > 0\right)}
\right).$$ 
Combining the last two equations we end up with: $$\mathbb{P}(q^{MF2}>0)\leq \frac{\lambda}{B\log(1+\frac{1}{1+\xi_N\mathbb{P}(q^{MF2}>0)})}.$$ Now using the analysis we did in Lemma 3.15 with $f:=x\mapsto \frac{\lambda}{x}-B\log(1+\frac{1}{1+\xi_Nx})$, we have $f(\mathbb{P}(q^{MF2}>0))\geq0$ and because of the property of $f$, it implies that \begin{align}
    \mathbb{P}(q^{MF2}>0)&\leq x^* \notag\\
    \mathbb{P}(q^{MF2}>0)&\leq\pi_\lambda(q>0)
\end{align}
    
\end{proof}

\begin{proof}[Proof of proposition 3.25]
    It is a direct consequence of the result of \cite{BaccelliMakowski1986}.
\end{proof}

\begin{proof}[Proof of proposition 3.26]
    Use Jensen for the departure rate of the average Mckean--Vlasov then proposition 3.23 and finally Lemma A.3. 
\end{proof}

\subsection{Long Time behavior}

We prove a Foster--Lyapunov drift condition for the original system. We then use
the exponential ergodicity criterion of \cite{MeynTweedie1993}. In the present
countable-state setting, the petite set condition is automatically satisfied
for finite sets. Therefore, once a drift condition of the form
\[
\mathcal L V \leq -cV + b\mathbf 1_C
\]
has been established with \(C\) finite, the process is \(V\)-uniformly
exponentially ergodic.

We denote by \(\mathbb P_x^{FB}\) the law of the full-buffer process started
from \(x\), namely the process consisting of \(N\) independent
\(M/M/1(\lambda,\mu^*)\) queues. Its generator is denoted by
\(\mathcal L^{FB}\).

The Lyapunov function introduced below was obtained with the assistance of an AI tool (ChatGPT Thinking 5.2 ). 

\begin{lemma}
Assume that \(\lambda<\mu^*\).
There exists \(r>1\) such that the function
\[
V(x)=\sum_{i=1}^N r^{x_i}, \qquad x\in\mathbb N^N,
\]
satisfies the Foster--Lyapunov drift condition
\[
\mathcal LV(x)\leq \mathcal L^{FB}V(x)\leq -cV(x)+b\mathbf 1_C(x),
\]
for some constants \(c,b>0\) and some finite set \(C\subset \mathbb N^N\).
\end{lemma}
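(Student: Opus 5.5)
The plan is to compute $\mathcal L V$ coordinate by coordinate, since $V$ is a sum of functions of single coordinates. For $x\in\mathbb N^N$,
\[
\mathcal L V(x)=\sum_{i=1}^N\Big[\lambda\,(r^{x_i+1}-r^{x_i}) + \mu_i(x)\mathbf 1_{\{x_i>0\}}\,(r^{x_i-1}-r^{x_i})\Big].
\]
The death term is nonpositive because $r>1$. Every service rate satisfies $\mu_i(x)\geq \mu^*$, since $g$ is nonincreasing and the interference is at most $\xi_N$. Replacing $\mu_i(x)$ by $\mu^*$ can therefore only increase the expression. This gives the first inequality $\mathcal LV(x)\leq \mathcal L^{FB}V(x)$ pointwise. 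This comparison is the only place where the interaction enters, and it uses only the bounds $\mu^*\leq\mu_i\leq\mu^{**}$ already exploited in Lemma 4.1.

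Next I treat the full-buffer generator, which is a sum of independent $M/M/1(\lambda,\mu^*)$ generators acting on $r^{x_i}$. For $x_i>0$ the $i$-th term equals
\[
r^{x_i}\Big(\lambda(r-1)+\mu^*(r^{-1}-1)\Big)=:-\gamma(r)\,r^{x_i},
\]
while for $x_i=0$ it equals $\lambda(r-1)$. The function $\phi(r)=\lambda(r-1)+\mu^*(r^{-1}-1)$ vanishes at $r=1$ and has derivative $\lambda-\mu^*<0$ there. It is therefore strictly negative on $(1,\mu^*/\lambda)$. I fix any such $r$, for instance $r=\sqrt{\mu^*/\lambda}$, and set $\gamma:=-\phi(r)>0$.

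Summing over coordinates gives
\[
\mathcal L^{FB}V(x)\leq -\gamma\sum_{i:x_i>0} r^{x_i} + N\lambda(r-1) = -\gamma V(x) + \sum_{i:x_i=0}(\gamma+\lambda(r-1)),
\]
since each empty coordinate contributes $r^0=1$ to $V$. The last term is bounded by $N(\gamma+\lambda(r-1))$ uniformly in $x$. To obtain the form with an indicator, I take $c=\gamma/2$ and absorb this constant into $-\tfrac{\gamma}{2}V$ whenever $V(x)\geq 2N(\gamma+\lambda(r-1))/\gamma$. Since $V(x)\to\infty$ as $|x|\to\infty$, the set $C=\{x: V(x)< 2N(\gamma+\lambda(r-1))/\gamma\}$ is finite. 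On $C$ the drift is bounded by the constant $b=N(\gamma+\lambda(r-1))$.

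No step is genuinely hard. The one point needing care is the boundary: empty coordinates produce a positive contribution $\lambda(r-1)$ that the exponential decay cannot absorb. The argument handles this by bounding it uniformly and confining it to the finite set $C$. One should also note that $V$ is unbounded, whereas the generator was stated for bounded $f$. Applying $\mathcal L$ to $V$ is justified because the jumps are bounded and the rates are bounded. In the Meyn--Tweedie framework the extended generator is what is needed, and the expression above holds pointwise.
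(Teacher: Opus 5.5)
Your proof is correct and matches the paper's argument essentially line by line. Both use the pointwise comparison $\mathcal LV\le\mathcal L^{FB}V$ from $\mu_i\ge\mu^*$, the choice of $r>1$ with $\lambda(r-1)+\mu^*(r^{-1}-1)<0$, the bound $\mathcal L^{FB}V\le -\gamma V+N(\gamma+\lambda(r-1))$ (your constant is exactly the paper's $B$), and absorption outside a sublevel set of $V$ with $c=\gamma/2$.

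Two cosmetic points. First, in your displayed chain the middle expression should carry $\#\{i:x_i=0\}$ instead of $N$ for the stated equality to hold; with that change the chain becomes an identity for $\mathcal L^{FB}V$. Second, negativity of $\phi$ on all of $(1,\mu^*/\lambda)$ does not follow from $\phi'(1)<0$ alone, but it does follow directly from $\phi(r)=(r-1)(\lambda-\mu^*/r)$.
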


\begin{remark}
The function \(V\) is unbounded, whereas the operator \(\mathcal L^{FB} \)was initially
defined on bounded test functions. Throughout this argument, \(\mathcal L^{FB} \) is
understood as the extended generator. For the present countable-state,
non-explosive jump process, with bounded jump, the extended generator coincides pointwise with the
formal jump operator. In particular, every pointwise finite function \(f\) for
which \(\mathcal L^{FB}  f(x)\) is finite for all \(x\) belongs to the domain of the
extended generator; see \cite{MEYN1993518}.
\end{remark}

\begin{proof}{Theorem 3.28}
    By the previous lemma, the process satisfies a Foster--Lyapunov drift condition
of the form
\[
\mathcal LV \leq -cV+b\mathbf 1_C,
\]
where \(C\) is finite. Since the process is irreducible on a countable state
space, every finite set is petite. Therefore the exponential ergodicity theorem
of \cite{MeynTweedie1993} applies and yields constants \(M<\infty\) and
\(\kappa>0\) such that
\[
\|\delta_x P_t-\pi_\lambda\|_{TV}
\leq M V(x)e^{-\kappa t},
\qquad t\geq 0.
\]
The result follows by taking \(C_x=MV(x)\).
\end{proof}

\subsection{Random positions}

\begin{lemma}
    For any $\epsilon>0$, $\exists N$ such that $\forall 0<\lambda<B\log_2(1+\frac{1}{1+\frac{N(N-1)}{2a^\alpha}})$, we have $x^*(s)=\frac{\lambda}{B/\ln(2)-\lambda\frac{N(N-1)}{2s^\alpha}}+\epsilon$
\end{lemma}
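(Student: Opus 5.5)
The plan is to linearize the logarithm in the fixed-point equation of Lemma~3.15 and then control how far the root moves. I write $\xi=\xi_N(s)$ for the worst-case interference at spacing $s$; as in the statement, it is of order $N(N-1)/(2s^\alpha)$. By Lemma~3.15, $x^*(s)\in(0,1)$ is the unique zero of
\[
f(x)=\frac{\lambda}{x}-B\log_2\Bigl(1+\frac{1}{1+x\xi}\Bigr).
\]
Replacing $\log(1+y)$ by $y$ gives the comparison function
\[
h(x)=\frac{\lambda}{x}-\frac{B/\ln 2}{1+x\xi}.
\]
Its zero is explicit: $\lambda(1+x\xi)=(B/\ln 2)\,x$, that is, $\bar x=\lambda/(B/\ln2-\lambda\xi)$. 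This is exactly the main term of the claimed formula. The hypothesis on $\lambda$, namely $\lambda<g(\xi)\le (B/\ln2)/(1+\xi)$, guarantees $\lambda\xi<B/\ln 2$. Hence $\bar x$ is well defined and positive, and one checks $\bar x<1$. So the lemma reduces to showing $|x^*-\bar x|\le\epsilon$ uniformly in the admissible $\lambda$ once $N$ is large.

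First, bound the perturbation. Put $y(x)=1/(1+x\xi)\in(0,1]$. The inequality $0\le y-\log(1+y)\le y^2/2$ gives
\[
0\le f(x)-h(x)\le \frac{B}{2\ln 2}\,y(x)^2 .
\]
Evaluated at $\bar x$, this gives $f(\bar x)\ge 0$. Since $f$ is decreasing through its zero, this places $x^*\ge\bar x$, so only an upper bound on $x^*-\bar x$ is needed.

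Second, convert the residual $f(\bar x)\le \frac{B}{2\ln2}y(\bar x)^2$ into a displacement of the root. I would lower-bound $|f'|$ on $[\bar x,x^*]$. Both $\lambda/x$ and the service term $g(x\xi)$ are monotone there. A mean-value argument gives $|f'(x)|\ge \lambda/x^2 - |{\partial_x g(x\xi)}|$. Using $\lambda=\bar x(B/\ln2)\,y(\bar x)$, this is bounded below by a quantity comparable to $(B/\ln2)\,y/\bar x$ times a factor bounded away from zero. The resulting estimate should be of the form
\[
x^*-\bar x\lesssim y(\bar x)\,\bar x\le \bar x\le 1,
\]
and more usefully $x^*-\bar x\lesssim \bar x\,y(\bar x)$. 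This is where the dependence on $N$ has to enter.

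Third, split according to the size of $\bar x\,\xi$. If $\bar x\xi\ge M$, then $y\le 1/M$, so $x^*-\bar x\lesssim 1/M$; taking $M$ of order $1/\epsilon$ settles this case. If $\bar x\xi< M$, then $\bar x< M/\xi$, and since $\xi\to\infty$ as $N\to\infty$ (for $s\le b$ fixed), $\bar x\to 0$ uniformly. In that case I would bound $x^*\le \lambda/\mu^*\cdot(\text{const})$ directly from $x^*=\lambda/g(x^*\xi)$, to conclude that $x^*$ is also $O(M/\xi)$ and hence both roots are below $\epsilon$ for $N$ large.

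The main obstacle is uniformity in $\lambda$ near the stability boundary $\lambda\uparrow g(\xi)$, where $x^*\to 1$. There the derivative bound in the second step degenerates, because $\lambda/x^2$ and $\partial_x g$ become comparable. I would first try to handle this through the fact that $x\xi\approx\xi\to\infty$ in that regime, which makes $y=O(1/\xi)$. The quadratic residual $O(1/\xi^2)$ should then beat the vanishing slope, which is only of order $1/\xi$ times $\lambda$. If that fails, I would shrink the $\lambda$-range slightly, in the spirit of the condition in the statement.
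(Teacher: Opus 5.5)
Your Step 2 is where the argument fails, and it fails in exactly the case $\bar x\xi\ge M$ of Step 3 where you use it. Write $c=B/\ln 2$ and $y=1/(1+\bar x\xi)$, so that $\lambda=c\bar x y$. One computes exactly
\[
|f'(\bar x)|=\frac{\lambda}{\bar x^{2}}-\frac{c\,\xi}{(1+\bar x\xi)(2+\bar x\xi)}=\frac{c\,y}{\bar x}\cdot\frac{2}{2+\bar x\xi}.
\]
So your ``factor bounded away from zero'' is $2/(2+\bar x\xi)$, which vanishes when $\bar x\xi$ is large. Dividing the residual $\tfrac{c}{2}y^{2}$ by this slope gives a displacement of order $\bar x$, not $\bar x\,y$.

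This is not merely a lossy estimate. With $u=x\xi\to\infty$, the true root satisfies $\lambda\xi/c=u\log\bigl(1+\frac{1}{1+u}\bigr)=1-\frac{3}{2u}+O(u^{-2})$. The linearized root satisfies $\lambda\xi/c=1-\frac{1}{\bar u}+O(\bar u^{-2})$. Hence $\bar x\approx\frac23x^*$ whenever $x^*$ is of order one.

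Concretely, take $s=a$, $\xi=\xi(a)$, $y_1=1/(1+\xi)$, and let $\lambda\uparrow g(\xi(a))=c\log(1+y_1)$, which the hypothesis allows. Then $x^*(a)\to1$, while $\bar x\to\log(1+y_1)/\bigl(1-\xi\log(1+y_1)\bigr)\le\log 2$; for instance this limit is about $0.667$ when $\xi=100$. So for $\epsilon<0.3$ no $N$ works: the lemma is false at $s=a$ and not uniform as $s\downarrow a$.

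Your hope that the quadratic residual beats the slope at the boundary fails because both are of order $c/\xi^{2}$; indeed $|f'(1)|\approx\frac{3c}{2\xi^{2}}$. The paper's own proof is only the first-order expansion $\log(1+y)\approx y$ with no error control, so it has the same defect. Your proposal inherits that defect rather than repairing it.

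What can actually be proved uses the hypothesis you replaced. The statement assumes $\lambda<g(\xi(a))$, not $\lambda<g(\xi(s))$, and the gap $\xi(a)-\xi(s)$ is the only source of smallness.

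From $\frac{y}{1+y}\le\log(1+y)\le y$ you get $\frac{c}{2+u}\le g(u)\le\frac{c}{1+u}$. Inserting this into $\lambda=x^*g(x^*\xi(s))$ gives the bracket $\bar x\le x^*\le2\bar x$ for every admissible $\lambda$, with no derivative estimate. Moreover, $\lambda<c/(1+\xi(a))$ yields $\bar x<1/\bigl(1+\xi(a)-\xi(s)\bigr)$, whence
\[
0\le x^*(s)-\bar x(s)\le\frac{1}{1+\frac{N(N-1)}{2}\bigl(a^{-\alpha}-s^{-\alpha}\bigr)}.
\]
This tends to $0$ uniformly in $\lambda$ and in $s\in[a+\delta,b]$ for each fixed $\delta>0$. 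More generally, it does so whenever $\lambda\xi(s)/c$ stays bounded away from $1$, but not at $s=a$.

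In this regime $\bar x\,\xi(s)$ stays bounded, so $y$ is not small and the linearization is not accurate in relative terms. The absolute error vanishes only because both roots are $O(1/\xi(s))$. A correct version of the lemma must therefore restrict $s$ (or $\lambda$) in this way. That is a genuine change of statement, not the slight shrinking of the $\lambda$-range you mention at the end.
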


\begin{proof}
    Do the Taylor expansion of order 1 of $\log(1+x)$ in:$$\frac{B}{\ln(2)}\log(1+\frac{1}{1+x^*(s)\frac{N(N-1)}{2s^\alpha}})=\frac{\lambda}{x^*(s)}.$$
\end{proof}

\begin{lemma}
    For any $\epsilon>0$, $\exists N$ such that  $\forall 0<\lambda<B\log_2(1+\frac{1}{1+\frac{N(N-1)}{2a^\alpha}})$, $$\mathbb{E}^{MF}[q|S=s]=\frac{\lambda}{B/\ln(2)-\lambda(1+\frac{N(N-1)}{2s^\alpha})}+ \epsilon .$$ 
\end{lemma}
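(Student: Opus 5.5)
The statement will follow from Theorem 3.16 combined with the previous lemma on $x^*(s)$. Conditionally on $S=s$, Theorem 3.16 says the stationary law of $Q^{(MF1,1)}$ has geometric marginals with parameter $1-x^*(s)$, where $x^*(s)$ solves (8) with $\xi_N$ replaced by the interference constant at spacing $s$. Hence
\[
\mathbb{E}^{MF}[q\mid S=s]=\frac{x^*(s)}{1-x^*(s)}.
\]
The first step is to substitute the approximation of the previous lemma, $x^*(s)=\frac{\lambda}{B/\ln(2)-\lambda\frac{N(N-1)}{2s^\alpha}}+\epsilon'$. The map $\phi(x)=\frac{x}{1-x}$ is smooth on $[0,1)$, so an $\epsilon'$ error in $x^*$ becomes an error of size at most $\sup|\phi'|\,\epsilon'$ in the conditional mean. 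Writing $\beta=B/\ln 2$ and $\xi=\frac{N(N-1)}{2s^\alpha}$, the main term is
\[
\phi\!\left(\frac{\lambda}{\beta-\lambda\xi}\right)=\frac{\lambda}{\beta-\lambda\xi-\lambda}=\frac{\lambda}{\beta-\lambda(1+\xi)},
\]
which is exactly the claimed expression.

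Uniformity is the key point. I need $\phi'(x)=(1-x)^{-2}$ to stay bounded along the segment between $x^*(s)$ and its approximation, uniformly in $\lambda$ in the stated range and in $s\in[a,b]$. I therefore plan to choose $\epsilon'$ as a function of $\epsilon$ and a lower bound $\eta$ on $1-x^*$, so that $\sup|\phi'|\,\epsilon'\le\epsilon$. By Lemma 3.15, $x^*(s)<1$ whenever $\lambda<B\log_2(1+\frac{1}{1+\xi})$, and the constraint $\lambda<B\log_2(1+\frac{1}{1+N(N-1)/(2a^\alpha)})$ guarantees this for every $s\ge a$, since $\xi$ decreases in $s$. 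The previous lemma then gives the required $N$.

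The main obstacle is obtaining $\eta$ uniformly in $\lambda$. As $\lambda$ approaches the threshold, $x^*\to1$ and the conditional mean blows up, so a uniform bound fails near the edge. I will first try to exploit the large-$N$ regime. There the threshold is of order $\beta/\xi$ and the linearized denominator $\beta-\lambda(1+\xi)$ is positive precisely when $\lambda<\beta/(1+\xi)$, which is essentially the stability threshold. So both the exact and linearized quantities diverge together, and I would compare them through the relative error $\phi(x^*)/\phi(\tilde x)-1$ rather than the absolute error. Failing that, I would restrict to $\lambda$ at a fixed distance from the threshold, which is presumably the intended reading of ``$+\epsilon$''.
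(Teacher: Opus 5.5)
Your route is the paper's route. Its proof is the single line ``use Lemma 4.8 and $\mathbb{E}^{MF}[q\mid S=s]=x^*(s)/(1-x^*(s))$'', and your computation $\phi\bigl(\lambda/(\beta-\lambda\xi)\bigr)=\lambda/(\beta-\lambda(1+\xi))$ is what that line produces. You are right that it silently needs uniform control of $\phi'=(1-x)^{-2}$. But the problem goes deeper, and neither of your repairs closes it.

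Set $u=\lambda\xi/\beta$ and $y=x\xi$. Multiplying (8) by $x\xi/\beta$ turns it exactly into $u=h(y):=y\ln\bigl(1+\frac{1}{1+y}\bigr)$. The linearized value $\tilde x=\lambda/(\beta-\lambda\xi)$ equals $\tilde y/\xi$ with $\tilde y=u/(1-u)$, which is the solution of $u=y/(1+y)$. Hence
\[
\frac{x^*}{\tilde x}=\frac{(1-u)\,h^{-1}(u)}{u}.
\]
This ratio depends on $(\lambda,N,s)$ only through $u$. It tends to $1/\ln 2\approx 1.44$ as $u\to 0$ and to $3/2$ as $u\to 1$, so it is never close to $1$. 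The first-order expansion of $\ln(1+z)$ with $z=1/(1+x^*\xi)$ presumes $z$ is small, but $x^*\xi=h^{-1}(u)$ does not grow with $N$. So the uniform $\epsilon'$ you import from Lemma 4.8 does not exist. Wherever $x^*$ is of order one, the error in $x^*$ is of order one: for instance $x^*=1/2$ gives $\tilde x\approx 1/3$ for large $N$, so $\phi(x^*)=1$ against $\phi(\tilde x)\approx 1/2$. A lower bound $1-x^*\ge\eta$ therefore does not help.

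The relative-error fallback also fails, because the two sides do not blow up together. The pole of $\lambda/(\beta-\lambda(1+\xi))$ is at $\lambda=\beta/(1+\xi)$, strictly above the true threshold $\beta\ln\bigl(1+\frac{1}{1+\xi}\bigr)$. On the $u$-scale the gap ($1-u\approx 1/\xi$ versus $1-u\approx 3/(2\xi)$) is of the same order as the whole window in which $x^*$ is of order one. Concretely, let $\lambda_c(a):=B\log_2\bigl(1+\frac{1}{1+N(N-1)/(2a^\alpha)}\bigr)$ be the upper end of the stated range. At $s=a$ and $\lambda\uparrow\lambda_c(a)$, one has $\mathbb{E}^{MF}[q\mid S=a]\to\infty$, while the formula stays bounded (close to $2$ for large $N$). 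So for every $N$ both the absolute and the relative error are unbounded. Read uniformly over the stated range of $\lambda$ with $s=a$ allowed, the statement is false, and the paper's one-line proof has the same hole.

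What your argument does prove is a restricted version. If $\lambda\le(1-\delta)\lambda_c(a)$, or if $s$ is confined to $[a',b]$ with $a'>a$, then $u\le u_0<1$ uniformly in $N$. Consequently $x^*(s)\le h^{-1}(u_0)/\xi$ and $\tilde x\le u_0/((1-u_0)\xi)$ are both $O(N^{-2})$, $\phi'\le 4$ on the relevant segment for $N$ large, and the difference tends to $0$. This holds only because both sides vanish: their ratio stays close to $(1-u)h^{-1}(u)/u$, i.e.\ about $1.44$ to $1.5$. You should state the lemma in this restricted form rather than try to rescue uniformity.
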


\begin{proof}
    Use Lemma 4.8 and that $\mathbb{E}^{MF}[q|S=s]=\frac{x^*(s)}{1-x^*(s)}$.
\end{proof}

\begin{proof}[Proof of Proposition 3.29]
    Use the previous lemma and apply the transfer theorem and the change of variable $u=\frac{J}{s^\alpha}. $
\end{proof}
\section{Extensions, variants, and open problems}\label{sec:extensions}

\subsection{Possible generalizations}

As discussed in the introduction, the model considered in this paper is deliberately idealized and relies on several assumptions that are unlikely to hold exactly in practical systems. A first natural direction is to relax the assumption that transmitters are equally spaced. In this case, the queues are no longer statistically identical, which considerably complicates the analysis. At the same time, this setting reveals a richer behaviour, including the possible coexistence of stable and unstable queues. In forthcoming work, we will investigate this phenomenon through the order of explosion and the characterization of the spatial regions in which queues become unstable.

In this more general framework, the mean-field approximation leads to a fixed-point equation in $\mathbb{R}^N$, rather than to a scalar equation. It is also natural to question the assumption of homogeneous arrival rates across transmitters, especially in cellular networks. Since cells may have different sizes, the arrival rate should in general depend, at least partially, on the size of the corresponding cell. This heterogeneous setting will also be addressed in subsequent work.

A further possible approximation consists in retaining only nearest-neighbour interactions and neglecting longer-range interference terms. Such a local-interaction model may provide a tractable intermediate regime between the full interacting system and the mean-field approximations studied here. This direction will be investigated in future work.

A promising direction for future work is to study the impact of decoding interference through successive interference cancellation (SIC), rather than treating it as noise. This could potentially lead to a larger achievable capacity region and provide stronger evidence in favor of full-spectrum sharing strategies. This topic is currently under investigation.

Finally, another important extension is the incorporation of feedback mechanisms. In particular, one may ask whether some system parameters can be dynamically controlled in order to improve performance, for instance by reducing delay, congestion, or instability. This control-oriented perspective is currently under investigation.

\subsection{Open problems}

Several questions remain open and deserve further study:
\begin{itemize}[leftmargin=2em]
    \item Can one derive quantitative error bounds between the true system and the proposed mean-field approximations?
    
    \item Can uniqueness be proved for the fixed-point equation (9)?
    
    \item Can one exhibit a limiting system from which the second mean-field approximation arises rigorously?

    \item Can one manage to study the long time behavior of the first Mean-field approximation ? 
\end{itemize}

\appendix
\section{Appendix}\label{sec:appendix}

This appendix contain the technical results. 

\begin{lemma}
    There exist a path wise unique solution to (3)
\end{lemma}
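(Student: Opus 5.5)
The plan is to construct the solution of (3) pathwise, jump by jump, using the fact that all jump intensities are bounded. The arrival processes $A_i$ have rate $\lambda$, so the total arrival intensity is $N\lambda$. Every service rate satisfies $\mu_i(\cdot)\le \mu^{**}=B$.

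First restrict each departure measure $D_i$ to the strip $\{u\le B\}\times\mathbb R_+$. Points with $u>B$ never satisfy $u\le \mu_i(\widetilde Q(z^-))$, so they play no role in (3). The restricted measures are Poisson processes in time with finite intensity $B$ per unit time. Hence the superposition of all atoms of $A_1,\dots,A_N$ and of the restricted $D_1,\dots,D_N$ is, almost surely, a locally finite set of times $0<T_1<T_2<\cdots$ with $T_n\to\infty$. Almost surely no two atoms share a time coordinate, each $T_n$ carries exactly one mark (either an arrival at some $i$, or a potential departure at some $i$ with height $u_n\le B$), and no atom sits at time $0$.

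Next define $\widetilde Q$ recursively. Set $\widetilde Q(t)=\widetilde Q(0)$ on $[0,T_1)$. Given $\widetilde Q$ on $[0,T_n)$, its left limit $\widetilde Q(T_n^-)$ is known. If $T_n$ is an arrival at $i$, set $\widetilde Q(T_n)=\widetilde Q(T_n^-)+e_i$. If $T_n$ is a departure atom $(u_n,T_n)$ of $D_i$, set $\widetilde Q(T_n)=\widetilde Q(T_n^-)-e_i$ when $\widetilde q_i(T_n^-)>0$ and $u_n\le\mu_i(\widetilde Q(T_n^-))$, and leave it unchanged otherwise. Then keep $\widetilde Q$ constant on $[T_n,T_{n+1})$. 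Since $T_n\to\infty$, this defines a càdlàg $\mathbb N^N$-valued process on all of $\mathbb R_+$. It is adapted to the filtration generated by the initial condition and the Poisson measures, because each step uses only the data up to time $T_n$. By construction it satisfies (3) at every time, so it is a strong solution. It also stays in $\mathbb N^N$, since departures are only accepted when $\widetilde q_i(T_n^-)>0$.

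For pathwise uniqueness, let $Q'$ be another càdlàg adapted solution of (3) driven by the same data. Its trajectory can only jump at atoms of the driving measures (restricted as above). By (3) it is constant between consecutive $T_n$. Induction on $n$ then gives the result: if $Q'=\widetilde Q$ on $[0,T_n)$, then the left limits at $T_n$ agree, and (3) forces the jump at $T_n$ to be identical. So $Q'=\widetilde Q$ on $[0,T_{n+1})$, and hence on $\mathbb R_+$ almost surely.

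The only genuinely delicate point is non-accumulation of jump times, which here is immediate from the bounded rates $\lambda$ and $B$. Two smaller checks remain. The integrand in (3) must be predictable, which holds because it depends on $\widetilde Q(z^-)$. The existence of the law-equivalence with the Markov description is not part of this lemma and is deferred to the martingale-problem argument.
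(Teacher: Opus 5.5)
Your argument is correct, but it takes a different route from the paper's. The paper proves this lemma in one line: it appeals to a general existence-and-uniqueness theorem for SDEs driven by Poisson random measures (Theorem 117 in Situ's monograph), justified only by the remark that $\mu$ is bounded and $\lambda$ is constant. It leaves implicit how the discontinuous coefficient $\mathbf 1_{\{q_i>0\}}\mathbf 1_{\{u\le \mu_i(q)\}}$ meets the hypotheses of that theorem.

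You instead give a self-contained thinning construction. Since $\mu_i\le B$, truncating each $D_i$ to the strip $\{u\le B\}$ discards nothing, the relevant driving noise becomes a locally finite set of marked times $T_n\to\infty$, and existence and uniqueness both follow by induction over these times.

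Your route has three advantages:
\begin{itemize}
\item It needs only boundedness and measurability of the rates, with no regularity of $g$ at all.
\item It gives uniqueness $\omega$ by $\omega$ for any càdlàg process satisfying (3).
\item It makes explicit the jump-by-jump structure that the paper later uses without comment. Both the identification $Q=X$ on $[0,\tau_\partial)$ in the proof of Theorem 3.1 and the induction over jump times in Lemma A.3 are instances of exactly your induction.
\end{itemize}

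In exchange, the general theorem would extend to unbounded intensities, where the strip truncation is not available. That generality is not needed here.
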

\begin{proof}
    Since $\mu$ is bounded and $\lambda$ is constant conditions of Theorem 117 of \cite{situ2005jumps} applies. 
\end{proof}
\begin{proposition}
    Let $Q$ be defined as in (1) and $\Tilde{Q}$ defined as in (3).
    If $Q$ and $\Tilde{Q}$ have the same law for the initial condition then 
   $Q \overset{d}{=} \Tilde{Q}$
\end{proposition}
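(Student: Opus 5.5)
The plan is to go through the martingale problem: show that both $Q$ and $\widetilde Q$ solve the martingale problem associated with the generator $\mathcal L$ of (2), and conclude equality in law from uniqueness of that martingale problem for a non-explosive jump process with bounded rates.

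\emph{Step 1: $\widetilde Q$ solves the martingale problem.} By the previous lemma, (3) has a pathwise unique solution $\widetilde Q$, adapted to the filtration generated by the initial condition and the Poisson measures $(A_i,D_i)$. Fix a bounded $f:\mathbb N^N\to\mathbb R$. Since $\widetilde Q$ is piecewise constant with jumps only at atoms of the $A_i$ and $D_i$, we can write
\[
f(\widetilde Q(t))-f(\widetilde Q(0))
\]
as a sum over jumps. Each arrival atom of $A_i$ contributes $f(\widetilde Q(z^-)+e_i)-f(\widetilde Q(z^-))$. Each atom $(u,z)$ of $D_i$ with $u\le \mu_i(\widetilde Q(z^-))$ and $\widetilde q_i(z^-)>0$ contributes $f(\widetilde Q(z^-)-e_i)-f(\widetilde Q(z^-))$. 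We then compensate each Poisson integral, replacing $A_i(\mathrm dz)$ by $\lambda\,\mathrm dz$ and $D_i(\mathrm du,\mathrm dz)$ by $\mathrm du\,\mathrm dz$. The integrands are predictable and bounded by $2\|f\|_\infty$. The thinning sets have $u$-measure $\mu_i\le \mu^{**}$. Hence the compensated integrals are true martingales on each $[0,T]$. Integrating out $u$ produces exactly $\int_0^t\mathcal L f(\widetilde Q(z))\,\mathrm dz$. So $f(\widetilde Q(t))-f(\widetilde Q(0))-\int_0^t \mathcal Lf(\widetilde Q(z))\,\mathrm dz$ is a martingale, and $\widetilde Q$ solves the $(\mathcal L,\text{law of }Q(0))$ martingale problem.

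\emph{Step 2: $Q$ solves the same martingale problem.} This is standard for a continuous-time Markov chain with $Q$-matrix $\mathcal L$ that is non-explosive. The total jump rate at any state is bounded by $N(\lambda+\mu^{**})$, which gives non-explosion and Dynkin's formula for bounded $f$.

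\emph{Step 3: uniqueness, the main obstacle.} The total jump rate is bounded uniformly by $c:=N(\lambda+\mu^{**})$, so $\mathcal L$ is a bounded operator on bounded functions. I would invoke the classical result (Ethier--Kurtz, Ch.~4) that for bounded generators the martingale problem is well posed. Concretely, taking $f=\mathbf 1_{\{y\}}$, the one-dimensional marginals $p_t(y)=\mathbb P(X(t)=y)$ of any solution satisfy the Kolmogorov forward equation
\[
\tfrac{\mathrm d}{\mathrm dt}p_t=p_t\mathcal L
\]
in $\ell^1$. Because $\mathcal L$ is bounded on $\ell^1$, this equation has the unique solution $p_0e^{t\mathcal L}$. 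Uniqueness of one-dimensional marginals for every initial law then yields uniqueness of all finite-dimensional distributions through the usual conditioning argument (Ethier--Kurtz Thm.~4.4.2). Both processes have càdlàg paths, so their laws coincide on the Skorokhod space, giving $Q\overset{d}{=}\widetilde Q$.

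The delicate point is checking that the compensated stochastic integrals in Step~1 are genuine martingales rather than only local ones. The uniform bounds $\|f\|_\infty<\infty$ and $\mu_i\le\mu^{**}$ give integrability of the compensators on finite horizons, which settles it.
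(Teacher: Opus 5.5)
Your proposal is correct and takes the same route as the paper: show that the solution of (3) solves the martingale problem for $(\mathcal L,\mathcal B_b)$ by compensating the Poisson integrals (these are true martingales because the integrands are bounded and $\mu_i\le\mu^{**}$), then conclude from well-posedness of that martingale problem under uniformly bounded jump rates (Ethier--Kurtz). Your Steps~2 and~3 (the chain defined by the generator solves the same martingale problem; uniqueness of one-dimensional marginals via the forward equation in $\ell^1$, then Ethier--Kurtz Thm.~4.4.2) simply spell out what the paper takes for granted when it cites well-posedness.
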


\begin{proof}
Since the jump rates are uniformly bounded, the martingale problem associated with
\((\mathcal L,\mathcal B_b)\) is well posed; see \cite{ETHIER1986}. It is therefore
enough to show that the process defined by the stochastic equation solves this
martingale problem.

Let $h$ be a bounded function. By the jump representation of \(Q\),
\[
\begin{aligned}
h(Q(t))-h(Q(0))
&=
\sum_{i=1}^N
\int_0^t\int_0^\infty
\Delta_i^+ h(Q(s-))
\mathbf 1_{\{u\leq \lambda\}}\,
M_i^a(\mathrm du,\mathrm ds)
\\
&\quad+
\sum_{i=1}^N
\int_0^t\int_0^\infty
\Delta_i^- h(Q(s-))
\mathbf 1_{\{Q_i(s-)>0\}}
\mathbf 1_{\{u\leq \mu_i(Q(s-))\}}\,
M_i^d(\mathrm du,\mathrm ds),
\end{aligned}
\]
where
\[
\Delta_i^+ h(q):=h(q+e_i)-h(q),
\qquad
\Delta_i^- h(q):=h(q-e_i)-h(q).
\]
Adding and subtracting the compensators gives
\[
h(Q(t))-h(Q(0))
=
M_t^h+\int_0^t \mathcal Lh(Q(s))\,\mathrm ds,
\]
where
\[
\begin{aligned}
M_t^h
&=
\sum_{i=1}^N
\int_0^t\int_0^\infty
\Delta_i^+ h(Q(s-))
\mathbf 1_{\{u\leq \lambda\}}\,
\widetilde M_i^a(\mathrm du,\mathrm ds)
\\
&\quad+
\sum_{i=1}^N
\int_0^t\int_0^\infty
\Delta_i^- h(Q(s-))
\mathbf 1_{\{Q_i(s-)>0\}}
\mathbf 1_{\{u\leq \mu_i(Q(s-))\}}\,
\widetilde M_i^d(\mathrm du,\mathrm ds),
\end{aligned}
\]
with
\[
\widetilde M_i^a(\mathrm du,\mathrm ds)
:=
M_i^a(\mathrm du,\mathrm ds)-\mathrm du\,\mathrm ds,
\qquad
\widetilde M_i^d(\mathrm du,\mathrm ds)
:=
M_i^d(\mathrm du,\mathrm ds)-\mathrm du\,\mathrm ds.
\]
Since \(Q(s)=Q(s-)\) for Lebesgue-a.e. \(s\), the drift term is
\[
\int_0^t \mathcal Lh(Q(s))\,\mathrm ds,
\]
where
\[
\mathcal Lh(q)
=
\sum_{i=1}^N
\lambda \Delta_i^+h(q)
+
\sum_{i=1}^N
\mu_i(q)\mathbf 1_{\{q_i>0\}}\Delta_i^-h(q).
\]
Since \(h\) is bounded and the intensities are uniformly bounded, the integrands are bounded and predictable and the above
stochastic integrals are martingales. Thus, for every
\(h\in\mathcal B_b(\mathbb N^N)\),
\[
h(Q(t))-h(Q(0))-\int_0^t\mathcal Lh(Q(s))\,\mathrm ds
\]
is a martingale. Hence \(Q\) solves the martingale problem for \(\mathcal L\), and by
well-posedness it is a Markov process with generator \(\mathcal L\).
\end{proof}

\begin{lemma}[Monotonicity]
The Markov chain $Q$ is monotone with respect to the arrival rate, the initial condition, and the departure rate. More precisely:
\begin{itemize}
    \item \textbf{Monotonicity in the arrival rate.} If $\lambda_1 < \lambda_2$, then the chains $Q^{(1)}$ and $Q^{(2)}$ defined by \emph{(2)} with arrival rates $\lambda_1$, $\lambda_2$ and $Q^{(1)}(0)=Q^{(2)}(0)$ satisfy
    \[
    Q^{(1)} \leq_{\mathrm{st}} Q^{(2)}.
    \]

    \item \textbf{Monotonicity in the initial condition.} If $x \leq y$, then the chains $Q^x$ and $Q^y$ defined by \emph{(2)} with initial conditions $Q^x(0)=x$ and $Q^y(0)=y$ satisfy
    \[
    Q^x \leq_{\mathrm{st}} Q^y.
    \]

    \item \textbf{Monotonicity in the departure rate.} If for all $i \in [1;N]$, $f \leq \mu_i$ (resp. $f \geq \mu_i$), and $Q^f(0)=Q(0)$, then the chain $Q^f$ defined by \emph{(2)} with departure rate $\mu_i = f$ for every $i$ satisfies
    \[
    Q \leq_{\mathrm{st}} Q^f
    \qquad (\text{resp. } Q^f \leq_{\mathrm{st}} Q).
    \]
\end{itemize}
\end{lemma}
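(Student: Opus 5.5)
We prove all three monotonicity statements with one synchronous coupling built from the SDE representation \eqref{eq:sde}. By the Appendix proposition, this representation has the same law as the Markov chain defined by the generator. We realise both chains on one probability space with common departure measures $D_i$, and compare them pathwise. Pathwise ordering $Q^{(1)}(t)\le Q^{(2)}(t)$ for all $t$ a.s. gives $\le_{\mathrm{st}}$ at the process level.

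For the arrival rate, we take a Poisson random measure $\mathcal A_i$ on $\mathbb R_+\times\mathbb R_+$ with intensity $\mathrm du\,\mathrm dz$. We define the arrivals of chain $k$ as the points with $u\le\lambda_k$, so every arrival of $Q^{(1)}$ is also an arrival of $Q^{(2)}$. For the departure rate, we use the same $D_i$ with thresholds $\mu_i(Q(z^-))$ and $f$. For the initial condition, all noises are common and only the starting points differ.

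The core step is to show the ordering is preserved at every jump, by induction over the a.s. locally finite jump times. Suppose $x\le y$ just before a jump. An arrival at coordinate $i$ in the larger chain alone, or in both chains, keeps $x\le y$. Now take a departure mark $(u,z)$ at coordinate $i$.

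\emph{Case $x_i<y_i$.} After any move we have $x_i-1\le x_i\le y_i-1$, so the ordering survives whichever chain accepts the mark.

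\emph{Case $x_i=y_i>0$.} The danger is that $y$ loses a job while $x$ does not. We need $\mu_i^{(x)}\ge\mu_i^{(y)}$ at that moment, so that $u\le\mu_i^{(y)}$ implies $u\le\mu_i^{(x)}$. In the initial-condition and arrival-rate cases this holds because $\mu_i(q)=g\bigl(\sum_{j\neq i}\mathbf 1_{\{q_j>0\}}l(d(i,j))\bigr)$ with $g$ nonincreasing. Indeed, $x\le y$ gives $\mathbf 1_{\{x_j>0\}}\le\mathbf 1_{\{y_j>0\}}$, hence a smaller interference, hence a larger rate for the smaller chain. In the departure-rate case with $f\le\mu_i$, the smaller chain is $Q$ and the larger is $Q^f$, so the required inequality $\mu_i(Q)\ge f$ holds by assumption. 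The case $f\ge\mu_i$ is symmetric.

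\emph{Case $x_i=y_i=0$.} No departure occurs in either chain.

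The main obstacle is the equal-coordinate case, where the interaction matters. It is exactly the attractiveness property: the empty-neighbour structure makes less congested configurations serve faster. One must also check that no simultaneous jumps occur, which holds a.s. because the driving Poisson measures are independent and diffuse in time. Non-explosion, from bounded rates, then lets the induction cover all of $[0,\infty)$.

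One caveat concerns the departure-rate item. If $f$ is state-dependent, say $f$ is evaluated on $Q^f$, we need the comparison $\mu_i(Q)\ge f(Q^f)$ at the relevant times. For the use made in Lemma 4.1 (constants $\mu^*\le\mu_i\le\mu^{**}$) this holds trivially.
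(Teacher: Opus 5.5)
Your proof is correct and follows the paper's argument: you use the same synchronous coupling through common Poisson measures (with marked arrivals for the $\lambda$-comparison) and the same induction over jump times, with the critical equal-coordinate case settled by the monotonicity of $g$ in the interference, and you additionally write out the initial-condition and departure-rate cases that the paper only says follow ``exactly the same lines''. Your closing caveat is also unnecessary: if $f\le\mu_i$ holds pointwise in the state, then on $\{Q\le Q^f\}$ one has $\mu_i(Q)\ge\mu_i(Q^f)\ge f(Q^f)$, and symmetrically in the reverse case $\mu_i(Q)\le\mu_i(Q^f)\le f(Q^f)$ on $\{Q^f\le Q\}$, so state-dependent $f$ is covered as well.
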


\begin{proof}
Our argument relies on Strassen's theorem (see Theorem 4.2.1 in \cite{baccelli1994elements}), which reduces the proof of a stochastic domination to the construction of a probability space on which the two processes can be ordered almost surely.

Let $\lambda_1 \leq \lambda_2$ and let both processes start from the same
initial condition $x \in \mathbb N^N$. We construct $Q^{\lambda_1}$ and
$Q^{\lambda_2}$ on the same probability space, using the same Poisson
measures $(A_i)$ and $(D_i)$. For $k=1,2$, the process $Q^{\lambda_k}$ is
defined by
\begin{align*}
q_i^{\lambda_k}(t)
&=
x_i
+ \int_0^t \int_{\mathbb R_+}
\mathbf 1_{\{z \leq \lambda_k\}}
\, A_i(\mathrm dz,\mathrm ds)
\\
&\quad
- \int_0^t \int_{\mathbb R_+}
\mathbf 1_{\{q_i^{\lambda_k}(s^-)>0\}}
\mathbf 1_{\{u \leq \mu_i(Q^{\lambda_k}(s^-))\}}
\, D_i(\mathrm du,\mathrm ds),
\qquad i=1,\dots,N .
\end{align*}
By Proposition 2.1, these equations define a valid coupling of $Q^{\lambda_1}$ and $Q^{\lambda_2}$.

Since $\lambda_1 \leq \lambda_2$, every arrival accepted by
$Q^{\lambda_1}$ is also accepted by $Q^{\lambda_2}$. We show that
\[
Q^{\lambda_1}(t) \leq Q^{\lambda_2}(t),
\qquad t \geq 0,
\]
almost surely.

The coupled system has locally finitely many jumps on compact time
intervals. The order holds at time $0$. Assume that it holds on $[0,T)$,
where $T$ is a jump time, and write
\[
Q^{\lambda_1}(T^-) \leq Q^{\lambda_2}(T^-).
\]
We check that the order is preserved at time $T$.

If the jump is an arrival, then either both processes receive the arrival,
or only $Q^{\lambda_2}$ receives it. In both cases the order is preserved.

Now consider a departure attempt on coordinate $i$. The order can only be
violated if
\[
q_i^{\lambda_1}(T^-)=q_i^{\lambda_2}(T^-)
\]
and if the departure occurs in $Q^{\lambda_2}$ but not in
$Q^{\lambda_1}$. If this common value is zero, no departure can occur in
either process because of the factor
\[
\mathbf 1_{\{q_i^{\lambda_k}(T^-)>0\}}.
\]
If the common value is positive, then the departure indicator due to
non-emptiness is equal to one for both processes. Moreover, since
\[
Q^{\lambda_1}(T^-) \leq Q^{\lambda_2}(T^-)
\]
and since $g$ is decreasing, we have
\[
\mu_i(Q^{\lambda_2}(T^-))
\leq
\mu_i(Q^{\lambda_1}(T^-)).
\]
Therefore, any atom satisfying
\[
u \leq \mu_i(Q^{\lambda_2}(T^-))
\]
also satisfies
\[
u \leq \mu_i(Q^{\lambda_1}(T^-)).
\]
Thus a departure of $Q^{\lambda_2}$ on coordinate $i$ necessarily implies
a departure of $Q^{\lambda_1}$ on the same coordinate. Hence the order is
preserved.

By induction over the jump times, we obtain
\[
Q^{\lambda_1}(t) \leq Q^{\lambda_2}(t),
\qquad t \geq 0,
\]
almost surely. This gives
\[
Q^{\lambda_1} \leq_{\mathrm{st}} Q^{\lambda_2}.
\]

The arguments for ordered initial conditions and ordered departure rates follow exactly the same lines
\end{proof}

\begin{proof}[Proof of Lemma 3.15]
 We study the function 
\[
f : x \mapsto \frac{\lambda}{x} - B\log_2\left(1+\frac{1}{1+x\xi_N}\right)
\]
on \([0,1]\). Its derivative is given by
\[
f'(x)=\frac{A(x)}{x^2(1+x\xi_N)(2+x\xi_N)},
\]
with
\[
A(x):=\xi_N(\frac{B}{ln(2)}-\lambda\xi_N)x^2 - 3\lambda\xi_N x - 2\lambda.
\]
Note that \(f(0)=+\infty\) and \(f(1)=\lambda-\mu^*\)<0 (since \(\lambda \in [0,\mu^*)\)). Because \(f\) is continuous, the intermediate value theorem guarantees the existence of a solution. For uniqueness, observe that, on the domain where \(f'\) is defined, its sign coincides with that of \(A\). The zeros of \(A\) are determined by the discriminant 
\[
\Delta=(3\lambda\xi_N)^2+8\lambda\xi_N(\frac{B}{ln(2)}-\lambda\xi_N),
\]
which is positive since \(\lambda < \mu^*\) and \(\log(1+x)<x\) for \(x>0\). Moreover, the first root of \(A\) is 
\[
x_1:=\frac{3\lambda\xi_N-\sqrt{\Delta}}{2\xi_N(\frac{B}{ln(2)}-\lambda\xi_N)}<0,
\]
thus \(f\) can cross the axis only once.

\end{proof}

\begin{proof}[Proof of Lemma 4.6]
For the full-buffer process, the generator is given by
\[
(\mathcal L^{FB} f)(x)
=
\sum_{i=1}^N
\left[
\lambda \big(f(x+e_i)-f(x)\big)
+
\mu^* \mathbf 1_{\{x_i>0\}}
\big(f(x-e_i)-f(x)\big)
\right].
\]

Let \(r>1\) and define
\[
V(x)=\sum_{i=1}^N r^{x_i}.
\]
For the original process, we have
\[
\mathcal{L}V(x)
=
\sum_{i=1}^N r^{x_i}
\left[
\lambda(r-1)
+
\mu_i(x)(r^{-1}-1)\mathbf 1_{\{x_i>0\}}
\right].
\]
Since \(r>1\), we have \(r^{-1}-1<0\).
Therefore,
\[
\mu_i(x)(r^{-1}-1)\mathbf 1_{\{x_i>0\}}
\leq
\mu^*(r^{-1}-1)\mathbf 1_{\{x_i>0\}},
\]
and hence
\[
\mathcal{L}V(x)\leq \mathcal{L}^{FB}V(x).
\]

Then: 
\begin{itemize}
\item if $x_i\ge 1$,
\[
\mathcal L^{FB}\big(r^{x_i}\big)(x)
=
\lambda\big(r^{x_i+1}-r^{x_i}\big)
+
\mu^*\big(r^{x_i-1}-r^{x_i}\big)
=
r^{x_i}\Big(\lambda(r-1)+\mu^*(r^{-1}-1)\Big);
\]
\item if $x_i=0$,
\[
\mathcal L^{FB}\big(r^{x_i}\big)(x)
=
\lambda(r^1-r^0)
=
\lambda(r-1).
\]

\[
a(r)
:=
\lambda(r-1)+\mu^*(r^{-1}-1)
=
\lambda r + \frac{\mu^*}{r} - (\lambda+\mu^*).
\]
Because $\lambda<\mu^*$, we can choose $r>1$ such that
 $a(r)<0$.
Indeed
\[
a(1)=0,
\qquad
a'(r)=\lambda-\frac{\mu^*}{r^2}
\ \Rightarrow\
a'(1)=\lambda-\mu^*<0.
\]
Hence,
\[
\mathcal L^{FB} V(x)
=
a(r)\sum_{i:x_i>0} r^{x_i}
+
\lambda(r-1)\,\#\{i:x_i=0\}.
\]

Define
\[
n_0(x):=\#\{i:x_i=0\}\le N.
\]
We have
\[
\sum_{i:x_i>0} r^{x_i} = V(x)-n_0(x),
\]
Hence 
\begin{align}
    \mathcal L V(x)
&=
a(r)\big(V(x)-n_0(x)\big)+\lambda(r-1)n_0(x) \nonumber\\
&=
a(r)V(x)+\big(\lambda(r-1)-a(r)\big)n_0(x).
\end{align}

Therefore
\[
\mathcal L^{FB} V(x)
\le
a(r)V(x) + B,
\qquad
B:=N\big(\lambda(r-1)-a(r)\big)<\infty,
\]
with $a(r)<0$.
Define
\[
C:=\{x\in\mathbb{Z}_+^N: V(x)\le M\},
\qquad
M:=\frac{2B}{|a(r)|}.
\]
Then for all $x\notin C$ (i.e. $V(x)>M$),
\begin{align}
\mathcal L^{FB} V(x)
&\le
a(r)V(x)+B \nonumber\\
&=
-|a(r)|\,V(x)+B.\\
&\le
-|a(r)|\,V(x)+\frac{|a(r)|}{2}V(x) \nonumber\\
&=
-\frac{|a(r)|}{2}V(x).
\end{align}
Hence the inequality:
\[
\mathcal L^{FB} V(x)
\le
-c\,V(x)+b\,\mathbf 1_C(x),
\qquad
c:=\frac{|a(r)|}{2},
\qquad
b:=B+cM.
\]

\end{itemize}
\end{proof}

The Jupyter notebook used for the simulations is available in the
\href{https://github.com/Philippe-Sarotte/Interference-Queueing-Networks-A-Replica-Mean-Field-Approach-in-the-Symmetric-Setting}{associated GitHub repository} (https://github.com/Philippe-Sarotte/Interference-Queueing-Networks-A-Replica-Mean-Field-Approach-in-the-Symmetric-Setting).

\bibliographystyle{IEEEtran}
\bibliography{biblio}

\end{document}